\newcommand\myhdots{\hbox to 1.5em{.\hss.\hss.}}
\DeclareFontFamily{U}{wncy}{}
    \DeclareFontShape{U}{wncy}{m}{n}{<->wncyr10}{}
    \DeclareSymbolFont{mcy}{U}{wncy}{m}{n}
    \DeclareMathSymbol{\Sh}{\mathord}{mcy}{"58} 
\newtheorem*{thm*}{Theorem}
\newtheorem*{conj*}{Conjecture}
\newtheorem{theorem}{Theorem}[section]
\newtheorem{lemma}[theorem]{Lemma}
\newtheorem{proposition}[theorem]{Proposition}
\newtheorem*{remark*}{Remark}
\newtheorem{corollary}[theorem]{Corollary}
\theoremstyle{definition}
\newtheorem{definition}[theorem]{Definition}
\newtheorem{example}[theorem]{Example}
\newtheorem*{question}{Question}
\newcommand{\ZZ}{\mathbb{Z}}
\newcommand{\NN}{\mathbb{N}}
\numberwithin{equation}{section}
\tikzset{vtx/.style={circle, fill, inner sep=1.5pt}}
\tikzset{openvtx/.style={circle, draw, inner sep=1.5pt}}
\title{On the number of generalized numerical semigroups}
\author{Sean Li}
\address{Department of Mathematics, Massachusetts Institute of Technology, Cambridge, MA 02139, USA}
\email{seanjli@mit.edu}
\renewcommand{\paragraph}[1]{%
  \par
  \addvspace{\medskipamount}
  \textit{#1\@addpunct{.}}\enspace\ignorespaces
}
\renewcommand{\subparagraph}[1]{%
  \par
  \addvspace{\medskipamount}

  \noindent
  \textbf{#1\@addpunct{.}}\enspace\ignorespaces
}
\date{\today}
\newcommand{\eps}{\varepsilon}
\newcommand{\vocab}{\emph}
\newcommand{\rr}{\mathsf{r}}
\newcommand{\cc}{\mathsf{c}}
\newcommand{\mm}{\mathcal{M}}
\newcommand{\bs}{\mathbf}
\begin{document}
\maketitle

\begin{abstract}
  Let $\mathsf{r}_k$ be the unique positive root of $x^k - (x+1)^{k-1} = 0$. We prove the best known bounds on the number $n_{g,d}$ of $d$-dimensional generalized numerical semigroups, in particular that
  \[n_{g,d} > C_d^{g^{(d-1)/d}} \rr_{2^d}^g\]
  for some constant $C_d > 0$, which can be made explicit. To do this, we extend the notion of multiplicity and depth to generalized numerical semigroups and show our lower bound is sharp for semigroups of depth 2. We also show other bounds on special classes of semigroups by introducing \emph{partition labelings}, which extend the notion of Kunz words to the general setting.
\end{abstract}

\section{Introduction}

Let $\NN_0$ denote the nonnegative integers. A \emph{numerical semigroup} $\Lambda$ is a subset of $\NN_0$ that has finite complement, contains 0, and is closed under addition. In other words, $\Lambda$ is a cofinite submonoid of $\NN_0$. Given a numerical semigroup $\Lambda$, one can define a number of invariants of $\Lambda$: its \vocab{genus} $g(\Lambda) := \# (\NN_0 \setminus \Lambda)$; its \vocab{multiplicity} $m(\Lambda) := \min \{ \Lambda \setminus \{0\} \}$; its \vocab{Frobenius number} $f(\Lambda) := \max \{ \NN_0 \setminus \Lambda \}$; and its \vocab{depth} $q(\Lambda) := \lceil (1+f(\Lambda))/m(\Lambda) \rceil$, the last of which was recently introduced by Eliahou and Fromentin~\cite{ef_2020}. We drop the $\Lambda$ when writing down these invariants if it is clear to which semigroup we are referring.

A great deal of research has been done regarding the enumeration of numerical semigroups after ordering by a specific invariant. Perhaps the most significant result is due to Zhai~\cite{zhai_2012}, who in 2011 showed that the number of numerical semigroups with genus $g$ is asymptotic to $C\left( \frac{1+\sqrt{5}}{2} \right)^g$ for some constant $C$, resolving a conjecture of Bras-Amor\'os~\cite{bras-amoros_2007}. The asymptotic number of numerical semigroups with fixed Frobenius number~\cite{backelin_1990}, multiplicity~\cite{kaplan_2011}, and recently depth with respect to genus~\cite{zhu_2022} and multiplicity~\cite{li_2022} are also known. A key ingredient in many of these proofs is that, after ordering, almost all numerical semigroups have small depth. For instance, Zhai showed that almost all numerical semigroups of genus $g$ have depth $2$ or $3$, verifying a conjecture of Zhao~\cite[Conj.~2]{zhao_2011}.

Naturally, one can ask similar questions about the analogous objects in $\NN_0^d$. A \vocab{generalized numerical semigroup} (or \vocab{GNS}) $\Lambda^d$ of dimension $d$ is a cofinite submonoid of $\NN_0^d$. In 2016, Failla, Peterson, and Utano~\cite{fpu_2017} initiated the formal study on the number $n_{g,d}$ of generalized numerical semigroups with genus $g$ and dimension $d$, showing that for fixed $d$ and large $g$ we have 
\[n_{g,d} \gtrsim \binom{g+d-1}{d-1} C^d \left( \frac{1+\sqrt{5}}{2} \right)^g\]
by estimating the number of generalized numerical semigroups that contain every point with positive integer coordinates. Their bound implies that $n_{g,d}^{1/g} \gtrsim \frac{1+\sqrt{5}}{2}$ for fixed $d$. To our knowledge, the above bound is the best known bound on $n_{g,d}$. Currently, the asymptotics of $n_{g,d}$ are wide open; even for $d=2$, we do not have a Bras-Amor\'os-like conjecture for the growth rate~\cite{kaplan_2017}. However, Cisto, Delgado, and Garc\'ia-S\'anchez~\cite{cdg_2021} have developed algorithms to compute $n_{g,d}$ for small values.

In this paper, we constructively show the following main result, which significantly improves the lower bound on $n_{g,d}$.

\begin{theorem}
  \label{thm:gns-record}
  Let $\rr_k$ be the unique positive root of $x^k - (x+1)^{k-1} = 0$. Then for each $d$, there is a constant $C_d > 0$ for which 
  \[n_{g,d} > C_d^{g^{(d-1)/d}} \rr_{2^d}^g.\]
\end{theorem}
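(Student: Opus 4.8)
The plan is to extend the multiplicity, Frobenius number, and depth to a GNS $\Lambda\subseteq\NN_0^d$ through the sum functional $\sigma(\bs a)=a_1+\cdots+a_d$: set $m(\Lambda)=\min\{\sigma(\bs a):\bs a\in\Lambda\setminus\{\bs 0\}\}$, $f(\Lambda)=\max\{\sigma(\bs h):\bs h\notin\Lambda\}$, and $q(\Lambda)=\ceil{(1+f)/m}$, mirroring the one-dimensional definitions and reducing to them when $d=1$. The crucial structural point is that $\sigma$ is additive, so for a depth-$2$ GNS every sum of two nonzero elements satisfies $\sigma\ge 2m$ and hence lands in $\{\sigma\ge 2m\}$, which is forced to lie in $\Lambda$. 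Consequently closure is automatic, and a depth-$2$ GNS of multiplicity $m$ is the same data as: all of $\{\bs a\neq\bs 0:\sigma(\bs a)<m\}$ excluded, all of $\{\sigma\ge 2m\}$ included, and an arbitrary choice of which elements of the band $B_m=\{\bs a:m\le\sigma(\bs a)<2m\}$ to include, subject only to hitting the layer $\sigma=m$. I would prove this equivalence as the first lemma.

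Next I would count. Writing $F(m)=\#\{\bs a\neq\bs 0:\sigma(\bs a)<m\}=\binom{m+d-1}{d}-1$ and $N(m)=\#B_m=\binom{2m+d-1}{d}-\binom{m+d-1}{d}$, the genus equals $F(m)+(N(m)-s)$ where $s$ is the number of included band elements, so the number of depth-$2$ GNS of genus $g$ and multiplicity $m$ is $\binom{N(m)}{g-F(m)}$ up to the lower-order normalization correction. Summing over $m$ and using $F(m)\sim m^d/d!$, $N(m)\sim(2^d-1)m^d/d!$ together with $\binom{ag}{bg}=\exp\!\big(g[a\ln a-b\ln b-(a-b)\ln(a-b)]+O(\log g)\big)$, the exponential growth rate is the maximum over $\beta=m^d/(d!\,g)$ of $\rho$, where $a=(2^d-1)\beta$ and $b=1-\beta$. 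With $K=2^d$ the stationarity condition rearranges to $a^{K-1}b=(a-b)^{K}$; putting $s:=b/(a-b)$ and $r:=a/(a-b)=s+1$ this is $s(s+1)^{K-1}=1$, and using $r-s-1=0$ to kill the $\ln(a-b)$ terms and then $\ln s=-(K-1)\ln(s+1)$ collapses $\ln\rho$ to $(K-1)\ln(s+1)=-\ln s$. Hence the optimal base $x=\rho$ satisfies $s=1/x$ and therefore $x^{K}=(x+1)^{K-1}$, identifying it as exactly $\rr_{2^d}$ and already giving $n_{g,d}>\rr_{2^d}^{\,g}/\mathrm{poly}(g)$.

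The remaining and hardest task is to upgrade the polynomial factor to $C_d^{\,g^{(d-1)/d}}$. A single band contributes only one binomial coefficient, hence only a polynomial correction, so the improvement must come from genuinely extra degrees of freedom. Here I would use a layer-by-layer (Kunz-type) partition labeling of depth-$2$ GNS, which records the band and translates the compatibility condition into local constraints. The point is that the band has a codimension-one boundary of size $\Theta(m^{d-1})=\Theta(g^{(d-1)/d})$ along which one can install $\Theta(g^{(d-1)/d})$ nearly independent binary choices, each altering the genus by $O(1)$; since their total genus cost is $o(g)$, they multiply the count by $2^{\Theta(g^{(d-1)/d})}=C_d^{\,g^{(d-1)/d}}$ while leaving the base $\rr_{2^d}$ untouched. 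Establishing that $\Theta(g^{(d-1)/d})$ such toggles are simultaneously valid and mutually independent — i.e. that the closure constraints genuinely decouple along the boundary — is the main obstacle, and is exactly what the labeling bookkeeping is designed to control; the accompanying saddle-point estimate then makes $C_d$ explicit, and the matching upper bound needed for the depth-$2$ sharpness claim follows from the same layer description.
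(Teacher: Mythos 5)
Your construction is exactly the paper's: your band $B_m=\{m\le\sigma<2m\}$ with everything below excluded and everything above included is the paper's family of depth-$2$ GNSs with multset $\mm_k=\{\sigma=k\}$ (\Cref{lem:lower-bound}), and your closure argument is the paper's sum-freeness of $\mathcal{R}_2$ (\Cref{prop:dumb-stuff}, \Cref{prop:ngd2-form}). Your stationarity algebra correctly identifies $\rr_{2^d}$ (the paper gets this instead from the generating function $\sum_s x^s(x+1)^{(2^d-1)s}$ and its characteristic polynomial). The genuine gap is the sentence ``already giving $n_{g,d}>\rr_{2^d}^g/\mathrm{poly}(g)$,'' and everything built on it. For $d\ge 2$ the lower-order terms you discard are not a polynomial correction: computing exactly, $N(m)=(2^d-1)F(m)-\Theta(m^{d-1})$, and at the optimal scale $m=\Theta(g^{1/d})$ this deficit of $\Theta(g^{(d-1)/d})$ in the top of the binomial multiplies $\binom{N(m)}{g-F(m)}$ by $c^{\Theta(g^{(d-1)/d})}$ for a constant $c<1$ bounded away from $1$ (each unit decrease of the top of $\binom{n}{k}$ costs a factor $(n-k)/n$, which at the saddle point equals $\frac{2^d\cc_{2^d-1}-1}{(2^d-1)\cc_{2^d-1}}<1$); there is a further, smaller loss because the integer $m$ cannot be chosen so that $F(m)$ hits the maximizer $r$ exactly. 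So the single-band maximum is $\rr_{2^d}^g e^{-\Theta(g^{(d-1)/d})}$, not $\rr_{2^d}^g/\mathrm{poly}(g)$; moreover this loss is unavoidable, since \Cref{lem:r2-bound} (via Loomis--Whitney) gives $s_2\le(2^d-1)s_1-2^{d-2}s_1^{(d-1)/d}$ for \emph{every} multset, and summing over all multsets gains at most $p_{d-1}(g+1)=e^{O(g^{(d-1)/d})}$ by \Cref{thm:abp}.

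This is why the theorem carries the factor $C_d^{g^{(d-1)/d}}$: in the paper's proof it is a \emph{loss} ($C_d<1$), not a gain to be engineered, so your step 3 attacks the wrong problem. The toggle/decoupling mechanism you propose --- which you yourself flag as the main unresolved obstacle --- is both unnecessary and unsubstantiated: toggles change the genus and must be recompensated inside the band, and whether $n_{g,d,2}$ actually exceeds $\rr_{2^d}^g$ is not proved in the paper (its numerics merely suggest it for $d=2$). The correct completion of your argument is a quantitative version of your step 2: compare $\binom{N(m)}{g-F(m)}$ with $G_{2^d-1}(x,g)=\binom{(2^d-1)x}{g+1-x}$ at its maximizer $r$, showing that the deficit $D(m)=\Theta(m^{d-1})$ and the misalignment $|F(m)-r|=O(m^{d-1})$ each cost only $e^{O(g^{(d-1)/d})}$; this is exactly what the paper's \Cref{lem:gk-converge}, \Cref{lem:bin-const}, and \Cref{lem:max-egf} accomplish, yielding the theorem with an explicit $C_d<1$. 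Finally, your closing claim that the matching depth-$2$ upper bound ``follows from the same layer description'' fails for the same reason: the upper bound must range over all multsets (all shapes), which is precisely where the Loomis--Whitney estimate and the $p_{d-1}(g+1)$ count of shapes enter in \Cref{cor:upper}.
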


In particular, we have that $n_{g,d}^{1/g} \gtrsim \rr_{2^d}$ for fixed $d$. Hence, \Cref{thm:gns-record} is stronger than the lower bound from Failla, Peterson, and Utano~for $d \geq 2$. This also implies that $\lim_{g\to\infty} (n_{g,1})^2/n_{g,2} = 0$, since $\left( \frac{1+\sqrt5}{2} \right)^2 < \rr_4$, negatively answering a question of Cisto, Delgado, and Garc\'ia-S\'anchez~\cite[$\S$8, pg.~16]{cdg_2021}. We furthermore establish the following superexponential upper bound on $n_{g,d}$.

\begin{theorem}
  \label{thm:bad-upper}
  For each $d$, we have $n_{g,d}^{1/g} < (2e+o(1)) (\ln g)^{d-1}$.
\end{theorem}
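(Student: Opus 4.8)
The plan is to bound $n_{g,d}$ by the number of possible \emph{gap sets}: a GNS is determined by its complement $H = \NN_0^d \setminus \Lambda$, and $|H| = g$, so it suffices to confine all gaps to a small explicit region of $\NN_0^d$ and then count $g$-element subsets of it. The crux is the following structural claim: if $h = (h_1,\dots,h_d)$ is a gap of a GNS of genus $g$, then
\[\prod_{i=1}^d (h_i + 1) \le 2g.\]

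To prove this I would fix a gap $h$ and examine the box $B_h = \{a \in \NN_0^d : 0 \le a \le h \text{ coordinatewise}\}$, which has exactly $\prod_{i=1}^d(h_i+1)$ points, together with the involution $\iota : a \mapsto h - a$ on $B_h$. Every orbit of $\iota$ contains a gap: for a two-element orbit $\{a, h-a\}$, if both $a, h-a \in \Lambda$ then $h = a + (h-a) \in \Lambda$, a contradiction; and the unique possible fixed point $a = h/2$ must itself be a gap, since otherwise $h = 2a \in \Lambda$. As $\iota$ has at least $\tfrac12\prod_{i=1}^d(h_i+1)$ orbits and distinct orbits are disjoint, the box $B_h$ contains at least $\tfrac12\prod_{i=1}^d(h_i+1)$ gaps, all of which are among the $g$ gaps of $\Lambda$. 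Rearranging yields the displayed inequality.

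Consequently every gap lies in the region $S_{g,d} := \{h \in \NN_0^d : \prod_{i=1}^d(h_i+1) \le 2g\}$, and since distinct GNS have distinct gap sets we get $n_{g,d} \le \binom{|S_{g,d}|}{g}$. Substituting $a_i = h_i + 1$ identifies $|S_{g,d}|$ with $\#\{(a_1,\dots,a_d)\in\ZZ^d : a_i \ge 1,\ \prod_i a_i \le 2g\} = \sum_{n \le 2g} \tau_d(n)$, where $\tau_d(n)$ denotes the $d$-fold divisor function. The generalized Dirichlet divisor estimate $\sum_{n \le x}\tau_d(n) \sim \frac{x(\ln x)^{d-1}}{(d-1)!}$ then gives $|S_{g,d}| \sim \frac{2g(\ln g)^{d-1}}{(d-1)!}$, and combining with $\binom{N}{g} \le (eN/g)^g$ produces
\[n_{g,d}^{1/g} \le \frac{e\,|S_{g,d}|}{g} = (1+o(1))\,\frac{2e(\ln g)^{d-1}}{(d-1)!} \le (2e + o(1))(\ln g)^{d-1},\]
which is the claimed bound (in fact slightly stronger, by the factor $(d-1)!$).

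The conceptual obstacle is locating the \emph{right} containment region. Bounding each coordinate separately only shows $h_i < 2g$, which traps the gaps in a box of volume $(2g)^d$ and gives the far weaker estimate $n_{g,d}^{1/g} = O(g^{d-1})$. The multiplicative constraint $\prod_{i=1}^d(h_i+1) \le 2g$, extracted from the involution argument, instead cuts the region down to a hyperbolic ``divisor'' shape, and it is exactly the $\sum_{n\le x}\tau_d(n)$ asymptotics that upgrades the saving from $g^{d-1}$ to $(\ln g)^{d-1}$. The only remaining technical care is the uniformity of the $o(1)$, i.e.\ absorbing the error term in the divisor sum and replacing $\ln 2g$ by $\ln g$, which is routine.
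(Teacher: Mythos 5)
Your proposal is correct and follows essentially the same route as the paper: the involution/pairing argument on the box below a gap is exactly the paper's Proposition~\ref{prop:up-bound} (producing the same region $\mathcal{A}_g = \{\bs{x} : \prod_i (x_i+1) \le 2g\}$), and the final count via $\binom{|\mathcal{A}_g|}{g}$ matches the paper's conclusion. The only difference is cosmetic: where the paper bounds $\#\mathcal{A}_g \le 2g H_{2g}^{d-1}$ by harmonic sums, you invoke the Piltz divisor asymptotic $\sum_{n \le x}\tau_d(n) \sim x(\ln x)^{d-1}/(d-1)!$, which sharpens the constant by a factor of $(d-1)!$ but does not change the argument.
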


Analyzing $n_{g,d}$ is difficult in part because numerical semigroup invariants are not easy to lift to the general case. For instance, Failla, Peterson, and Utano define the multiplicity and Frobenius gap of a generalized numerical semigroup, but they are not canonical; they depend on a choice of relaxed monomial order on $\NN_0^d$~\cite[Def.~3.5]{fpu_2017}. To this end, Cisto, Failla, Peterson, and Utano define the \vocab{Frobenius allowable gaps}~\cite{cisto_2019}, which Lin and Singhal~\cite{lin_2021} showed to be the set of maximal elements of $\NN_0^d \setminus \Lambda^d$ under the natural partial order and used them to count semigroups with a unique Frobenius allowable gap.

In light of this, a key ingredient of our paper is our extension of multiplicity and depth to the general setting. We define the \vocab{multset} to be the set of nonzero minimal elements of $\Lambda^d$ under the natural partial order. Then, we generalize the notion of depth to generalized numerical semigroups and show that~\Cref{thm:gns-record} is sharp for generalized numerical semigroups of depth 2. An advantage of our more flexible definition is that it does not rely on a choice of relaxed monomial order on $\NN_0^d$.

An important tool used in the study of numerical semigroups is the Kunz word, which is a word with integer entries satisfying certain linear inequalities that encodes a numerical semigroup. This correspondence allows us to turn the problem of enumerating numerical semigroups into a problem in polyhedral geometry~\cite{kaplan_2011, kaplan_2021} and additive combinatorics~\cite{li_2022, zhu_2022}. 

We lift the notion of Kunz words to generalized numerical semigroups by introducing \emph{partition labelings}, which are diagrams labeled with multi-dimensional partitions that satisfy certain additive inequalities that encode the data of a GNS. We use partition labelings to bound the number of semigroups with a fixed multset and the number of semigroups with a multset of minimal size, the latter of which we call \vocab{rectangular GNSs}. 

\subsection{Outline}

In \Cref{sec:prelims}, we establish conventions and review the relevant technical facts about $d$-dimensional partitions and the constants $\rr_k$ for our paper. Next, in \Cref{sec:invariants}, we define the multset, depth, and depth-$k$ regions of a GNS, which generalize numerical semigroup invariants. In \Cref{sec:bounds}, we prove~\Cref{thm:gns-record} and~\Cref{thm:bad-upper}. Then in \Cref{sec:part}, we define partition labelings and use them to bound the size of special classes of semigroups. Finally, we discuss open questions and future lines of work in \Cref{sec:future}.

\section*{Acknowledgements}

This work was done at the University of Minnesota Duluth with support from Jane Street Capital, the National Security Agency (grant number H98230-22-1-0015), and fully supported by the CYAN Undergraduate Mathematics Fund at MIT. The author especially thanks Joseph Gallian for his mentorship and for nurturing a wonderful environment for research. We are also grateful to Michael Ren, Amanda Burcroff, and Deepesh Singhal for detailed comments, and Jonas Iskander and Noah Kravitz for helpful discussions. Finally, we are thankful to Carmelo Cisto and GAP for aiding with computations. 

\section{Preliminaries}

\label{sec:prelims}

In this section, we go over some background material on the structure of $\NN_0^d$ and $d$-dimensional partitions, then discuss the constants $\rr_k$ which appear throughout the paper.

\subsection{Points in $\NN_0^d$}

We denote points in $\NN_0^d$ by bolded lowercase letters and their coordinates by unbolded letters with subscripts, e.g., $\mathbf{x} = (x_1, \dots, x_d)$. The points in $\NN_0^d$ have a natural partial order $\leq$. Namely, we let $\mathbf{a} \leq \mathbf{b}$ if $a_i \leq b_i$ for $i = 1,\dots, d$, with equality if and only if $a_i = b_i$ for all $i$. Hence, the unique minimal element of $\NN_0^d$ is the origin $\mathbf{0} := (0, \dots, 0)$. 

Also, let $\mathbf{e^d_i}$ be the point in $\NN_0^d$ with $(e^d_i)_j = 0$ when $i \neq j$ and $(e^d_i)_j =1$ when $i = j$, so $\mathbf{e^d_1}, \dots, \mathbf{e^d_d}$ generate $\NN_0^d$ as an additive monoid.

\subsection{$d$-dimensional partitions}

A \vocab{$d$-dimensional partition} of $n$ is a partition $\pi$ into nonnegative integer parts $\pi_\bs{x}$, indexed by $\bs{x} \in \NN_0^d$, for which 
\[ n = \sum_{\mathbf{x} \in \NN_0^d} \pi_{\mathbf{x}} \qquad \text{and} \qquad \pi_{\mathbf{a}} \geq \pi_{\mathbf{b}} \text{ if }\mathbf{a} \geq \mathbf{b}.\]
For $d=0$, these consist of a single number; for $d=1$, these are the typical partitions of $n$. The cases $d=2$ and $d=3$ are known as \vocab{plane partitions} and \vocab{solid partitions}, respectively. 

One can visually represent a $d$-dimensional partition in $(d+1)$-dimensional space by stacking $\pi_{\bs{x}}$ hypercubes of dimension $d+1$ atop the axis-aligned unit $d$-cell whose least vertex (under the partial order) is at $\bs{x}$. This is known as the \vocab{Young diagram} of $\pi$ in the one-dimensional case.

Throughout the paper, we let $p_d(n)$ denote the number of $d$-dimensional partitions of $n$ and let $P_d(x) := \sum_{n=0}^\infty p_d(n)x^n$ be the $d$-dimensional partition generating function. Recall that
\begin{align*}
  P_0(x) &:= \sum_{n \geq 0} x^n = \frac{1}{1-x}, \\
  P_1(x) &:= \sum_{n \geq 0} p_1(n)x^n = \prod_{k=1}^\infty \frac{1}{1-x^k}.
\end{align*}
It is also a classic result of MacMahon~\cite{macmahon_1915} that
\[P_2(n) := \sum_{n \geq 0} p_2 (n)x^n = \prod_{k=1}^\infty \frac{1}{(1-x^k)^k}.\]
From these expressions, one can calculate the following asymptotic expressions for $p_1(n)$ and $p_2(n)$, first calculated in the 1900's by Hardy-Ramanujan~\cite{hr_1918} and Wright~\cite{wright_1931}, respectively:
\begin{align*}
  p_1(n) &\sim \frac{1}{4n\sqrt3} \exp\left( \pi \sqrt{\frac{2n}{3}} \right), \\
  p_2(n) &\sim \frac{\zeta(3)^{7/36}}{\sqrt{12\pi}} \left( \frac{n}2 \right)^{-25/36} \exp\left(3 \zeta(3)^{1/3} \left( \frac{n}2 \right)^{2/3} + \zeta'(-1)\right),
\end{align*}
where $\zeta$ denotes the Riemann zeta function.

Unfortunately, a closed-form expression for $P_d(n)$ for $d \geq 3$ and the precise asymptotics of $p_d(n)$ are unknown. However, of use to us is the following asymptotic result on $p_d(n)$ due to Arora, Bhatia, and Prasad~\cite{abp_1997}.

\begin{theorem}[{Arora-Bhatia-Prasad,~\cite[eq.~6]{abp_1997}}]
  \label{thm:abp}
  For each $d$, there exist $k_d^{-}, k_d^+ > 0$ for which
  \[k_d^{-} < \frac{\ln p_d(n)}{n^{d/(d+1)}} < k_d^+.\]
\end{theorem}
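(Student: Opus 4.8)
The plan is to establish the two inequalities by different methods: a direct construction for the lower bound, and a generating-function estimate for the upper bound. The base cases $d=1$ and $d=2$ are exactly the Hardy--Ramanujan and Wright asymptotics recalled above (which are log-free of the required form), and these can anchor an induction on $d$. For the upper bound I would write $t = e^{-s}$ with $s \to 0^+$, so that $1-t \asymp s$, and invoke the elementary transfer inequality $p_d(n) \le t^{-n} P_d(t)$, i.e. $\ln p_d(n) \le ns + \ln P_d(e^{-s})$, valid for every $s>0$. Choosing $s \asymp n^{-1/(d+1)}$ balances the two terms and makes each of order $n^{d/(d+1)}$, so the upper bound reduces entirely to the single estimate
\[ \ln P_d(e^{-s}) = O\!\left(s^{-d}\right) \qquad (s \to 0^+). \]

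For the lower bound I would argue by an explicit construction producing $\exp(\Omega(n^{d/(d+1)}))$ distinct partitions of a \emph{fixed} $n$. Fix $L \asymp n^{1/(d+1)}$ and let $\pi^0_{\mathbf{x}} = \max(0,\, L - (x_1 + \cdots + x_d))$ be a monotone ``pyramid'' that decreases away from the origin; its total mass is $\Theta(L^{d+1}) = \Theta(n)$ and its interior $I = \{\mathbf{x} : \pi^0_{\mathbf{x}} \ge 1\}$ has $N = \Theta(L^d) = \Theta(n^{d/(d+1)})$ cells. The key point is that for \emph{every} subset $T \subseteq I$ the perturbed array $\pi^0 + \mathbf{1}_T$ is still a valid $d$-dimensional partition: if $\mathbf{y} < \mathbf{x}$ then $\pi^0_{\mathbf{y}} \ge \pi^0_{\mathbf{x}} + 1$, so adding a $0/1$ indicator can never break monotonicity. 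These partitions are distinct with total $|\pi^0| + |T|$, so fixing $|T| = t$ yields $\binom{N}{t}$ partitions of a common size; after a small adjustment of $L$ and the apex height so that the required $t = n - |\pi^0|$ falls in the central range $t \asymp N$, this gives $\binom{N}{t} = \exp(\Omega(N)) = \exp(\Omega(n^{d/(d+1)}))$ partitions of exactly $n$. (The same family shows $\ln P_d(e^{-s}) \gtrsim s^{-d}$, but the direct count is what the theorem needs.)

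The upper estimate $\ln P_d(e^{-s}) = O(s^{-d})$ is the heart of the matter and is where I expect the real difficulty. For $d \le 2$ the product formulas reduce it to the computation $-\sum_m a_m \ln(1 - t^m) \asymp (1-t)^{-d}$, where the apparent factor $\ln(1/s)$ cancels once the sum is compared with its integral---and this cancellation is exactly the subtlety, since pulling out the largest term instead loses a factor $\ln(1/s)$ and only gives $\ln p_d(n) = O(n^{d/(d+1)} \ln n)$, too weak for a bounded ratio. For $d \ge 3$ no product formula is available (MacMahon's formula fails), so the naive route of writing a partition as a nested sequence of $(d-1)$-dimensional slices and then forgetting the nesting overcounts catastrophically: it replaces the true Euler exponent $\Theta(m^{d-1})$ by $p_{d-1}(m)$, which is exponentially large. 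One must therefore bound $P_d$ while \emph{respecting} the nesting. The approach I would take is to prove the equivalent box statement that the number of order ideals of $\{0, \dots, L\}^{d+1}$ is $\exp(O(L^d))$---slicing an order ideal along one coordinate presents it as a monotone $\{0,\dots,L+1\}$-valued function on $\{0,\dots,L\}^d$, and the task is to count these without the $\ln L$ loss that results from treating the $L+1$ level sets as independent ideals---and then to pass from the box to the unbounded count $p_d(n)$ by a bulk--spike decomposition, with a box of side $\Theta(n^{1/(d+1)})$ supplying the bulk and the exterior ``spikes'' having interface of size $\le n/L \asymp n^{d/(d+1)}$ and hence contributing only a lower-order number of configurations. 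Making both the box bound and the spike estimate quantitative \emph{without} reintroducing logarithmic factors, so that $\ln p_d(n)/n^{d/(d+1)}$ is genuinely bounded, is the most delicate step; it is precisely what separates this from a routine exercise, and I would organize it as an induction on $d$ anchored at the known cases $d = 1, 2$.
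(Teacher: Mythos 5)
The first thing to note is that the paper does not prove this statement at all: it is quoted, with attribution, as \cite[eq.~6]{abp_1997} and used downstream as a black box. So there is no ``paper proof'' to compare against; the question is whether your argument stands on its own. Your lower bound does. The pyramid $\pi^0_{\mathbf{x}} = \max(0, L - x_1 - \cdots - x_d)$ with $0/1$ perturbations on the interior $I$ is a correct construction: the monotonicity check goes through because only cells with $\pi^0_{\mathbf{x}} \geq 1$ are ever incremented, and for such $\mathbf{x}$ every $\mathbf{y} < \mathbf{x}$ does satisfy $\pi^0_{\mathbf{y}} \geq \pi^0_{\mathbf{x}} + 1$ (your blanket claim fails for pairs outside the pyramid, but those are never incremented, so no harm). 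Hitting $n$ exactly is genuinely fixable with the apex-height knob you mention, since the gap between reachable totals for consecutive $L$ is $\Theta(L^{d-1}) = o(\#I)$, and adding mass to the origin column preserves both validity and the perturbation argument. This half is complete modulo routine bookkeeping.

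The upper bound, however, has a genuine gap, and you have named it yourself: everything is reduced to $\ln P_d(e^{-s}) = O(s^{-d})$ --- equivalently, to a log-free $\exp(O(L^d))$ count of order ideals of $\{0,\dots,L\}^{d+1}$ plus a bulk--spike transfer --- and that estimate is never proved. What you give is a plan together with an accurate diagnosis of why the obvious arguments fail, and the diagnosis itself shows that the unproved step \emph{is} the entire content of the theorem: every natural encoding of an order ideal of the box (Dilworth chain decomposition, the antichain of maximal elements, slicing into level sets) costs $\exp(O(L^d \ln L))$, and removing the logarithm is a Kleitman-type problem --- for the Boolean cube this is exactly Dedekind's problem, where the log-free bound is a nontrivial theorem of Kleitman --- and your sketch does not indicate how the proposed induction on $d$ would achieve it. There is also a secondary soundness issue in the reduction: the transfer inequality $\ln p_d(n) \leq ns + \ln P_d(e^{-s})$ is vacuous unless $P_d(e^{-s})$ converges for the chosen $s \to 0^+$, i.e.\ unless $p_d$ already grows subexponentially; this is not known a priori (the crude bound $p_d(n) \leq \binom{\# \{\mathbf{x} \in \NN_0^{d+1} : \prod_i (x_i+1) \leq n\}}{n}$ is superexponential, and polycube-counting only gives a radius of convergence bounded away from $0$, not equal to $1$). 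This circularity is harmless only if your box-plus-spike argument bounds $p_d(n)$ directly, bypassing the generating function --- but that argument is precisely the part that is missing. As it stands, the proposal proves the lower bound and correctly frames the upper bound, but does not prove it; completing it requires either carrying out the log-free ideal count honestly or following the directed-lattice-animal argument of the cited paper.
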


More information on multi-dimensional partitions can be found in~\cite{andrews_1998}. A possible point of confusion is the discrepancy between the dimension of a generalized numerical semigroup and the dimension of a multi-dimensional partition; when working with a $d$-dimensional GNS, we usually opt to work with $(d-1)$-dimensional partitions. To make this distinction clear, especially with indices, we use the convention $\bs{a}, \bs{b}, \bs{e_i^d}, \bs{m}, \bs{x}, \bs{y}, \bs{z} \in \NN_0^d$ and $\bs{e_i^{d-1}}, \bs{u}, \bs{v}, \bs{w} \in \NN_0^{d-1}$.

\subsection{Min-sums}

In this paper, we introduce the following operation on $d$-dimensional partitions.

\begin{definition}
  For two $d$-dimensional partitions $\pi$ and $\pi'$, the \vocab{min-sum} $\pi \boxplus \pi'$ is the $d$-dimensional partition $\tau$ with entries $\displaystyle \tau_{\mathbf{x}} = \min_{\mathbf{a} + \mathbf{b} = \mathbf{x}} \{ \pi_{\mathbf{a}} + \pi'_{\mathbf{b}} \}$.
\end{definition}

One can check that $\boxplus$ is commutative and associative. To our knowledge, this operation on multi-dimensional partitions, even in the one-dimensional case, is new.

\begin{example}
  Let $\pi = [4,2,1]$ and $\pi' = [3,2,2,1]$ be one-dimensional partitions. Then we have
  \begin{align*}
    (\pi \boxplus \pi')_0 &= 4 + 3 = 7\\
    (\pi \boxplus \pi')_1 &= \min(4+2,2+3) = 5 \\
    (\pi \boxplus \pi')_2 &= \min(4+2,2+2,1+3) = 4 \\
    &\vdots
  \end{align*}
  Overall, we get $\pi \boxplus \pi' = [7,5,4,3,2,2,1]$.
\end{example}

  We will use the min-sum operator $\boxplus$ to help us define the multset and depth of a GNS.

  \begin{remark*}
  There is a connection between $\boxplus$ and tropical geometry. Specifically, let $a \oplus b := \min(a,b)$ and $a \odot b := a + b$ denote tropical addition and multiplication, respectively. Then working in $\ZZ[x_1, \dots, x_d]$, we have that
  \[\left( \bigoplus_{\bs{a} \in \NN_0^d} (\pi_\bs{a} \odot x^\bs{a}) \right) \odot \left( \bigoplus_{\bs{a} \in \NN_0^d} (\pi'_\bs{a} \odot x^\bs{a}) \right) = \bigoplus_{\bs{a} \in \NN_0^d} (\pi \boxplus \pi')_\bs{a} \odot x^\bs{a},\]
  where $x^{\bs{a}} = x_1^{a_1} \cdots x_d^{a_d}$.

  We can also interpret the min-sum operator in terms of multiplication of monomial ideals. If we work in the ring $\ZZ[x_1, \dots, x_d, y]$ then we have that
  \[\left( \sum_{\bs{a} \in \NN_0^d} (x^{\bs{a}} y^{\pi_{\bs{a}}}) \right) \cdot \left( \sum_{\bs{a} \in \NN_0^d} (x^{\bs{a}} y^{\pi'_{\bs{a}}}) \right) = \sum_{\bs{a} \in \NN_0^d} (x^{\bs{a}} y^{(\pi \boxplus \pi')_{\bs{a}}}),\]
  where we are summing principal ideals.

\end{remark*}

\subsection{The constants $\rr_k$}
It will be helpful to define the following family of constants. 

\begin{definition}
For positive integers $k$, let the constant $\rr_k$ be the root of largest magnitude of $x^k - (x+1)^{k-1} = 0$. The values of these constants begin as follows:
\begin{align*}
  \rr_1 &= 1 \\
  \rr_2 &= \frac{1+\sqrt{5}}{2} = 1.6180\dots \\
  \rr_3 &= 2.1479\dots \\
  \rr_4 &= 2.6297\dots \\
  &\vdots
\end{align*}
\end{definition}

We verify that $\rr_k$ is well-defined with the following lemma.

\begin{lemma}
  The polynomial $x^k - (x+1)^{k-1}$ has a unique root of largest magnitude $\rr_k$, and $\rr_k \geq 1$.
\end{lemma}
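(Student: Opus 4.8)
The plan is to separate the statement into its two claims and handle the real root first. To locate a positive real root that is at least $1$, I would study the ratio $g(x) := x^k/(x+1)^{k-1}$ on $(0,\infty)$, since $\rr_k$ is a root of $x^k-(x+1)^{k-1}$ exactly when $g(x)=1$. A quick logarithmic derivative computation, $\frac{d}{dx}\ln g(x) = \frac{k}{x} - \frac{k-1}{x+1} = \frac{x+k}{x(x+1)} > 0$, shows that $g$ is strictly increasing on $(0,\infty)$, running from $0$ up to $+\infty$. Hence $g$ attains the value $1$ exactly once. Because $g(1) = 2^{-(k-1)} \leq 1$ and $g$ is increasing, that unique crossing point $\rr_k$ satisfies $\rr_k \geq 1$, with equality precisely when $k=1$. (For $k=1$ the polynomial degenerates to $x-1$, whose unique root is $\rr_1=1$, so that case is immediate.)

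The more substantive claim is that $\rr_k$ is the unique root of largest modulus. The key observation is that any root $z$ satisfies $z^k = (z+1)^{k-1}$, and therefore $|z|^k = |z+1|^{k-1}$. Applying the triangle inequality $|z+1| \leq |z|+1$ yields $|z|^k \leq (|z|+1)^{k-1}$, i.e.\ $g(|z|) \leq 1$; since $g$ is increasing with $g(\rr_k)=1$, this forces $|z| \leq \rr_k$. Thus \emph{every} root has modulus at most $\rr_k$. To establish uniqueness on the critical circle, I would suppose $|z| = \rr_k$ and trace back through the inequalities: equality must hold throughout, so in particular $|z+1| = |z|+1$. This is the equality case of the triangle inequality, which forces $z$ to be a nonnegative real multiple of $1$, i.e.\ a nonnegative real. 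Since $z=0$ gives the contradiction $0 = 1^{k-1} = 1$, we conclude $z$ is a positive real root, and the only such root is $\rr_k$ itself. Hence $\rr_k$ is the unique root of maximal modulus.

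I expect the main obstacle to be precisely this last uniqueness step on the circle $|z| = \rr_k$; the existence and the bound $\rr_k \geq 1$ are a routine monotonicity-plus-intermediate-value argument, and the global modulus bound follows cleanly from one application of the triangle inequality. The two points warranting care are (i) invoking the exact equality condition of the triangle inequality, namely that $|z+1| = |z|+1$ with $z \neq 0$ forces $z$ to be real and positive, and (ii) keeping track of the degenerate exponent when $k=1$ so that the argument reads correctly for all $k \geq 1$.
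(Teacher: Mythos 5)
Your proof is correct, and its central step --- applying the triangle inequality to $|z|^k = |z+1|^{k-1}$ to conclude that every root has modulus at most the positive real root --- is exactly the paper's key observation. Where you diverge is in the supporting steps, and your versions are arguably tighter. For existence and uniqueness of the positive root, the paper notes $f(1) \leq 0$ plus the positive leading coefficient and then invokes Descartes' rule of signs, whereas you get both from strict monotonicity of $g(x) = x^k/(x+1)^{k-1}$ via the logarithmic derivative, which also hands you $\rr_k \geq 1$ immediately from $g(1) = 2^{-(k-1)} \leq 1$. More substantively, the paper only uses the weak statement that equality in $|z+1| \leq |z|+1$ forces $z$ to be real, and consequently needs a separate third step ruling out negative roots of large modulus (a sign check of $f$ on $(-\infty,-1)$, split by the parity of $k$). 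You instead invoke the sharp equality case of the triangle inequality --- $|z+1| = |z|+1$ with $z \neq 0$ forces $z$ to be a positive real --- which eliminates the negative-root case analysis entirely. Both arguments are complete; yours trades Descartes and a parity case-check for a one-line calculus computation and the exact equality condition, and as a bonus your monotonicity of $g$ would also yield simplicity of the root (which the paper proves but the lemma statement does not require).
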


\begin{proof}

Let $f(x) = x^k - (x+1)^{k-1}$. We prove the following facts, which suffice to prove the claim:
\begin{itemize}
  \item[(1)] We have that $\rr_k$ is real and a simple root.
  \item[(2)] The polynomial $f$ has a unique positive root $\rr_k^+$, and $\rr_k^+ > 1$.
  \item[(3)] We must have $\rr_k > 0$.
\end{itemize}

To prove (1), note that any root $r$ of $f$ satisfies $|r|^k = |r+1|^{k-1} \leq (|r| + 1)^{k-1}$ with equality only if $r$ is real, so any root will have magnitude at most the magnitude of the largest real root. Hence, we have that $\rr_k$ is real. Moreover, one can check that $f(x)$ and $f'(x)$ share no common roots, so $\rr_k$ is a simple root.

To show (2), note that $f(1) \leq 0$ and the leading coefficient of $f$ is positive, so $f$ has a positive root $\rr_k^+ \geq 1$. By Descartes' rule of signs, $f$ has a unique positive root. Thus, if $\rr_k > 0$ then $\rr_k = \rr_k^+$.

Finishing with (3), suppose for the sake of contradiction that $\rr_k < 0$. Then we must have that $\rr_k \leq -|\rr_k^+| \leq -1$ since $\rr_k$ is the root of largest magnitude. But note that for $x < -1$, we have $f(x) > 0$ when $k$ is even and $f(x) < 0$ when $k$ is odd. In particular, we have $f(\rr_k) \neq 0$, a contradiction.
\end{proof}

While not strictly necessary for the rest of the paper, it is still natural to consider the size of $\rr_k$ as $k$ goes to infinity. By writing the equation as $x^{k/(k-1)} = x+1$, it is clear that $\rr_k$ is increasing in $k$ and goes to infinity as $k$ goes to infinity.

\begin{proposition}
  We have $\rr_k \sim k/(\ln k)$.
\end{proposition}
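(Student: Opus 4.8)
The plan is to show $\rr_k \sim k/\ln k$ by extracting sharp asymptotics from the defining relation. I would start from the form $\rr_k = (1 + 1/\rr_k)^{k-1}$, already noted in the excerpt, and take logarithms to get $\ln \rr_k = (k-1)\ln(1 + 1/\rr_k)$. Writing $r := \rr_k$ for brevity and using $\ln(1+1/r) = 1/r - 1/(2r^2) + O(1/r^3)$ (valid since $r \to \infty$ as $k \to \infty$), this becomes
\[
  \ln r = (k-1)\left( \frac1r - \frac{1}{2r^2} + O(r^{-3}) \right) = \frac{k-1}{r} + O\!\left( \frac{k}{r^2} \right).
\]
The dominant balance is $\ln r \approx (k-1)/r$, i.e.\ $r \ln r \approx k - 1 \approx k$. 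This is exactly the relation solved asymptotically by $r \sim k/\ln k$, so the real content is to bootstrap from this to a clean asymptotic.

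First I would establish the crude estimate $r \ln r \sim k$, which follows immediately from the display above once we check the error term is lower order: since $r \to \infty$, the term $O(k/r^2) = O((r \ln r)/r^2) = O((\ln r)/r)$ is negligible compared to $\ln r$. From $r \ln r \sim k$, taking logarithms gives $\ln r + \ln\ln r \sim \ln k$, hence $\ln r \sim \ln k$ (as $\ln \ln r = o(\ln r)$). Substituting $\ln r \sim \ln k$ back into $r = (k-1)/\ln r \cdot (1 + o(1))$, which rearranges the dominant balance, yields
\[
  r \sim \frac{k}{\ln r} \sim \frac{k}{\ln k}.
\]
This is the standard two-step bootstrap used to invert $r \ln r = k$, familiar from the asymptotics of the Lambert $W$-function, consistent with the heuristic $\rr_k \approx e^{W(k-1)}$ mentioned in the commented-out remark.

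To make this rigorous rather than heuristic I would be careful about two points. The monotonicity and divergence of $\rr_k$, needed to guarantee $r \to \infty$ so that the logarithmic expansions are valid, are already granted in the paragraph preceding the proposition. The main technical obstacle is controlling the error terms through the bootstrap: specifically, one must verify that the second-order term in $\ln(1+1/r)$ and the distinction between $k-1$ and $k$ do not affect the leading asymptotic. I would handle this by proving matching upper and lower bounds. For the lower bound, drop the negative $-1/(2r^2)$ term to get $\ln r \le (k-1)/r$, giving $r \ln r \le k-1$; for the upper bound, use $\ln(1+1/r) \ge 1/r - 1/(2r^2)$ to get $r \ln r \ge (k-1)(1 - 1/(2r))$, and since $1/(2r) \to 0$ both bounds force $r \ln r = (1+o(1))k$. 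The inversion step $r \ln r \sim k \implies r \sim k/\ln k$ is then a purely elementary lemma about real sequences, which I would state and verify directly rather than invoking $W$.
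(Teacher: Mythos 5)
Your proof is correct, and it reaches the result by a more self-contained route than the paper. Both arguments start from the same rewriting $\rr_k = (1+1/\rr_k)^{k-1}$ and exploit the same dominant balance $\rr_k \ln \rr_k \approx k$, but the paper exponentiates to get $\rr_k^{\rr_k} = e^{k(1-o(1))}$ and then finishes by citing the Lambert $W$-function expansion $W(x) = \ln x - \ln\ln x + o(1)$, whereas you take logarithms, pin down $r\ln r = (1+o(1))k$ via the two-sided bounds $x - x^2/2 \le \ln(1+x) \le x$, and then invert elementarily: $\ln r + \ln\ln r = \ln k + o(1)$ gives $\ln r \sim \ln k$, hence $r = (r\ln r)/\ln r \sim k/\ln k$. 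What your approach buys is independence from the external reference: every step is verified with explicit inequalities, and the bookkeeping for the $k-1$ versus $k$ discrepancy and the $-1/(2r^2)$ correction is transparent. What the paper's approach buys is brevity, since once the $W$ expansion is granted the conclusion is a one-line substitution. One minor caveat in your write-up: in the first bootstrap paragraph, the step $O(k/r^2) = O((r\ln r)/r^2)$ quietly assumes $k = O(r\ln r)$, but at that point you only have the inequality in the direction $r\ln r \le k-1$; the missing direction needs a lower bound such as $\ln(1+1/r) \ge 1/(2r)$. This is harmless, because your final matching-bounds argument (which does prove $r\ln r \ge (k-1)(1-1/(2r))$, hence $k = O(r\ln r)$) supersedes that heuristic pass and is airtight on its own.
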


\begin{proof}
  Rewrite the given equation to the form $x = (1+1/x)^{k-1}$, which in turn implies
  \begin{align*}
  \rr_k^{\rr_k} &= \left(\left( 1 + \frac{1}{\rr_k} \right)^{\rr_k}\right)^{k-1} = e^{k(1-o(1))},
\end{align*}
since as $k$ goes to infinity, so does $\rr_k$. The solution to the equation $x^x = e^a$ is $x = e^{W(a)}$, where $W$ is the Lambert $W$-function. It is known (see, e.g.~\cite[eq.~4.18]{w_1996}) that $W(x) = \ln x - \ln \ln x + o(1)$, so we have
\[\rr_k = e^{\ln k - \ln \ln k + o(1)} = (1+o(1))\left( \frac{k}{\ln k} \right),\]
as desired.
\end{proof}

\section{Invariants of generalized numerical semigroups}

\label{sec:invariants}

In this section, we define a number of invariants that will help us count generalized numerical semigroups.

\subsection{Multiplicity and depth}

First, we define analogues of multiplicity and depth for generalized numerical semigroups.

Recall that the \vocab{multiplicity} $m(\Lambda)$ of a numerical semigroup $\Lambda$ is the least nonzero integer contained in $\Lambda$. In higher dimensions, a GNS may not have a unique least nonzero point. Nevertheless, we generalize the notion of multiplicity to GNSs as follows.

\begin{definition}
  Let $\Lambda^d$ be a GNS. The \vocab{multset} $\mm(\Lambda^d)$ is the set of minimal, nonzero points in $\Lambda^d$. In other words, $\mm(\Lambda^d) := \{ \mathbf{m} \in \Lambda^d : \mathbf{m} \not > \mathbf{x} \text{ for all }\mathbf{x} \in \Lambda^d \setminus \{ \bs{0} \} \}$.
\end{definition}

As a reminder, we drop the $\Lambda^d$ when it is clear to which GNS we are referring. We use the calligraphic $\mm$ to remind the reader that the multset is a set, not an integer like in the one-dimensional case. However, note for numerical semigroups $\Lambda$, we have $\mm(\Lambda) = \{m(\Lambda)\}$. We have the following characterization of possible multsets.

\begin{proposition}
  \label{prop:mult-char}
  A finite set $\mm \subset \NN_0^d \setminus \{\bs{0}\}$ is a possible multset if and only if:
  \begin{itemize}
    \item it is an antichain of $\NN_0^d$ under $\leq$; and
    \item it contains some multiple of $\mathbf{e^d_i}$ for $i=1,\dots,d$. 
  \end{itemize}
\end{proposition}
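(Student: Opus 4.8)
The plan is to prove both implications, with the forward direction amounting to unwinding the definition of the multset and the reverse direction requiring an explicit construction of a GNS realizing a prescribed set as its multset.

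For necessity, suppose $\mm = \mm(\Lambda^d)$ for some GNS $\Lambda^d$. The antichain property is immediate: if $\bs{m}_1 < \bs{m}_2$ were both minimal nonzero points, then $\bs{m}_2$ would fail to be minimal. For the axis condition, I would use cofiniteness: for each $i$, only finitely many multiples $k\bs{e_i^d}$ can lie outside $\Lambda^d$, so there is a least positive integer $k_i$ with $k_i \bs{e_i^d} \in \Lambda^d$. Every nonzero point strictly below $k_i \bs{e_i^d}$ has the form $j\bs{e_i^d}$ with $0 < j < k_i$, hence lies outside $\Lambda^d$ by minimality of $k_i$; therefore $k_i \bs{e_i^d}$ is a minimal nonzero point and belongs to $\mm$.

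For sufficiency, given a finite antichain $\mm$ containing a positive multiple of each $\bs{e_i^d}$, I would take the candidate $\Lambda^d := \{\bs{0}\} \cup U$, where $U := \{\bs{x} \in \NN_0^d : \bs{x} \geq \bs{m} \text{ for some } \bs{m} \in \mm\}$ is the upward closure of $\mm$. (Note the naive choice of the submonoid \emph{generated} by $\mm$ fails to be cofinite — for instance $\langle (2,0),(0,2)\rangle$ omits every point with an odd coordinate — so the upward closure is the right object.) Three things need checking. First, $\Lambda^d$ is a submonoid: it contains the identity $\bs{0}$, and if $\bs{x} \geq \bs{m}$ and $\bs{y} \geq \bs{m}'$ with $\bs{m},\bs{m}' \in \mm$, then $\bs{x}+\bs{y} \geq \bs{m}$, so $U$ is closed under addition. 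Second, the multset of $\Lambda^d$ is exactly $\mm$: the minimal elements of the upward closure of an antichain are precisely that antichain, since each $\bs{m} \in \mm$ is minimal (any point of $U$ strictly below it would lie above some $\bs{m}' \in \mm$, forcing $\bs{m}' < \bs{m}$ and contradicting the antichain property), while conversely every minimal element of $U$ lies above, hence equals, some element of $\mm$.

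The crux is cofiniteness, and this is exactly where the axis hypothesis enters. Writing $k_i \bs{e_i^d} \in \mm$, I would observe that any $\bs{x}$ with $x_i \geq k_i$ already satisfies $\bs{x} \geq k_i \bs{e_i^d}$ and hence lies in $U$; consequently $\NN_0^d \setminus U$ is contained in the finite box $\{\bs{x} : 0 \leq x_i < k_i \text{ for all } i\}$, so $\Lambda^d$ is cofinite. I expect the only delicate point to be keeping the two conditions in their proper roles: the antichain condition is what pins down the minimal elements (the multset), whereas the axis condition is what forces cofiniteness. It is also worth remarking, via Dickson's lemma, that every antichain in $\NN_0^d$ is automatically finite, so finiteness is a consequence of the antichain condition rather than an independent hypothesis.
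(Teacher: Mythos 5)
Your proof is correct and follows essentially the same route as the paper: the forward direction by unwinding minimality and cofiniteness, and the reverse direction by taking the upward closure of $\mm$ and checking closure under addition, cofiniteness via the axis elements, and that the minimal elements recover $\mm$. Your version is in fact slightly more careful than the paper's, since you explicitly adjoin $\bs{0}$ to the upward closure and verify the multset claim rather than asserting it by construction.
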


\begin{proof}
  We first show the forward direction. If $\mm(\Lambda^d)$ contained two comparable elements $\bs{x} > \bs{y}$, then this contradicts the definition of a multset, since $\bs{y} \in \Lambda^d \setminus \{ \bs{0} \}$. On the other hand, if $\mm$ did not contain an element on the $i$-th coordinate axis, then $\Lambda^d$ does not contain any point on the axis and is thus not cofinite.

  In the reverse direction, suppose $\mm$ is an antichain that contains the elements $m_i \cdot \bs{e^d_i}$ for indices $i=1,\dots,d$. Let
  \[\Lambda^d = \{ \bs{x} \in \NN_0^d : \bs{x} \geq \bs{m} \text{ for some }\bs{m} \in \mm\}.\]
  Then $\Lambda^d$ is closed upwards, so it is closed under addition. It is cofinite, since it includes all elements with $x_i \geq m_i$ for each $i$ and thus excludes at most $\prod_{i=1}^d m_i$ elements. Finally, it has multset $\mm$ by construction. Hence, $\mm$ is a valid multset.
\end{proof}

We also generalize the notion of depth to GNSs. For sets $A,B \subseteq \NN_0^d$, define
\begin{align*}
  A+B &:= \{\mathbf{a} + \mathbf{b} : \mathbf{a} \in A, \ \mathbf{b} \in B\}, \\
  kA &:= \underbrace{A + \dots + A}_{k \text{ times}}.
\end{align*}
 In particular, $kA \neq \{ka : a \in A\}$. One may recognize these as Minkowski sum operators.

\begin{definition}
  The \vocab{depth} $q(\Lambda^d)$ of the GNS $\Lambda^d$ is the least integer $q$ for which
  \[\{\bs{x} \in \NN_0^d : \bs{x} \geq \bs{a} \text{ for some }\bs{a} \in q\mm\} \subseteq \Lambda^d.\]
\end{definition}

\begin{example}
  \label{ex:gns-2}
  Let 
  \[\Lambda^2 = \NN_0^2 \setminus \{(0,1),(0,3),(1,0),(1,1),(1,2),(2,0), (3,1), (3,3), (4,1)\}.\] 
  Then $\Lambda^2$ is a GNS with genus $g = 9$, multset $\mm = \{(0,2),(2,1),(3,0)\}$, and depth $q=3$.
  See the left side of~\Cref{fig:ex-gns}, where cells correspond to their bottom left corner (gray if included, white if excluded) and the stars indicate the multset.

  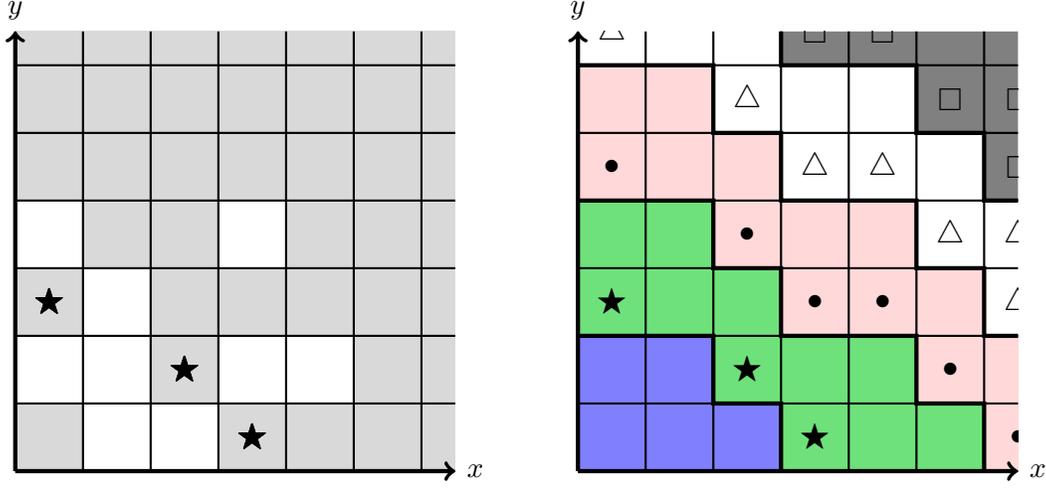
\begin{figure}
    \begin{center}
  \begin{tikzpicture}[scale=0.9]
    \fill[gray!30!white] (0,0)--(6.5,0)--(6.5,6.5)--(0,6.5)--cycle;
    \foreach \xx/\yy in {0/1,0/3,1/0,1/1,1/2,2/0,3/1,3/3,4/1} {
    \fill[white] (\xx,\yy)--(\xx+1,\yy)--(\xx+1,\yy+1)--(\xx,\yy+1)--cycle;
    }
    \draw[ultra thick,->] (0,0)--(6.5,0) node[right] {$x$};
    \draw[ultra thick,->] (0,0)--(0,6.5) node[above] {$y$};
    t
    \foreach \i in {0,...,6} {
    \draw[thick] (0,\i)--(6.5,\i);
    \draw[thick] (\i,0)--(\i,6.5);
    \foreach \xx/\yy in {0/2,2/1,3/0} {
    \node at (\xx+0.5,\yy+0.5) {\large $\bigstar$}; 
    }
    }
  \end{tikzpicture}
  \qquad
  \begin{tikzpicture}[scale=0.9]
    \foreach \xx/\hh in {0/1,1/1,2/0} {
    	\foreach \yy in {0,...,\hh} {
    \fill[blue!50!white] (\xx,\yy)--(\xx+1,\yy)--(\xx+1,\yy+1)--(\xx,\yy+1)--cycle;
	}
    }
    \foreach \xx/\bb/\hh in {0/2/3,1/2/3,2/1/2,3/0/1,4/0/1,5/0/0} {
    	\foreach \yy in {\bb,...,\hh} {
    \fill[red!30!blue!20!green!60!white] (\xx,\yy)--(\xx+1,\yy)--(\xx+1,\yy+1)--(\xx,\yy+1)--cycle;
	}
    }
    \foreach \xx/\bb/\hh in {0/4/5,1/4/5,2/3/4,3/2/3,4/2/3,5/1/2} {
    	\foreach \yy in {\bb,...,\hh} {
    \fill[red!15!white] (\xx,\yy)--(\xx+1,\yy)--(\xx+1,\yy+1)--(\xx,\yy+1)--cycle;
	}
    }
    \fill[gray] (3,6)--(5,6)--(5,5)--(6,5)--(6,4)--(6.5,4)--(6.5,6.5)--(3,6.5)--cycle;
    \fill[red!15!white] (6,0)--(6,2)--(6.5,2)--(6.5,0)--cycle;
    \foreach \i in {0,...,6} {
    \draw[thick] (0,\i)--(6.5,\i);
    \draw[thick] (\i,0)--(\i,6.5);
    }
    
    \begin{scope}
      \clip (6.5,0)--(6.5,6.5)--(0,6.5)--(0,0)--cycle;
    \foreach \xx/\yy in {0/2,2/1,3/0} {
    \node at (\xx+0.5,\yy+0.5) {\large $\bigstar$}; 
    }

    \foreach \xx/\yy in {0/4,2/3,3/2,4/2,5/1,6/0} {
    \node at (\xx+0.5,\yy+0.5) {\large $\bullet$}; 
    }
    
    \foreach \xx/\yy in {0/6,2/5,3/4,4/4,5/3,6/2,6/3,7/2,8/1,9/0} {
    \node at (\xx+0.5,\yy+0.5) {\large $\triangle$}; 
    }

    \foreach \xx/\yy in {3/6,5/5,6/4,4/6,6/5} {
    \node at (\xx+0.5,\yy+0.5) {\large $\square$}; 
    }
  \end{scope}

    \draw[ultra thick,->] (0,0)--(6.5,0) node[right] {$x$};
    \draw[ultra thick,->] (0,0)--(0,6.5) node[above] {$y$};
    \draw[ultra thick] (0,2)--(2,2)--(2,1)--(3,1)--(3,0);
    \draw[ultra thick] (0,4)--(2,4)--(2,3)--(3,3)--(3,2)--(5,2)--(5,1)--(6,1)--(6,0);
    \draw[ultra thick] (0,6)--(2,6)--(2,5)--(3,5)--(3,4)--(5,4)--(5,3)--(6,3)--(6,2)--(6.5,2);
    \draw[ultra thick] (3,6.5)--(3,6)--(5,6)--(5,5)--(6,5)--(6,4)--(6.5,4);
  \end{tikzpicture}
\end{center}
  \caption{A two-dimensional GNS and multset, along with its depth-$k$ regions.}
  \label{fig:ex-gns}
\end{figure}
\end{example}

\subsection{Depth-$k$ regions}

Fixing the multset of a GNS is a strong condition that imposes restrictions on what other elements must be included. In the one-dimensional case, we characterize numerical semigroups of multiplicity $m$ by partitioning 
\[\NN_0 = \{0,1,\dots,m-1\} \sqcup \{m,\dots,2m-1\} \sqcup \{2m, \dots 3m-1\} \sqcup\cdots\]
into sets of size $m$. Then the depth is the least $q$ for which $\{qm, \dots, (q+1)m-1\} \subseteq \Lambda$, since then every integer above $qm$ is contained in $\Lambda$. We somewhat generalize this concept to GNSs as follows.

\begin{definition}
  Let $\mm$ be a multset. For nonnegative integers $k$, define the region
  \begin{align*}
    \mathcal{R}_{\leq k}(\mm) &:= \{ \mathbf{x} \in \NN_0^d : \mathbf{x} \not\geq \mathbf{a} \text{ for all }\mathbf{a} \in k\mm\}.
\end{align*}
The \vocab{depth-$k$ region} $\mathcal{R}_k(\mm)$ is the set $\mathcal{R}_{\leq k}(\mm) \setminus \mathcal{R}_{\leq k-1}(\mm)$. The size of the depth-$k$ region is $s_k(\mm) := \# \mathcal{R}_k(\mm)$. By convention, we let $\mathcal{R}_0(\mm) = \{\bs{0}\}$.
\end{definition}

Note that depth-$k$ regions are defined in terms of multsets, not GNSs, though we still drop the $\mm$ when the argument is clear. These regions serve as a ``blueprint'' for a possible GNS of a given multset, where we first let $\Lambda^d$ have multset $\mm$ then choose to exclude certain elements from finitely many regions $\mathcal{R}_k(\mm)$.

\begin{lemma}
  \label{prop:dumb-stuff}
  If $\bs{x} \in \mathcal{R}_k$ and $\bs{y} \in \mathcal{R}_{\ell}$, then $\bs{x} + \bs{y} \not\in \mathcal{R}_{\leq k+\ell-2}$.
\end{lemma}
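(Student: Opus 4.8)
The plan is to unwind the definition of the depth-$k$ region and exploit the observation that membership $\bs{x} \in \mathcal{R}_k$ carries two pieces of information: $\bs{x} \in \mathcal{R}_{\leq k}$ (an upper constraint) and $\bs{x} \notin \mathcal{R}_{\leq k-1}$ (a lower constraint). The defining upper constraint is a red herring here; the entire argument runs on the lower constraint, which says exactly that $\bs{x}$ dominates some point of $(k-1)\mm$. So the first step is to record this reformulation: negating the definition of $\mathcal{R}_{\leq k-1}$ produces a point $\bs{a} \in (k-1)\mm$ with $\bs{x} \geq \bs{a}$, and likewise a point $\bs{b} \in (\ell-1)\mm$ with $\bs{y} \geq \bs{b}$.

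Assuming $k, \ell \geq 1$, I would then simply add these. Because $\leq$ is the product partial order, it respects addition: $\bs{x} \geq \bs{a}$ and $\bs{y} \geq \bs{b}$ yield $\bs{x} + \bs{y} \geq \bs{a} + \bs{b}$. By associativity of the Minkowski sum, $(k-1)\mm + (\ell-1)\mm = (k+\ell-2)\mm$ (using $0\mm = \{\bs{0}\}$ when one index equals $1$), so $\bs{c} := \bs{a} + \bs{b}$ lies in $(k+\ell-2)\mm$ and satisfies $\bs{x} + \bs{y} \geq \bs{c}$. This exhibits a point of $(k+\ell-2)\mm$ dominated by $\bs{x} + \bs{y}$, which is precisely the negation of $\bs{x} + \bs{y} \in \mathcal{R}_{\leq k+\ell-2}$, settling the main case.

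It remains to dispose of the degenerate cases $k = 0$ or $\ell = 0$, where the convention $\mathcal{R}_0 = \{\bs{0}\}$ forces $\bs{x} = \bs{0}$ (resp. $\bs{y} = \bs{0}$). Then $\bs{x} + \bs{y} = \bs{y} \in \mathcal{R}_\ell$, and the claim reduces to $\bs{y} \notin \mathcal{R}_{\leq \ell - 2}$, which follows from $\bs{y} \notin \mathcal{R}_{\leq \ell-1}$ together with the nesting $\mathcal{R}_{\leq \ell-2} \subseteq \mathcal{R}_{\leq \ell-1}$. That nesting deserves a one-line check: if $\bs{x} \geq \bs{a}$ for $\bs{a} = \bs{m}_1 + \cdots + \bs{m}_j \in j\mm$, then dropping the last summand gives $\bs{a}' = \bs{m}_1 + \cdots + \bs{m}_{j-1} \in (j-1)\mm$ with $\bs{a} \geq \bs{a}'$ (since multset points have nonnegative coordinates), so $\bs{x} \geq \bs{a}'$; hence $\mathcal{R}_{\leq j-1} \subseteq \mathcal{R}_{\leq j}$.

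I do not anticipate a serious obstacle in this lemma: the only thing that must be gotten right is to use the correct half of the definition of $\mathcal{R}_k$—the lower constraint $\bs{x} \notin \mathcal{R}_{\leq k-1}$ rather than the superficially more natural defining upper constraint—and to keep the index arithmetic $(k-1) + (\ell-1) = k+\ell-2$ consistent with Minkowski additivity and the $k = 0$ convention. The single structural fact being leaned on is that $\leq$ is additive, which is what makes ``dominating a point of $j\mm$'' behave well under taking sums.
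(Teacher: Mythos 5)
Your proof is correct and is essentially the paper's own argument: both extract $\bs{a} \in (k-1)\mm$ and $\bs{b} \in (\ell-1)\mm$ dominated by $\bs{x}$ and $\bs{y}$ from the lower constraint $\bs{x} \notin \mathcal{R}_{\leq k-1}$, $\bs{y} \notin \mathcal{R}_{\leq \ell-1}$, and conclude via additivity of the partial order that $\bs{x}+\bs{y} \geq \bs{a}+\bs{b} \in (k+\ell-2)\mm$. Your additional treatment of the degenerate cases $k=0$ or $\ell=0$ and the nesting $\mathcal{R}_{\leq j-1} \subseteq \mathcal{R}_{\leq j}$ is fine but not part of the paper's (one-line) proof, which implicitly assumes $k,\ell \geq 1$.
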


\begin{proof}
  There are elements $\bs{a} \in (k-1)\mm$ and $\bs{b} \in (\ell - 1)\mm$ for which $\bs{x} \geq \bs{a}$ and $\bs{y} \geq \bs{b}$. Then $\bs{x} + \bs{y} \geq \bs{a} + \bs{b} \in (k+\ell-2)\mm$, so $\bs{x}+\bs{y}$ cannot be in the region $\mathcal{R}_{\leq k+\ell-2}$.
\end{proof}

\begin{corollary}
  \label{cor:dumb}
  The depth of $\Lambda^d$ is the least integer $q$ for which $\mathcal{R}_{q+1}(\mm(\Lambda^d)) \subset \Lambda^d$.
\end{corollary}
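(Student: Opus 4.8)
The plan is to translate the definition of depth into the language of the depth-$k$ regions and then show that a single region being contained in $\Lambda^d$ forces all higher regions to be as well. First I would observe that the up-set $\{\bs x \in \NN_0^d : \bs x \geq \bs a \text{ for some } \bs a \in q\mm\}$ appearing in the definition of depth is precisely the complement $\NN_0^d \setminus \mathcal{R}_{\leq q}(\mm)$. A short check shows the regions are nested, $\mathcal{R}_{\leq 0} \subseteq \mathcal{R}_{\leq 1} \subseteq \cdots$ (if $\bs x \geq \bs b$ for some $\bs b = \bs a + \bs m \in (k+1)\mm$, then $\bs x \geq \bs a \in k\mm$, giving $\mathcal{R}_{\leq k}\subseteq\mathcal{R}_{\leq k+1}$), and that they exhaust $\NN_0^d$ since any element of $k\mm$ has coordinate sum at least $k$. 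Hence $\NN_0^d \setminus \mathcal{R}_{\leq q} = \bigsqcup_{k \geq q+1} \mathcal{R}_k$, and the depth is the least $q$ with $\bigsqcup_{k \geq q+1}\mathcal{R}_k \subseteq \Lambda^d$. Since this condition trivially implies $\mathcal{R}_{q+1} \subseteq \Lambda^d$ for the same $q$, the corollary reduces to the converse: for each $q$, if $\mathcal{R}_{q+1} \subseteq \Lambda^d$ then $\mathcal{R}_k \subseteq \Lambda^d$ for all $k \geq q+1$.

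I would prove this propagation by induction on $k$, the base case $k = q+1$ being the hypothesis. For the inductive step, take $\bs x \in \mathcal{R}_k$ with $k \geq q+2$. Because $\bs x \notin \mathcal{R}_{\leq k-1}$, there is some $\bs b \in (k-1)\mm$ with $\bs x \geq \bs b$; writing $\bs b = \bs m + \bs b'$ with $\bs m \in \mm$ and $\bs b' \in (k-2)\mm$, set $\bs y := \bs x - \bs m \in \NN_0^d$. The crux is to check that $\bs y \in \mathcal{R}_{k-1}$: on one hand $\bs y \geq \bs b'$ shows $\bs y \notin \mathcal{R}_{\leq k-2}$, and on the other hand, if we had $\bs y \geq \bs a$ for some $\bs a \in (k-1)\mm$, then $\bs x = \bs m + \bs y \geq \bs m + \bs a \in k\mm$ would contradict $\bs x \in \mathcal{R}_{\leq k}$, so $\bs y \in \mathcal{R}_{\leq k-1}$. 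By the inductive hypothesis $\bs y \in \mathcal{R}_{k-1} \subseteq \Lambda^d$, and $\bs m \in \mm \subseteq \Lambda^d$, so closure under addition yields $\bs x = \bs m + \bs y \in \Lambda^d$. This decomposition step is the main obstacle, and it is exactly where the semigroup structure (closure under addition, together with the fact from \Cref{prop:mult-char} that the minimal nonzero points $\mm$ generate the whole up-set) meets the combinatorics of Minkowski powers of $\mm$ that also underlies \Cref{prop:dumb-stuff}.

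Finally I would dispatch the boundary behavior. The only case in which the indices in the step could dip to the convention $\mathcal{R}_0 = \{\bs 0\}$ is $q = 0$, which forces $\NN_0^d \setminus \{\bs 0\} \subseteq \Lambda^d$, i.e.\ $\Lambda^d = \NN_0^d$; here every region lies in $\Lambda^d$ trivially and there is nothing to prove. For $q \geq 1$ the step always runs with $k - 1 \geq 2$, so $\bs b' \neq \bs 0$ and no convention issues arise. Combining the two implications shows that for every $q$ the conditions $\mathcal{R}_{q+1} \subseteq \Lambda^d$ and $\bigsqcup_{k \geq q+1}\mathcal{R}_k \subseteq \Lambda^d$ are equivalent, so their least solutions coincide; the latter is the depth by definition, which gives the claim.
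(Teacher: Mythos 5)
Your proposal is correct and takes essentially the same route as the paper: both reduce the statement to the propagation step that containment of one region forces containment of all higher ones, and both prove that step by subtracting a multset element $\bs{m} \in \mm$ from a point of the higher region, checking that the difference lands exactly in the previous region, and then invoking closure under addition together with $\mm \subseteq \Lambda^d$. The only minor differences are that you verify $\bs{x}-\bs{m} \in \mathcal{R}_{k-1}$ directly from the definition of $k\mm$ where the paper cites \Cref{prop:dumb-stuff}, and that you make explicit the nesting/exhaustion of the regions and the degenerate boundary case, which the paper handles by assuming $\Lambda^d \neq \NN_0^d$.
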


\begin{proof}
  If $q$ is the depth of $\Lambda^d$, then by definition $\mathcal{R}_{q+1} \subset \Lambda^d$, so we show the reverse direction. Assume $\Lambda^d \neq \NN_0^d$, so $q \geq 2$. It suffices to show that if $\mathcal{R}_k \subset \Lambda^d$, then $\mathcal{R}_{k+1} \subset \Lambda^d$, since then every element bounded below by an element of $q\mm$ is in $\Lambda^d$.

  Suppose that $\bs{x} \in \mathcal{R}_{k+1}$, so there exists an $\bs{a} \in k\mm$ for which $\bs{x} \geq \bs{a}$. Then there exists an element $\bs{m} \in \mm \subseteq \mathcal{R}_2$ for which $\bs{a} - \bs{m} \in (k-1)\mm$. Now consider the point $\bs{x} - \bs{m}$. It cannot be in $\mathcal{R}_{\leq k-1}$, since $\bs{x} - \bs{m} \geq \bs{a} - \bs{m} \in (k-1)\mm$, and it cannot be in $\mathcal{R}_{k+1}$, since then by~\Cref{prop:dumb-stuff} we would have $\bs{m} + (\bs{x} - \bs{m}) \not\in \mathcal{R}_{\leq k+1}$. Hence, we have that $\bs{x} - \bs{m} \in \mathcal{R}_k$ is an element of $\Lambda^d$, so $\bs{x}$ is, too.
\end{proof}

Now, we relate the regions $\mathcal{R}_{\leq k}(\mm)$ with $(d-1)$-dimensional partitions in the following way. For $\mathbf{v} \in \NN_0^{d-1}$, let $(\pi^k)_\mathbf{v}(\mm)$ be the least integer $t$ for which $(v_1, \dots, v_{d-1}, t) \not\in \mathcal{R}_{\leq k}(\mm)$.

\begin{lemma}
  If $\mathbf{u} \leq \mathbf{v}$, then $(\pi^k)_\mathbf{u} \geq (\pi^k)_\mathbf{v}$.
\end{lemma}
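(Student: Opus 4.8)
The plan is to reduce the statement to a single structural property of the region: $\mathcal{R}_{\leq k}(\mm)$ is an \emph{order ideal} (downward closed) under $\leq$. Indeed, if $\bs{x} \in \mathcal{R}_{\leq k}$ and $\bs{y} \leq \bs{x}$, then having $\bs{y} \geq \bs{a}$ for some $\bs{a} \in k\mm$ would give $\bs{x} \geq \bs{y} \geq \bs{a}$, contradicting $\bs{x} \in \mathcal{R}_{\leq k}$; hence $\bs{y} \in \mathcal{R}_{\leq k}$. This is precisely the feature that turns the ``column-height'' function $\bs{v} \mapsto (\pi^k)_\bs{v}$ into a monotone array. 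Before comparing values I would also note that each $(\pi^k)_\bs{v}$ is finite: since $\mm$ contains a multiple $m_d \cdot \bs{e^d_d}$ of the last axis by~\Cref{prop:mult-char}, the sum $k\mm$ contains $k m_d \cdot \bs{e^d_d}$, so writing $(\bs{v}, t) := (v_1, \dots, v_{d-1}, t)$ we have $(\bs{v}, t) \geq k m_d \cdot \bs{e^d_d}$ for all $t \geq k m_d$, whence $(\pi^k)_\bs{v} \leq k m_d$.

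The core step is a one-line comparison. Put $t := (\pi^k)_\bs{u}$. By the minimality in its definition, $(\bs{u}, t) \notin \mathcal{R}_{\leq k}$, so there is $\bs{a} \in k\mm$ with $(\bs{u}, t) \geq \bs{a}$. Because $\bs{u} \leq \bs{v}$ we have $(\bs{u}, t) \leq (\bs{v}, t)$ coordinatewise, hence $(\bs{v}, t) \geq \bs{a}$ and so $(\bs{v}, t) \notin \mathcal{R}_{\leq k}$. The minimality defining $(\pi^k)_\bs{v}$ then forces $(\pi^k)_\bs{v} \leq t = (\pi^k)_\bs{u}$, which is exactly the asserted inequality.

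I do not expect a genuine obstacle: the whole content is the observation that $\mathcal{R}_{\leq k}$ is downward closed together with the fact that appending a fixed final coordinate preserves $\leq$. The only points needing care are bookkeeping ones---checking that the definition of $(\pi^k)_\bs{v}$ really does select the least escaping height, and keeping the $(d-1)$-dimensional index $\bs{v}$ separate from the $d$-dimensional point $(\bs{v},t)$. Conceptually, this lemma is what certifies that $(\pi^k)$ obeys the monotonicity axiom of a $(d-1)$-dimensional partition, so that the later partition-labeling correspondence is well founded.
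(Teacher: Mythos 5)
Your proof is correct and follows essentially the same route as the paper's: both arguments hinge on the downward-closedness of $\mathcal{R}_{\leq k}(\mm)$ together with the minimality in the definition of $(\pi^k)_\bs{v}$, with yours phrased directly (via upward-closedness of the complement) where the paper argues by contradiction. Your explicit verification of the order-ideal property and the finiteness of $(\pi^k)_\bs{v}$ are welcome elaborations of facts the paper leaves implicit, but they do not change the underlying argument.
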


\begin{proof}
  Suppose not. Then $(u_1, \dots, u_{d-1}, (\pi^k)_{\mathbf{u}})$ is at most $(v_1, \dots, v_{d-1}, (\pi^k)_{\bs{v}} - 1)$ in the partial order, but is not included in $\mathcal{R}_{\leq k}(\mm)$. This contradicts the fact that $\mathcal{R}_{\leq k}$ is closed downward.
\end{proof}

Hence, for fixed $k$ and $\mm$, the integers $(\pi^k)_\mathbf{v}(\mm)$ form a $(d-1)$-dimensional partition $\pi^k(\mm)$ of $s_1(\mm) + \dots + s_k(\mm)$.

\begin{example}
  On the right side of~\Cref{fig:ex-gns}, we have $\mm = \{(0,2),(2,1),(3,0)\}$ and the regions $\mathcal{R}_1, \mathcal{R}_2, \mathcal{R}_3$ are colored in different shades. We have
  \begin{align*}
    \pi^1(\mm) &= [2,2,1], \\
    \pi^2(\mm) &= [4,4,3,2,2,1], \\
    \pi^3(\mm) &= [6,6,5,4,4,3,2,2,1],
  \end{align*}
  so $(s_1, s_2, s_3) = (5,11,17)$. Also depicted are the sets $\mm$ (denoted by $\bigstar$), $2\mm$ (denoted by $\bullet$), $3\mm$ (denoted by $\triangle$), and $4\mm$ (denoted by $\square$).
\end{example}

In fact, we can define the partitions inductively with the min-sum operation (cf.~\S\ref{sec:prelims}).

\begin{proposition}
  \label{prop:k-sum}
  We have $\pi^k(\mm) = \underbrace{\pi^1(\mm) \boxplus \dots \boxplus \pi^1(\mm)}_{k\text{ times}}$.
  
\end{proposition}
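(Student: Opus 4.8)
The plan is to first turn the geometric definition of $\pi^k(\mm)$ into an explicit arithmetic formula in terms of the Minkowski sum $k\mm$, and then deduce the claim from the single additivity identity $\pi^{k+\ell}(\mm) = \pi^k(\mm) \boxplus \pi^\ell(\mm)$. Throughout I would split a point $\bs{a} \in \NN_0^d$ as $\bs{a} = (\bs{a}', a_d)$ with $\bs{a}' \in \NN_0^{d-1}$ recording the first $d-1$ coordinates. Unwinding the definition, $(\bs{v}, t) \notin \mathcal{R}_{\leq k}(\mm)$ precisely when $(\bs{v},t) \geq \bs{a}$ for some $\bs{a} \in k\mm$, i.e.\ when $\bs{a}' \leq \bs{v}$ and $a_d \leq t$; taking the least such $t$ gives
\[ (\pi^k)_{\bs{v}}(\mm) = \min\{\, a_d : \bs{a} \in k\mm,\ \bs{a}' \leq \bs{v} \,\}. \]
The minimum is over a nonempty set because $\mm$ contains a multiple of $\bs{e^d_d}$ by~\Cref{prop:mult-char}, so $k\mm$ contains a point on the last coordinate axis, whose first $d-1$ coordinates are $\bs{0} \leq \bs{v}$.

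Since $\boxplus$ is associative and commutative, it suffices to prove $\pi^{k}(\mm) = \pi^{k-1}(\mm) \boxplus \pi^1(\mm)$ and induct on $k$ (the base case $k=1$ being vacuous); more symmetrically, I would prove $\pi^{k+\ell}(\mm) = \pi^k(\mm) \boxplus \pi^\ell(\mm)$ for all $k,\ell \geq 1$, which specializes to $\ell = 1$. The one algebraic input is that Minkowski sum is associative, so $k\mm + \ell\mm = (k+\ell)\mm$; thus $\bs{c} \in (k+\ell)\mm$ if and only if $\bs{c} = \bs{a} + \bs{b}$ with $\bs{a} \in k\mm$ and $\bs{b} \in \ell\mm$. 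I would then fix $\bs{v}$ and establish the equality of the two partitions entrywise by a two-sided inequality against $(\pi^k \boxplus \pi^\ell)_{\bs{v}} = \min_{\bs{u}+\bs{w}=\bs{v}}\{(\pi^k)_{\bs{u}} + (\pi^\ell)_{\bs{w}}\}$.

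For the inequality $(\pi^{k+\ell})_{\bs{v}} \leq (\pi^k \boxplus \pi^\ell)_{\bs{v}}$, I would take a decomposition $\bs{v} = \bs{u} + \bs{w}$ together with minimizers $\bs{a} \in k\mm$, $\bs{b} \in \ell\mm$ realizing $(\pi^k)_{\bs{u}} = a_d$ and $(\pi^\ell)_{\bs{w}} = b_d$ with $\bs{a}' \leq \bs{u}$ and $\bs{b}' \leq \bs{w}$; then $\bs{c} := \bs{a} + \bs{b} \in (k+\ell)\mm$ satisfies $\bs{c}' = \bs{a}' + \bs{b}' \leq \bs{u} + \bs{w} = \bs{v}$, so the formula gives $(\pi^{k+\ell})_{\bs{v}} \leq c_d = a_d + b_d$, and minimizing over decompositions finishes this direction. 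For the reverse inequality, I would take $\bs{c} \in (k+\ell)\mm$ with $\bs{c}' \leq \bs{v}$ achieving $(\pi^{k+\ell})_{\bs{v}} = c_d$, write $\bs{c} = \bs{a} + \bs{b}$ as above, and set $\bs{u} := \bs{a}'$ and $\bs{w} := \bs{v} - \bs{a}'$, which is a valid point of $\NN_0^{d-1}$ since $\bs{a}' \leq \bs{c}' \leq \bs{v}$.

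The main obstacle, and essentially the only nontrivial point, is this last construction: a minimizing $\bs{c}$ only guarantees $\bs{a}' + \bs{b}' = \bs{c}' \leq \bs{v}$, not equality, so there is slack $\bs{v} - \bs{c}'$ that must be placed somewhere to obtain an exact splitting $\bs{u} + \bs{w} = \bs{v}$. Absorbing all of it into $\bs{w}$ works because it preserves both required inequalities: $\bs{a}' \leq \bs{u}$ holds with equality, and $\bs{b}' \leq \bs{w} = \bs{v} - \bs{a}'$ is exactly the rearrangement of $\bs{a}' + \bs{b}' \leq \bs{v}$. Consequently $(\pi^k)_{\bs{u}} \leq a_d$ and $(\pi^\ell)_{\bs{w}} \leq b_d$, giving $(\pi^k \boxplus \pi^\ell)_{\bs{v}} \leq a_d + b_d = c_d = (\pi^{k+\ell})_{\bs{v}}$. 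Combining the two inequalities yields $\pi^{k+\ell} = \pi^k \boxplus \pi^\ell$, and the induction then delivers the displayed $k$-fold min-sum; the remaining verifications (finiteness of each entry, and that both sides are genuine $(d-1)$-dimensional partitions) are routine and already guaranteed by the monotonicity lemma preceding the statement.
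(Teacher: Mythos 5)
Your proof is correct, but it takes a genuinely different route from the paper's. The paper's proof augments the multset to the (infinite) set $\mm' = \mm \cup \{(v_1,\dots,v_{d-1},(\pi^1)_{\bs{v}}) : \bs{v} \in \NN_0^{d-1}\}$, i.e.\ it adjoins the entire staircase graph of $\pi^1$; it then observes that the excluded region determined by $k\mm'$ coincides with $\mathcal{R}_{\leq k}(\mm)$, and that the minimal elements of $k\mm'$ are exactly the points $(v_1,\dots,v_{d-1},\tau_{\bs{v}})$ where $\tau$ is the $k$-fold min-sum. The augmentation is precisely what guarantees that decompositions with first coordinates summing \emph{exactly} to $\bs{v}$ always exist, which is the same slack issue you isolate as the crux. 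You instead handle that slack without any auxiliary construction: your explicit formula $(\pi^k)_{\bs{v}} = \min\{a_d : \bs{a} \in k\mm,\ \bs{a}' \leq \bs{v}\}$ has an inequality constraint rather than an equality constraint, so the leftover $\bs{v} - \bs{c}'$ can be dumped into one summand at no cost. What each approach buys: the paper's argument is shorter and more conceptual, but its two key claims are asserted without verification (filling them in would amount to roughly the computations you perform); your argument is self-contained and fully rigorous, and moreover establishes the stronger intermediate identity $\pi^{k+\ell}(\mm) = \pi^k(\mm) \boxplus \pi^\ell(\mm)$ for all $k,\ell \geq 1$, of which the proposition is the iterated special case. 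One small point of care: your appeal to \Cref{prop:mult-char} for nonemptiness of the minimizing set is exactly right, since a multset need not contain points over every $\bs{v}$ — this is why the naive "minimal elements of $k\mm$" reading fails and some device (your inequality constraint, or the paper's $\mm'$) is needed.
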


\begin{proof}
  Let $\tau := \pi^1 \boxplus \dots \boxplus \pi^1$ be $\pi^1$ min-summed $k$ times, and define
  \[\mm' = \mm \cup \{(v_1, \dots, v_{d-1}, (\pi^1)_\bs{v}) : \bs{v} \in \NN_0^{d-1} \}.\] Consider the set of points that are not greater than or equal to some element of $k\mm$. This is the same as the set of points that are not greater than or equal to some element of $k\mm'$. Moreover, the minimal elements of $k\mm'$ with respect to $\leq$ are precisely $(v_1, \dots, v_{d-1}, \tau_\bs{v})$ for each $\bs{v}$. Hence, $(\pi^k)_\bs{v} = \tau_\bs{v}$ and so $\pi^k \simeq \tau$.
\end{proof}

Thus, the partition $\pi^1(\mm)$ can be used to compute the other partitions $\pi^k(\mm)$, which characterize $\mathcal{R}_{\leq k}(\mm)$ and the depth-$k$ regions.

\begin{definition}
  The \vocab{shape} of $\Lambda^d$ is the $(d-1)$-dimensional partition $\pi^1( \mm(\Lambda^d) )$, whose total is the \vocab{shape size} $s(\Lambda^d) := s_1(\mm(\Lambda^d))$.
\end{definition}

Note that for $d=1$, the shape size $s(\Lambda)$ is exactly $m(\Lambda)-1$, which happens to be the length of the Kunz word of $\Lambda$ (cf.~\S\ref{sec:part}).

\section{Improved asymptotic bounds on $n_{g,d}$}

\label{sec:bounds}

Let $n_{g,d}$ denote the number of $d$-dimensional GNSs with genus $g$. In this section, we compute some bounds on $n_{g,d}$. 

\subsection{Lower bound}

In this subsection, we show~\Cref{thm:gns-record}. Specifically, we construct a family of GNSs of depth 2 whose size has the desired growth rate. To this end, let $n_{g,d,q}$ denote the number of generalized numerical semigroups with genus $g$, dimension $d$, and depth $q$. The following result implies~\Cref{thm:gns-record}. Surprisingly, the growth factor $\rr_{2^d}$ is sharp for depth 2 GNSs in the following sense.

\begin{theorem}
  \label{thm:ngd2}

  For fixed $d$, there are constants $C_{d}^-, C_{d}^+ > 0$ for which
  \[(C_d^-)^{g^{(d-1)/d}} \rr_{2^d}^g < n_{g,d,2} < (C_d^+)^{g^{(d-1)/d}} \rr_{2^d}^g.\]
\end{theorem}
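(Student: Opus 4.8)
The plan is to convert the count of depth-2 GNSs into an exact bijective sum over multsets, factor it into an exponential part (a binomial-coefficient sum, yielding the base $\rr_{2^d}$) and a subexponential part (the number of shapes, governed by \Cref{thm:abp}), and then estimate each part two-sidedly.

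First I would pin down the structure of depth-2 GNSs. By \Cref{cor:dumb}, a nontrivial $\Lambda^d$ has depth at most $2$ exactly when $\mathcal{R}_3(\mm)\subseteq\Lambda^d$, and since \Cref{prop:dumb-stuff} forces any sum of two elements of $\mathcal{R}_{\le 2}$ into $\mathcal{R}_{\ge 3}$, I claim that for a fixed multset $\mm$ the sets $\{\bs 0\}\cup S\cup\mathcal{R}_{\ge 3}$ with $\mm\subseteq S\subseteq\mathcal{R}_2(\mm)$ are precisely the depth-$\le 2$ GNSs with multset $\mm$ (closure is automatic, cofiniteness and the correct minimal elements follow since every point of $\mathcal{R}_2$ dominates some element of $\mm$). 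The forced gaps lie in $\mathcal{R}_1$ and number $s_1(\mm)-1$, so the genus is $(s_1-1)+\#(\mathcal{R}_2\setminus\Lambda^d)$, giving the exact formula
\[ n_{g,d,2}=\sum_{\mm}\left(\binom{s_2(\mm)-\#\mm}{\,g-s_1(\mm)+1\,}-\varepsilon_\mm\right), \]
where $\varepsilon_\mm\in\{0,1\}$ deletes the lone depth-$1$ semigroup. Using \Cref{prop:mult-char}, the upward closure of $\mm$ is determined by the order ideal $\mathcal{R}_{\le 1}(\mm)$, so multsets correspond bijectively to their shapes $\pi^1(\mm)$, and the outer sum becomes a sum over $(d-1)$-dimensional partitions.

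Next I would extract the base $\rr_{2^d}$. The crucial quantitative input is $s_1+s_2=\#\mathcal{R}_{\le 2}(\mm)=|\pi^1\boxplus\pi^1|$ (\Cref{prop:k-sum}) together with the doubling bound $\#\mathcal{R}_{\le 2}\le 2^d\,\#\mathcal{R}_{\le 1}$, equivalently $s_2\le (2^d-1)s_1$. I would prove this by the halving map $\bs x\mapsto(\floor{x_1/2},\dots,\floor{x_d/2})$: if $\bs x\in\mathcal{R}_{\le 2}$ its image lies in $\mathcal{R}_{\le 1}$ (otherwise $\bs x\ge 2\bs m\in 2\mm$), and each fiber has at most $2^d$ points. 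Writing $K=2^d-1$, the bookkeeping identity $\sum_g\big(\sum_s\binom{Ks}{g-s}\big)x^g=\frac{1}{1-x(1+x)^K}$ shows the coefficients grow at the rate $1/\rho$, where $\rho$ is the least positive root of $x(1+x)^K=1$; this reciprocal $r=1/\rho$ satisfies $(r+1)^{K}=r^{K+1}$, i.e. $r^{2^d}=(r+1)^{2^d-1}$, so $r=\rr_{2^d}$ exactly.

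Finally I would control the subexponential factor in both directions. For the upper bound, bound each binomial by $\binom{(2^d-1)s_1}{g-s_1+1}$ and the number of shapes of size $s$ by $p_{d-1}(s)\le\exp(k^+_{d-1}s^{(d-1)/d})\le\exp(k^+_{d-1}g^{(d-1)/d})$ via \Cref{thm:abp}; pulling this factor out and summing gives $n_{g,d,2}\le (C_d^+)^{g^{(d-1)/d}}\rr_{2^d}^g$. For the lower bound I would take a corner-simplex order ideal of size $\Theta(g)$ and perturb its $\Theta(g^{(d-1)/d})$ frontier cells independently by $\pm1$ (keeping the order-ideal and axis conditions), producing $\exp(\Theta(g^{(d-1)/d}))$ distinct valid shapes, then restrict by pigeonhole to a single optimal value of $s_1$. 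The hard part, and the main obstacle, is precisely this lower-bound construction: the doubling ratio $s_2/s_1$ only tends to $2^d-1$ as the region grows, with a boundary deficit of order $g^{(d-1)/d}$ — the very same order as the subexponential term — so I must verify that every perturbed shape keeps $s_2=(2^d-1)s_1-O(g^{(d-1)/d})$ and that this deficit degrades only the constant $C_d^-$ and not the base $\rr_{2^d}$, so that the two competing $\exp(\Theta(g^{(d-1)/d}))$ effects combine to give the claimed matching bound.
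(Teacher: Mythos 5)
Your structural reduction and upper bound are correct and essentially follow the paper's route: your exact formula is the paper's \Cref{prop:ngd2-form} (your $\varepsilon_\mm$ term even fixes a harmless overcount there, since the paper's sum as written also includes the depth-$1$ semigroup of each multset with $s_1(\mm)=g+1$), and your halving-map proof of $s_2\le(2^d-1)s_1$ is cleaner than the paper's \Cref{lem:r2-bound}, whose Loomis--Whitney deficit term is only needed for a sharpness remark, not for this theorem. The generating-function identification of $\rr_{2^d}$ and the $p_{d-1}$ count of shapes then give the upper bound exactly as in \Cref{cor:upper}.

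The lower bound, however, has a genuine gap: the step you yourself flag as ``the hard part'' is the mathematical core of the proof, and nothing in your setup supplies it. Concretely, with $N=(2^d-1)s_1$, $j=g+1-s_1$, and deficit $D=(2^d-1)s_1-s_2=O(g^{(d-1)/d})$, one has $\binom{N-D}{j}=\binom{N}{j}\prod_{i=0}^{D-1}\frac{N-j-i}{N-i}$, so the deficit costs only $c^{D}$ with $c>0$ \emph{provided} $j/N$ stays bounded away from $1$ at the chosen $s_1$; and since the achievable simplex sizes $s_{1,k,d}$ form a sparse set with gaps of order $g^{(d-1)/d}$, you must additionally show that stepping from the true maximizer $r_{2^d-1}(g)$ of $G_{2^d-1}(s,g)=\binom{(2^d-1)s}{g+1-s}$ to the nearest achievable value loses only a constant factor per step. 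Both points require localizing the maximizer: the paper proves $r_{2^d-1}(g)/g\to\cc_{2^d-1}$ with $\cc_{2^d-1}>2^{-d}$ strictly (\Cref{lem:bin-const}), which is exactly what makes the per-unit deficit cost $\frac{2^d\cc_{2^d-1}-1}{(2^d-1)\cc_{2^d-1}}$ positive, and it controls the per-step loss near the maximizer via \Cref{lem:gk-converge} and \Cref{lem:max-egf}; this is the content of \Cref{lem:lower-bound}. Your proposal asserts the conclusion (``the deficit degrades only the constant $C_d^-$, not the base'') but provides no mechanism for it. Moreover, the perturbation-and-pigeonhole device does not repair this and is not needed: the claimed bound already tolerates a factor $(C_d^-)^{g^{(d-1)/d}}$, so a single well-chosen simplex multset suffices (as in the paper), while the $\exp(\Theta(g^{(d-1)/d}))$ multiplicity of perturbed shapes cannot be counted on to cancel the deficit penalty, since both effects live at the same subexponential scale with unrelated constants.
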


First, we give an exact formula for $n_{g,d,2}$ as a sum over the possible multsets $\mm$.

\begin{proposition}
  \label{prop:ngd2-form}
  We have
  \[n_{g,d,2} = \sum_{\mm} \binom{s_2(\mm) - \# \mm}{ g + 1 - s_1(\mm)},\]
  where the sum is over all multsets $\mm$ for which $s_1(\mm) \leq g+1$. 
\end{proposition}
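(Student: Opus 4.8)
The plan is to enumerate depth-2 GNSs of genus $g$ by first stratifying over their multset $\mm$, then counting how many GNSs of depth exactly 2 have that multset and the correct genus. Fix a multset $\mm$. By \Cref{cor:dumb}, a GNS $\Lambda^d$ with multset $\mm$ has depth at most 2 precisely when $\mathcal{R}_3(\mm) \subseteq \Lambda^d$; equivalently, the only freedom in building $\Lambda^d$ lies in deciding which points of $\mathcal{R}_2(\mm)$ (other than the forced elements of $\mm$) to include. Indeed, every point of $\mathcal{R}_1 = \mathcal{R}_0 \cup (\mathcal{R}_1 \setminus \{\bs0\})$ below the multset must be excluded (these are gaps by minimality of $\mm$), every point of $\mathcal{R}_{\geq 3}$ must be included, and the elements of $\mm$ themselves must be included. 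So a depth-$\leq 2$ GNS with multset $\mm$ is determined by a choice of subset of $\mathcal{R}_2(\mm) \setminus \mm$ to adjoin to $\Lambda^d$.

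The first thing I would verify carefully is exactly which points in $\mathcal{R}_2(\mm)$ are free. The set $\mm$ itself sits inside $\mathcal{R}_2$ (as noted in the proof of \Cref{cor:dumb}, $\mm \subseteq \mathcal{R}_2$) and its elements are forced to be in $\Lambda^d$. I claim the remaining $s_2(\mm) - \#\mm$ points of $\mathcal{R}_2(\mm) \setminus \mm$ may each be \emph{freely} included or excluded, subject only to the genus constraint, and that any such choice yields a genuine GNS (closed under addition) with multset exactly $\mm$. Closure is the key point to check: if $\bs{x}, \bs{y}$ are any two included nonzero points, I must show $\bs{x} + \bs{y} \in \Lambda^d$. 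If both lie in $\mathcal{R}_2$, then \Cref{prop:dumb-stuff} with $k = \ell = 2$ gives $\bs{x} + \bs{y} \notin \mathcal{R}_{\leq 2}$, i.e. $\bs{x}+\bs{y} \in \mathcal{R}_{\geq 3} \subseteq \Lambda^d$; the cases where one or both summands lie in $\mathcal{R}_{\geq 3}$ follow similarly since the sum lands even higher. One also checks the multset is preserved: adjoining points of $\mathcal{R}_2 \setminus \mm$ cannot create new minimal elements below $\mm$, since everything in $\mathcal{R}_1$ has been excluded.

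Granting this, counting is a binomial. The total number of gaps of such a $\Lambda^d$ is the number of excluded points: all of $\mathcal{R}_1 \setminus \{\bs0\}$ (there are $s_1(\mm) - 1$ of these) together with the chosen excluded subset of $\mathcal{R}_2 \setminus \mm$. To get genus exactly $g$, if we include $j$ of the $s_2(\mm) - \#\mm$ free points and exclude the other $(s_2(\mm) - \#\mm) - j$, the genus is
\[
g = (s_1(\mm) - 1) + \bigl( s_2(\mm) - \#\mm - j\bigr),
\]
so the number of excluded free points is determined, and hence the number of valid configurations is $\binom{s_2(\mm) - \#\mm}{\,g + 1 - s_1(\mm)\,}$, matching the claimed summand. (When $g + 1 - s_1(\mm)$ is negative or exceeds $s_2(\mm) - \#\mm$ the binomial vanishes, automatically discarding multsets that cannot realize genus $g$; restricting the sum to $s_1(\mm) \leq g+1$ just drops the trivially-zero terms.) Summing over all valid multsets $\mm$ (characterized by \Cref{prop:mult-char}) gives the formula, once I confirm that distinct multsets give disjoint families and that depth exactly 2 versus depth $\leq 2$ does not over/undercount—here I should note that $n_{g,d,2}$ as defined counts depth exactly 2, so strictly I am counting depth $\leq 2$; but depth $\leq 1$ forces $\Lambda^d = \NN_0^d$ (genus $0$), so for $g \geq 1$ the two agree.

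The main obstacle I anticipate is the closure verification in the second paragraph: proving that \emph{every} subset of $\mathcal{R}_2(\mm) \setminus \mm$ yields a legitimate submonoid with the prescribed multset, with no additional hidden constraints among the free points. \Cref{prop:dumb-stuff} handles the crucial sum-of-two-depth-2-points case cleanly, so the real work is just assembling the case analysis and confirming no free point is secretly forced.
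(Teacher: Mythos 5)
Your stratification by multset, the forced exclusions in $\mathcal{R}_1 \setminus \{\bs{0}\}$, the forced inclusions of $\mm$ and $\mathcal{R}_{\geq 3}$, the closure check via \Cref{prop:dumb-stuff}, and the binomial count are exactly the paper's proof; up to that point there is no difference. The genuine error is your final reconciliation step, where you claim that depth $\leq 1$ forces $\Lambda^d = \NN_0^d$ and hence genus $0$. This is false: for any multset $\mm$, the semigroup $\{\bs{0}\} \cup \{\bs{x} \in \NN_0^d : \bs{x} \geq \bs{m} \text{ for some } \bs{m} \in \mm\}$ has depth $1$ and genus $s_1(\mm) - 1$, which is positive whenever $\mm$ is not $\{\mathbf{e^d_1}, \dots, \mathbf{e^d_d}\}$. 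Thus depth-$1$ GNSs of genus $g$ exist for every $g$ (there are $p_{d-1}(g+1) = n_{g,d,1}$ of them, one for each shape of size $g+1$), and they are precisely what the terms with $s_1(\mm) = g+1$ in the sum produce: there the summand is $\binom{s_2(\mm) - \#\mm}{0} = 1$, nothing is excluded from $\mathcal{R}_2 \setminus \mm$, no gap lies in $\mathcal{R}_2$, and the resulting semigroup has depth $1$, not $2$. What your argument actually proves is that the displayed sum equals $n_{g,d,1} + n_{g,d,2}$, the number of GNSs of depth \emph{at most} $2$, which strictly exceeds $n_{g,d,2}$.

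To your credit, you put your finger on a real subtlety that the paper's own proof silently skips: the paper also treats every admissible excluded subset of $\mathcal{R}_2 \setminus \mm$ as yielding a depth-$2$ semigroup, which fails exactly for the empty choice. The statement as printed, with the sum over $s_1(\mm) \leq g+1$, inherits this defect; for instance, with $d = 2$ and $g = 2$ it evaluates to $0 + 4 + 3 = 7$, whereas Table~\ref{tab:ng2q} reports $n_{2,2,2} = 4$ and $n_{2,2,1} = 3$. The correct repair is not to argue that depth-$\leq 1$ examples cannot occur (they do), but to restrict the summation to multsets with $s_1(\mm) \leq g$: then $g + 1 - s_1(\mm) \geq 1$, so at least one point of $\mathcal{R}_2$ is a gap, forcing depth exactly $2$, and every depth-$2$ GNS arises exactly once this way. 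With that restriction both your argument and the paper's go through verbatim; note this discrepancy is harmless for \Cref{cor:upper}, since dropping terms only strengthens an upper bound.
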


\begin{proof}
  Say $\Lambda^d$ has depth 2 and multset $\mm$, so $\mathcal{R}_k \subset \Lambda^d$ for $k \geq 3$. The inclusion of each element of $\mathcal{R}_1$ is predetermined; $\bs{0}$ is included and all other elements are excluded. Moreover, every element of $\mm \subset \mathcal{R}_2$ is included in $\Lambda^d$ by definition. 

  Thus, it remains to choose $g+1-s_1$ elements from $\mathcal{R}_2 \setminus \mm$. We claim that any subset yields a valid $\Lambda^d$. Indeed, by~\Cref{prop:dumb-stuff} we have that if $\bs{x}, \bs{y} \in \mathcal{R}_2$, then $\bs{x} + \bs{y} \not\in \mathcal{R}_{\leq 2}$, so in fact $\mathcal{R}_2$ is sum-free. Hence, we have that $\Lambda^d$ is always closed under addition, and we have the desired claim. 
\end{proof}

It turns out we can bound $s_2$ linearly in terms of $s_1$, which in turns gives us an exponential upper bound in terms of $\rr_{2^d}$. To bound the error term, we use the following formulation of the \vocab{Loomis-Whitney inequality}, proved in 1949, which allows us to bound the sum of the entries of a $(d-1)$-dimensional partition along the coordinate hyperplanes.

\begin{theorem}[Loomis-Whitney {\cite[Thm.~2]{lw_1949}}]
  \label{lem:lw}
  For any set of points $S$ in $d$-space, let $S_i$ be the set of points obtained by projecting $S$ onto the $i$-th coordinate hyperplane. Then
  \[(\# S)^{d-1} \leq \prod_{i=1}^d (\# S_i).\]
\end{theorem}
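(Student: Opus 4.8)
The plan is to prove this discrete Loomis-Whitney inequality by induction on the dimension $d$, with Hölder's inequality controlling a sum over one-dimensional slices. The base case $d=2$ is immediate: each point of $S$ is determined by its two coordinates, which lie in $S_1$ and $S_2$ respectively, so $\#S \le (\#S_1)(\#S_2)$.

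For the inductive step I would slice $S$ along the last coordinate. Writing points as $(\bs{x}',t)$ with $\bs{x}' \in \ZZ^{d-1}$, set $\Sigma_t := \{\bs{x}' : (\bs{x}',t) \in S\} \subseteq \ZZ^{d-1}$, so that $\#S = \sum_t \#\Sigma_t$. Two facts drive the argument. First, applying the $(d-1)$-dimensional inductive hypothesis to each slice gives $(\#\Sigma_t)^{d-2} \le \prod_{i=1}^{d-1}\#(\Sigma_t)_i$, where $(\Sigma_t)_i$ is the projection of the slice forgetting coordinate $i$. Second, since $S_d = \bigcup_t \Sigma_t$ as subsets of $\ZZ^{d-1}$, we have $\#\Sigma_t \le \#S_d$ for every $t$; and because each lateral projection $S_i$ (for $i<d$) retains the last coordinate, its slice at height $t$ is exactly $(\Sigma_t)_i$ and these slices are disjoint across $t$, so $\sum_t \#(\Sigma_t)_i = \#S_i$.

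To combine these, I write $\#\Sigma_t = (\#\Sigma_t)^{1/(d-1)}(\#\Sigma_t)^{(d-2)/(d-1)}$, bound the first factor by $(\#S_d)^{1/(d-1)}$ and the second by $\bigl(\prod_{i=1}^{d-1}\#(\Sigma_t)_i\bigr)^{1/(d-1)}$ using the inductive hypothesis, and then apply the generalized Hölder inequality with $d-1$ equal exponents to the sum over $t$. This yields
\[\#S \le (\#S_d)^{1/(d-1)}\prod_{i=1}^{d-1}\Bigl(\sum_t \#(\Sigma_t)_i\Bigr)^{1/(d-1)} = \Bigl(\prod_{i=1}^{d}\#S_i\Bigr)^{1/(d-1)},\]
and raising both sides to the power $d-1$ gives the claim.

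The main obstacle is the bookkeeping that matches each full projection $S_i$ with the slicewise projections $(\Sigma_t)_i$: one must verify that the retained last coordinate keeps the slices of $S_i$ genuinely disjoint, so that $\sum_t \#(\Sigma_t)_i = \#S_i$ holds with equality, and that the Hölder exponents line up — one power of $\#\Sigma_t$ peeled off to pair with the shadow $S_d$, the remaining $d-2$ powers spread across the $d-1$ lateral shadows. As a slicker alternative avoiding the induction, the inequality follows in one line from Shearer's entropy inequality: letting $X$ be uniform on $S$ so that $H(X)=\ln\#S$, the marginal $X_{-i}$ omitting coordinate $i$ is supported on $S_i$ and so $H(X_{-i}) \le \ln\#S_i$; since each coordinate appears in exactly $d-1$ of the $d$ marginals, Shearer gives $(d-1)\ln\#S \le \sum_{i=1}^d H(X_{-i}) \le \sum_{i=1}^d \ln\#S_i$.
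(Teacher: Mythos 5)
Your proof is correct, but there is nothing in the paper to compare it against: the paper does not prove this statement at all, it simply quotes the discrete Loomis--Whitney inequality as a known result from the 1949 paper of Loomis and Whitney and uses it as a black box in the proof of Lemma 4.6 (the bound $s_2(\mm) \leq (2^d-1)s_1(\mm) - 2^{d-2}s_1(\mm)^{(d-1)/d}$). What you have written is the standard self-contained argument: the induction on $d$ with the slice decomposition $\#S = \sum_t \#\Sigma_t$, the identification $\#S_i = \sum_t \#(\Sigma_t)_i$ for $i<d$ (valid precisely because the retained last coordinate keeps the slices of $S_i$ disjoint, as you note), the crude bound $\#\Sigma_t \leq \#S_d$, and generalized H\"older with $d-1$ equal exponents; all the exponent bookkeeping checks out, and the base case $d=2$ is handled correctly. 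Your one-line alternative via Shearer's entropy inequality is also valid and is the other well-known route to this inequality. So your work supplies a proof where the paper deliberately outsources one; the paper's choice buys brevity and an attribution to the original source, while yours makes the result self-contained at the cost of importing either H\"older or Shearer as the underlying analytic tool.
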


\begin{lemma}
  \label{lem:r2-bound}
  For any multset $\mm$ and $d \geq 2$, we have $s_2(\mm) \leq (2^d-1)s_1(\mm) - 2^{d-2} s_1(\mm)^{(d-1)/d}$. 
\end{lemma}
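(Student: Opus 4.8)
The plan is to pass to the shape-partition picture, prove the clean main term $s_2 \le (2^d-1)s_1$ by a ``halving'' estimate, and then recover the sublinear correction by showing that the slack in that estimate is a boundary quantity controlled from below by the Loomis--Whitney inequality.

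First I would translate everything into the language of the shape partition $\pi := \pi^1(\mm)$, a $(d-1)$-dimensional partition with $|\pi| = s_1$, and its min-sum square $\pi^2 = \pi \boxplus \pi$ (\Cref{prop:k-sum}), which satisfies $|\pi^2| = s_1 + s_2$. Thus the claim is equivalent to $|\pi \boxplus \pi| \le 2^d s_1 - 2^{d-2} s_1^{(d-1)/d}$. For the main term I would use the single decomposition $\bs{v} = \floor{\bs{v}/2} + \ceil{\bs{v}/2}$ (taken coordinatewise) to get $(\pi^2)_{\bs{v}} \le \pi_{\floor{\bs{v}/2}} + \pi_{\ceil{\bs{v}/2}} \le 2\pi_{\floor{\bs{v}/2}}$, using that $\pi$ is order-reversing. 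Since each $\bs{u} \in \NN_0^{d-1}$ equals $\floor{\bs{v}/2}$ for exactly $2^{d-1}$ values of $\bs{v}$, summing yields $|\pi^2| \le 2\sum_{\bs{v}} \pi_{\floor{\bs{v}/2}} = 2^d s_1$, i.e. $s_2 \le (2^d-1)s_1$. Equivalently, viewing $\mathcal{R}_{\le 2}$ and $\mathcal{R}_{\le 1}$ as the Young diagrams of $\pi^2$ and $\pi$ in $\NN_0^d$, the map $\bs{x} \mapsto \floor{\bs{x}/2}$ sends $\mathcal{R}_{\le 2}$ into $\mathcal{R}_{\le 1}$ with fibers of size at most $2^d$, giving the same bound.

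The improvement comes from the slack in this map. Writing $E = \NN_0^d \setminus \mathcal{R}_{\le 1}$, one checks that $\mathcal{R}_{\le 2}$ is exactly the complement of the Minkowski sum $E + E$ (this is the inf-convolution identity behind $\pi^2 = \pi \boxplus \pi$). Consequently the deficit
\[ 2^d s_1 - |\pi^2| = \#\{\bs{x} \in \NN_0^d : \bs{x} \in E + E \text{ and } \floor{\bs{x}/2} \in \mathcal{R}_{\le 1}\} \]
measures precisely the points ``double-counted'' by halving. My plan is to bound this deficit below by $2^{d-2} s_1^{(d-1)/d}$ via \Cref{lem:lw}. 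Applying Loomis--Whitney to $\mathcal{R}_{\le 1}$ (a down-set of size $s_1$ in $\NN_0^d$, whose projection onto $\{x_i = 0\}$ coincides with its slice there), some coordinate projection has size at least $s_1^{(d-1)/d}$; fixing such a direction $i$ and letting $\Sigma$ be the corresponding top surface $\{\bs{r} \in \mathcal{R}_{\le 1} : \bs{r} + \bs{e_i^d} \notin \mathcal{R}_{\le 1}\}$, which projects bijectively onto that projection, gives $|\Sigma| \ge s_1^{(d-1)/d}$. For $\bs{r} \in \Sigma$ one has $\bs{r} + \bs{e_i^d} \in E$, and I would build from each such $\bs{r}$ a distinct deficit point by a doubling-plus-shift map of the form $\bs{r} \mapsto (\bs{r} + \bs{e_i^d}) + \bs{g}$ for a suitable minimal generator $\bs{g}$ of $E$: membership in $E+E$ is then immediate, while ensuring the image lies over $\mathcal{R}_{\le 1}$ under halving is what pins down the constant $2^{d-2}$.

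The hard part will be making this construction uniform over all shapes, since the deficit concentrates in genuinely different places depending on $\mm$. For simplex-like shapes it sits on the outer antidiagonal, where the clean map $\bs{r} \mapsto 2\bs{r} + \bs{1}$ works because such $\bs{r}$ exit $\mathcal{R}_{\le 1}$ in every coordinate direction (a step that needs $d \ge 2$ to split $\bs{1}$ across two directions); for box-like shapes the deficit instead fills the bulk above a single mixed corner $\bs{g}_1 + \bs{g}_2$ and must be reached through two distinct generators. Reconciling these regimes into one injection from a set of size $\ge s_1^{(d-1)/d}$ into the deficit set, while tightly controlling the floor/parity discrepancy so as not to lose the factor $2^{d-2}$, is the crux; the reduction and main-term steps above are routine.
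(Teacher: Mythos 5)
Your reduction to the shape partition, your main-term bound $s_2 \le (2^d-1)s_1$ via the single splitting $\bs v = \floor{\bs v/2}+\ceil{\bs v/2}$, and your identity expressing the deficit $2^d s_1-(s_1+s_2)$ as $\#\{\bs x \in E+E : \floor{\bs x/2}\in\mathcal{R}_{\le 1}\}$ are all correct. But the entire content of the lemma is the correction term $2^{d-2}s_1^{(d-1)/d}$, and that is precisely what your proposal does not prove: you explicitly defer ``the crux,'' namely an injection from a set of size at least $s_1^{(d-1)/d}$ into the deficit set that works uniformly over all multsets. The obstruction you name is genuine, not cosmetic. For a general $\mm$ there is no canonical generator $\bs g$ for which $\floor{(\bs r+\bs{e_i^d}+\bs g)/2}$ lies in $\mathcal{R}_{\le 1}$; your simplex device $\bs r \mapsto 2\bs r+(1,\dots,1)$ works only because for $\mm = \mm_k$ the set $E$ is a discrete half-space, so that $\bs r+(1,\dots,1)-\bs{e_i^d}$ is again in $E$, and this fails for general shapes (nor is injectivity clear once $\bs g$ must vary with $\bs r$). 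So as written, your argument establishes only $s_2\le(2^d-1)s_1$.

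The paper never needs such an injection, because it extracts the deficit at the very first step by averaging over \emph{all} $2^{d-1}$ floor/ceiling splittings rather than committing to one. For $S\subseteq[d-1]$, let $f_S(\bs v)$ round $v_i$ up for $i\in S$ and down otherwise, so that $f_S(\bs v)+f_{[d-1]\setminus S}(\bs v)=\bs v$ and, by \Cref{prop:k-sum},
\[(\pi^2)_{\bs v} \le \frac{1}{2^{d-1}}\sum_{S\subseteq[d-1]}\left((\pi^1)_{f_S(\bs v)}+(\pi^1)_{f_{[d-1]\setminus S}(\bs v)}\right) = \frac{1}{2^{d-2}}\sum_{S\subseteq[d-1]}(\pi^1)_{f_S(\bs v)}.\]
Summing over $\bs v$ and counting fibers, a point $\bs w$ with all coordinates positive arises as $f_S(\bs v)$ for exactly $4^{d-1}$ pairs $(\bs v,S)$, but a point with some zero coordinate for at most $3\cdot 4^{d-2}$ pairs; hence $s_1+s_2 \le 2^d s_1 - 2^{d-2}\sum_{\bs w\not\ge(1,\dots,1)}(\pi^1)_{\bs w}$ falls out with no further construction. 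That last sum counts the points of $\mathcal{R}_1$ lying on the coordinate hyperplanes $x_i=0$ with $i<d$, and after relabeling the axes so that the largest zero-slice avoids the height direction, \Cref{lem:lw} bounds it below by $s_1^{(d-1)/d}$ --- the same Loomis--Whitney input you intended, but applied to a quantity that the averaged bound produces automatically, with no parity or uniformity issues. To rescue your route you would have to show your deficit set has size at least $2^{d-2}s_1^{(d-1)/d}$ for every multset, which in effect amounts to redoing this averaging argument.
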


\begin{proof}
  We work with the $(d-1)$-dimensional partitions $\pi^1(\mm)$ and $\pi^2(\mm)$ of $s_1$ and $s_1+s_2$, respectively. For each point $\bs{v} \in \NN_0^{d-1}$ and subset $S \subseteq [d-1]$, let $f_S(\bs{v}) := (f_{S,1}(v_1), \dots, f_{S,d-1}(v_{d-1}))$, where
  \[f_{S,i}(v) = \begin{cases}
    \lceil v/2 \rceil & \text{if }i \in S, \\
    \lfloor v/2 \rfloor & \text{if }i \not\in S.
  \end{cases}\]
  By definition, we have $f_S(\bs{v}) + f_{[d-1]\setminus S}(\bs{v}) = \bs{v}$. Moreover, \Cref{prop:k-sum} tells us $\pi^2 = \pi^1 \boxplus \pi^1$, and so
  \begin{align*}
    (\pi^2)_{\bs v} &= \min_{\bs u + \bs{u'} = \bs v} \left( (\pi^1)_{\bs u} + (\pi^1)_{\bs u'} \right) \\
    &\leq \frac{1}{2^{d-1}} \sum_{S \subseteq [d-1]} \left( (\pi^1)_{f_S(\bs{v})} + (\pi^1)_{f_{[d]\setminus S}(\bs{v})} \right) \\
    &= \frac{1}{2^{d-2}} \sum_{S \subseteq [d-1]} (\pi^1)_{f_S(\bs{v})}; \\ \\
    s_1+s_2 &= \sum_{\bs v \in \NN_0^{d-1}} (\pi^2)_{\bs v} \\
    &\leq \sum_{\bs v \in \NN_0^{d-1}} \frac{1}{2^{d-2}} \sum_{S \subseteq [d-1]} (\pi^1)_{f_S(\bs{v})}.
\end{align*}
For each $i$, the $i$-th entry of $f_S(\bs{v})$ is determined by the choice of $v_i$ and whether or not $i$ is in $S$. If we fix the $i$-th entry to be $w_i$, there are exactly $3$ such choices that make $w_i = 0$ and $4$ choices if $w_i \geq 1$. Hence, for each $\bs{w}$ there are $4^{d-1}$ choices of $(\bs{v},S)$ for which $f_{S,i}(\bs{v}) = \bs{w}$ if $\bs{w} \geq (1,1,\dots,1)$ (i.e., if all entries are nonzero) and at most $3 \cdot 4^{d-2}$ otherwise. In particular,
\begin{align*}
s_1 + s_2 &\leq \frac{1}{2^{d-2}} \left( 4^{d-1} \sum_{\bs{w} \in \NN_0^{d-1}} (\pi^1)_{\bs{w}} - 4^{d-2} \sum_{\substack{\bs{w} \in \NN_0^{d-1} \\ \bs{w} \not\geq (1,1,\dots,1)}} (\pi^1)_{\bs{w}}\right)\\
&= 2^ds_1 - 2^{d-2} \cdot \sum_{\substack{\bs{w} \in \NN_0^{d-1} \\ \bs{w} \not\geq (1,1,\dots,1)}} (\pi^1)_{\bs{w}}.
\end{align*}
The summation is equal to the number of points in $\mathcal{R}_1$ that are on the $i$-th coordinate hyperplane for some $i < d$. Without loss of generality, say this is maximal for any ordering of the $d$ coordinate axes. Then by \Cref{lem:lw}, the summation is at least $\max_i (\mathcal{R}_1)_i \geq s_1^{(d-1)/d}$, so $s_1 + s_2$ is at most $2^d s_1 - 2^{d-2} s_1^{(d-1)/d}$, as desired.
\end{proof}

\begin{remark*}
  The choice of $\mm$ in the proof of~\Cref{lem:lower-bound} shows that $(2^d-1)s_1 - O\left( s_1^{(d-1)/d} \right)$ is the best possible bound. This multset is provably best for $d=2$; this is the \href{https://artofproblemsolving.com/community/c6h2976989p26685437}{{\color{blue}third problem}} of the Team Selection Test for the 2023 United States International Math Olympiad team, posed by the author. 
\end{remark*}

\begin{corollary}
  \label{cor:upper}
  For each $d$, there is a constant $C^+_d > 0$ for which $n_{g,d,2} < (C^+_d)^{g^{(d-1)/d}} \rr_{2^d}^{g}$.
\end{corollary}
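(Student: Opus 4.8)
The plan is to bound the sum in \Cref{prop:ngd2-form} term by term, extracting the growth factor $\rr_{2^d}$ from each binomial coefficient and packaging the remaining count of multsets into the subexponential prefactor. Write $N = 2^d$ and $r = \rr_N$, so that $r^N = (r+1)^{N-1}$ by definition. First I would discard the (nonnegative) quantity $\#\mm$ and the negative correction supplied by \Cref{lem:r2-bound}, using that $\binom{n}{k}$ is nondecreasing in $n$, to obtain
\[\binom{s_2(\mm)-\#\mm}{\,g+1-s_1(\mm)\,} \le \binom{(N-1)\,s_1(\mm)}{\,g+1-s_1(\mm)\,}.\]
The heart of the argument is then the uniform estimate $\binom{(N-1)s}{g+1-s} \le r^{g+1}$, valid for every $s \le g+1$ (the terms with $g+1-s > (N-1)s$ vanish, so this is no loss).

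To prove this estimate I would use the elementary tail bound $\binom{n}{k} \le (1+x)^n x^{-k}$, which holds for all $x > 0$ since $\binom{n}{k}x^k \le (1+x)^n$. Taking $x = 1/r$ gives
\[\binom{(N-1)s}{g+1-s} \le \left(1 + \tfrac1r\right)^{(N-1)s} r^{\,g+1-s}.\]
The defining relation $r^N = (r+1)^{N-1}$ is exactly what makes the first factor collapse: $(1+1/r)^{N-1} = (r+1)^{N-1}/r^{N-1} = r^N/r^{N-1} = r$, whence $(1+1/r)^{(N-1)s} = r^s$ and the product telescopes to $r^{g+1}$. This is the step where $\rr_{2^d}$ enters, and choosing the correct value $x = 1/\rr_{2^d}$ is the one genuinely clever point; everything else is bookkeeping.

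Finally I would group the sum in \Cref{prop:ngd2-form} by the shape size $s = s_1(\mm)$, which runs from $1$ to $g+1$. Since the shape $\pi^1(\mm)$ is a $(d-1)$-dimensional partition of $s$ and determines $\mm$ (the multset is recovered as the set of minimal elements of the up-set complementary to the downward-closed region $\mathcal{R}_{\leq 1}(\mm)$), the number of multsets with $s_1(\mm) = s$ is at most $p_{d-1}(s)$. \Cref{thm:abp} then bounds this by $\exp\!\left(k_{d-1}^+ s^{(d-1)/d}\right) \le \exp\!\left(k_{d-1}^+ (g+1)^{(d-1)/d}\right)$, so that
\[n_{g,d,2} \le \sum_{s=1}^{g+1} p_{d-1}(s)\, r^{g+1} \le (g+1)\exp\!\left(k_{d-1}^+ (g+1)^{(d-1)/d}\right) r^{g+1}.\]
It remains to absorb the factor $(g+1)$, the constant $r$, and the discrepancy between $(g+1)^{(d-1)/d}$ and $g^{(d-1)/d}$ into $(C_d^+)^{g^{(d-1)/d}}$. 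For $d \ge 2$ the exponent $(d-1)/d$ is positive, so $g^{(d-1)/d}$ grows faster than $\ln(g+1)$ and dominates the polynomial and constant factors; choosing $C_d^+$ large enough, and enlarging it to cover finitely many small $g$, yields the claim. The main obstacle beyond spotting $x = 1/r$ is keeping the multset count subexponential of the correct order $g^{(d-1)/d}$ — precisely the regime supplied by the Arora--Bhatia--Prasad estimate on $(d-1)$-dimensional partitions.
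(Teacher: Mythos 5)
Your proposal is correct for $d \ge 2$ and follows the paper's skeleton --- \Cref{prop:ngd2-form}, the reduction $s_2(\mm) - \#\mm \le (2^d-1)s_1(\mm)$ via \Cref{lem:r2-bound}, and the bound of $p_{d-1}(s)$ on the number of multsets of shape size $s$ controlled by \Cref{thm:abp} --- but it takes a genuinely different route at the one step where $\rr_{2^d}$ enters. The paper bounds the entire sum $\sum_{s=1}^{g+1}\binom{(2^d-1)s}{g+1-s}$ at once: it is the coefficient of $x^{g+1}$ in the rational function $1/\bigl(1-x(x+1)^{2^d-1}\bigr)$, whose coefficients satisfy a linear recurrence with characteristic polynomial $x^{2^d}-(x+1)^{2^d-1}$, and since $\rr_{2^d}$ is the unique root of largest modulus and is simple (the lemma in \Cref{sec:prelims}), the sum is $O(\rr_{2^d}^g)$ with no polynomial correction. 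You instead bound each term separately by the elementary estimate $\binom{n}{k}\le (1+x)^n x^{-k}$ at $x=1/\rr_{2^d}$, and the defining relation $\rr_{2^d}^{2^d}=(\rr_{2^d}+1)^{2^d-1}$ makes each term collapse to $\rr_{2^d}^{g+1}$. This is cleaner and more self-contained: it avoids both the generating-function identity and the root-location analysis, at the cost of an extra factor $g+1$ from summing over $s$.

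That extra factor is precisely where a gap appears. The corollary is stated for each $d$, including $d=1$, where $g^{(d-1)/d}=1$ and the claim reads $n_{g,1,2} < C_1^+\,\rr_2^g$; a factor of $g+1$ cannot be absorbed into a constant, so your argument does not cover this case (your own absorption step explicitly assumes $d\ge 2$). The loss is an artifact of the term-wise method, not of the statement: for $d=1$ the sum $\sum_{s}\binom{s}{g+1-s}$ is exactly a Fibonacci number, hence $\Theta(\rr_2^g)$, but the term-wise bound only sees $(g+1)\rr_2^{g+1}$. The paper's recurrence argument (or any partial-fractions estimate exploiting that $1/\rr_{2^d}$ is a simple pole of the generating function) removes the polynomial factor uniformly in $d$ and closes this case. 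So: for $d\ge 2$ your proof is complete and arguably simpler; to prove the corollary as stated you must either patch $d=1$ by the paper's method or restrict the claim to $d\ge 2$.
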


\begin{proof}
  Since $\pi^1$ is a $(d-1)$-dimensional partition, there are exactly $p_{d-1}(s)$ multsets $\mm$ with $s_1(\mm) = s$. Using~\Cref{lem:r2-bound}, we have
  \begin{align*}
    n_{g,d,2} = \sum_{s=1}^{g+1} \sum_{\substack{\mm \text{ multset} \\ s_1(\mm) = s}} \binom{ s_2(\mm) - \# \mm}{g+1 - s} 
    \leq \sum_{s=1}^{g+1} \sum_{\substack{\mm \text{ multset} \\ s_1(\mm) = s}} \binom{(2^d-1)s}{g+1 - s}
   \leq p_{d-1}(g+1) \cdot \sum_{s=1}^{g+1} \binom{(2^d-1)s}{g+1-s}.
  \end{align*}

  By~\Cref{thm:abp}, there is a constant $K$ for which $p_{d-1}(g+1) < K^{g^{(d-1)/d}}$. On the other hand, the summation is the coefficient of $x^{g+1}$ in the generating function
  \[\sum_{s=1}^{\infty} x^s(x+1)^{(2^d-1)s} = \frac{1}{1-x(x+1)^{2^d-1}}.\]
  The coefficients of the generating function follow a linear recurrence with characteristic polynomial $x^{2^d} - (x+1)^{2^d-1}$, and thus have growth $O(\rr_{2^d}^g)$. The result follows.
\end{proof}

It turns out that the constant $2^d-1$ in~\Cref{lem:r2-bound} is sharp, which suggests that we can choose a specific $\mm$ to give a sufficient lower bound on $n_{g,d,2}$. The proof gives us intuition for the near-equality cases: we should have $\pi^1_{\bs{u}} + \pi^1_{\bs{v}}$ be roughly constant for fixed $\bs{u} + \bs{v}$. We use a specific near-equality case to show the lower bound.

First, we need the following analytic lemmas.

\begin{definition}
  For a positive integer $k$, define the rational function
  \[F_k(x) := \frac{k^k(1-x) x^k}{ ( (k+1)x - 1)^{k+1}}\]
  and let $\cc_k$ be the largest positive root of $F_k(x) - 1 = 0$.
\end{definition}

\begin{lemma}
  The root $\cc_k$ is the unique real root of $F_k(x) -1$ larger than $1/(k+1)$.
\end{lemma}

\begin{proof}
  Let $f(x) = k^k(1-x)x^k$ and $g(x) = ( (k+1)x-1)^{k+1}$. The graph of $f$ has critical points at $x = 0, k/(k+1)$, and is strictly increasing on the interval $(0,k/(k+1))$ and decreasing on the interval $(k/(k+1),\infty)$. It is concave down for $x > (k-2)/k$.

  Meanwhile, the graph of $g$ has a single critical point at $x = 1/(k+1)$, and strictly increases thereafter; moreover, it is concave up.

  First, check that $f(1/(k+1)) > 0 = g(1/(k+1))$ and
  \[g\left( \frac{k}{k+1} \right) = (k-1)^{k+1} > \frac{k^{2k}}{(k+1)^k} = f\left( \frac{k}{k+1} \right).\]
  Since $f$ and $g$ are both increasing on the interval $(1/(k+1),k/(k+1))$, one concave up and the other concave down, the unique value of $x$ for which $f(x) = g(x)$ on this interval is $\cc_k$. Moreover, $g > f$ on $[k/(k+1), \infty)$, since $f$ is decreasing on that interval. We thus have the desired claim.
\end{proof}

\begin{definition}
  For positive integers $g$ and $x \leq g + 1$, define $G_k(x,g) := \binom{kx}{g+1-x}$. For fixed $g$, let $x = r_k(g)$ be the value of $x$ which maximizes $G_k(x,g)$.
\end{definition}

\begin{lemma}
  \label{lem:max-egf}
  We have $G_k(r_k(g),g) \geq K/g \cdot \rr_{k+1}^g$ for some constant $K > 0$.
\end{lemma}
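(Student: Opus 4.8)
The plan is to avoid locating the maximizer $r_k(g)$ or the threshold $\cc_k$ directly, and instead bound the maximal term from below by the \emph{average} of the full sum $\sum_s G_k(s,g)$, whose growth we already control via generating functions. First I would record, exactly as in the proof of \Cref{cor:upper}, that
\[
\sum_{s=1}^{g+1} G_k(s,g) = \sum_{s=1}^{g+1}\binom{ks}{g+1-s} = [x^{g+1}]\,\frac{1}{1-x(x+1)^k} =: a_{g+1},
\]
where the $s=0$ and $s>g+1$ terms vanish, so the displayed sum is precisely the coefficient $a_{g+1}$. Since there are at most $g+1$ summands, the maximal term dominates the average:
\[
G_k(r_k(g),g) = \max_{1\le s\le g+1} G_k(s,g) \ \geq\ \frac{1}{g+1}\sum_{s=1}^{g+1}G_k(s,g) = \frac{a_{g+1}}{g+1}.
\]
It therefore suffices to prove a matching lower bound $a_{g+1}\geq c\,\rr_{k+1}^{\,g+1}$ for a constant $c>0$.

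To get this, I would use that $(a_n)$ satisfies the linear recurrence with characteristic polynomial $x^{k+1}-(x+1)^k$, by the same computation appearing in \Cref{cor:upper}. By the lemma establishing that $\rr_{k+1}$ is well-defined, this polynomial has $\rr_{k+1}$ as its unique \emph{simple} root of largest magnitude, so standard rational-function asymptotics give $a_n\sim c\,\rr_{k+1}^{\,n}$ with $c=\dfrac{-N(x_0)}{x_0 D'(x_0)}$, where $D(x)=1-x(x+1)^k$ is the denominator, $N(x)=1$, and $x_0=1/\rr_{k+1}$ is its dominant (smallest-modulus) root, a simple zero with $N(x_0)=1\neq 0$.

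The only genuine content is checking $c>0$, i.e.\ that the coefficient of the dominant root does not vanish. Here I would differentiate directly:
\[
D'(x) = -(x+1)^{k-1}\big((k+1)x+1\big).
\]
Since $x_0=1/\rr_{k+1}>0$, both factors $(x_0+1)^{k-1}$ and $(k+1)x_0+1$ are positive, so $D'(x_0)<0$; combined with $x_0>0$ this gives $c=-1/(x_0 D'(x_0))>0$. Hence $a_{g+1}\geq \tfrac{c}{2}\,\rr_{k+1}^{\,g+1}$ for all large $g$, and substituting into the average bound yields $G_k(r_k(g),g)\geq \tfrac{c\,\rr_{k+1}}{2(g+1)}\rr_{k+1}^{\,g}\geq \tfrac{K}{g}\rr_{k+1}^{\,g}$; the finitely many small $g$ are absorbed into $K$, since $G_k(r_k(g),g)\geq G_k(g+1,g)=\binom{k(g+1)}{0}=1>0$ always.

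The main obstacle is exactly this nonvanishing of $c$: a nonnegative linear recurrence can in principle have zero projection onto its dominant eigenvector, so one cannot simply appeal to positivity, and it is the explicit sign computation of $D'(x_0)$ that rules this out. An alternative, more computational route would estimate a single term $G_k(s,g)$ at $s\approx \cc_k(g+1)$ by Stirling's formula; this recovers the same bound but requires separately verifying that the per-element growth rate $\tfrac{(k\alpha)^{k\alpha}}{(1-\alpha)^{1-\alpha}((k+1)\alpha-1)^{(k+1)\alpha-1}}$ attains the value $\rr_{k+1}$ at its maximizer $\alpha=\cc_k$, which is why I prefer the generating-function argument above.
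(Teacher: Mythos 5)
Your proof follows essentially the same route as the paper's: bound the maximal term $G_k(r_k(g),g)$ below by the average of $\sum_{s} G_k(s,g)$, and control that sum through the generating function $1/\bigl(1-x(x+1)^k\bigr)$, whose dominant simple pole at $1/\rr_{k+1}$ gives the growth rate $\rr_{k+1}^g$. If anything, your version is more complete: the paper only asserts $\sum_{s=1}^{g+1} G_k(s,g) = O(\rr_{k+1}^g)$ and implicitly relies on the matching lower bound for the sum, whereas you explicitly verify the positivity of the dominant-pole coefficient via the sign of $D'(x_0)$, which is precisely the detail needed for the averaging argument to yield a lower bound.
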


\begin{proof}
  By a standard characteristic polynomial argument as in~\Cref{cor:upper}, we have that \[\sum_{s=1}^{g+1} G_k(s,g) = O(\rr_{k+1}^g).\] But the left-hand side is at most $g \cdot G_k(r_k(g),g)$, which yields the desired bound.
\end{proof}

\begin{lemma}
  \label{lem:gk-converge}
Let $s(1), s(2), \dots$ be a sequence of integers with $s(g) \leq g + 1$. 
  As $g$ goes to infinity,
  \[\frac{G_k(s(g)+1,g)}{G_k(s(g),g)} \sim F_k(s(g)/g).\]
\end{lemma}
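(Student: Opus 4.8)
The plan is to evaluate the ratio exactly as a quotient of factorials and then extract its leading-order behavior. Writing $a := ks$ and $b := g+1-s$ with $s := s(g)$, we have $G_k(s,g) = \binom{a}{b}$ and $G_k(s+1,g) = \binom{a+k}{b-1}$. Expanding both as factorials and cancelling, the ratio telescopes to
\[\frac{G_k(s+1,g)}{G_k(s,g)} = b \cdot \frac{\prod_{i=1}^{k}(a+i)}{\prod_{j=1}^{k+1}(a-b+j)},\]
a product of only finitely many (namely $1 + k + (k+1)$) linear factors, each of which I can estimate separately. This is the essential simplification: it turns a ratio of binomial coefficients with both arguments varying into a bounded-length product.

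Next I would substitute back $a = ks$, $b = g+1-s$, and $a-b = (k+1)s - g - 1$, and set $t := s(g)/g$. Since $G_k(s,g) \neq 0$ already forces $g+1-s \leq ks$, i.e. $s \geq (g+1)/(k+1)$, we automatically have $ks \to \infty$, so $\prod_{i=1}^{k}(a+i) = (ks)^k(1 + o(1))$; likewise $\prod_{j=1}^{k+1}(a-b+j) = ((k+1)s - g)^{k+1}(1+o(1))$ and $b = (g-s)(1+o(1))$. Collecting these and rewriting everything in terms of $t$, the powers of $g$ cancel ($g^{k+1}$ appears in both numerator and denominator), leaving
\[\frac{G_k(s+1,g)}{G_k(s,g)} = \frac{k^k t^k (1-t)}{((k+1)t - 1)^{k+1}}(1 + o(1)) = F_k\!\left( \frac{s(g)}{g} \right)(1 + o(1)),\]
which is exactly the stated claim, matching the definition of $F_k$.

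The step requiring care, and the main obstacle, is justifying that the finitely many $(1+o(1))$ factors are genuinely $o(1)$, i.e. controlling the degenerate regimes. The factor $(a+i)/(ks) = 1 + i/(ks)$ is harmless because $ks \to \infty$ always holds when $G_k(s,g) \neq 0$, but $(a-b+j)/(a-b) = 1 + j/(a-b)$ and $b/(g-s) = 1 + 1/(g-s)$ tend to $1$ only when $(k+1)s(g) - g \to \infty$ and $g - s(g) \to \infty$. These are precisely the nondegeneracy conditions under which both sides are positive and finite; they fail at the boundary values $t = 1/(k+1)$ (where $F_k \to \infty$) and $t = 1$ (for instance at $s(g) = g$ the left side is of order $1/(kg)$ while $F_k(1) = 0$, so $\sim$ breaks). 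I would therefore read the asymptotic under the understanding that $s(g)/g$ eventually lies in a compact subinterval of $(1/(k+1),1)$, which is the only regime in which the lemma is applied: in practice $s(g)$ is taken near the maximizer $r_k(g)$, whose normalized value converges to the interior point $\cc_k \in (1/(k+1),1)$ by the earlier analysis of $F_k$. Under that hypothesis one has $(k+1)s - g = \Theta(g)$ and $g - s = \Theta(g)$, so every error factor is uniformly $1 + O(1/g)$, and taking the product over the $2k+2$ factors keeps the total error $1 + O(1/g)$, completing the argument.
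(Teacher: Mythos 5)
Your proof is correct and follows essentially the same route as the paper's: both expand the ratio $G_k(s(g)+1,g)/G_k(s(g),g)$ exactly into a product of finitely many linear factors and then normalize by $g^{k+1}$ to recognize $F_k(s(g)/g)$. Your caveat about degenerate regimes (e.g.\ $s(g)=g$, where the stated $\sim$ literally fails since $F_k(1)=0$ while the left side is of order $1/(kg)$) flags a real subtlety that the paper's one-line proof glosses over; as you note, the lemma is only ever applied with $s(g)/g$ near $\cc_k$, safely inside a compact subinterval of $(1/(k+1),1)$, where the argument is airtight.
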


\begin{proof}
  Abbreviate $s := s(g)$. Rewrite
  \begin{align*}
    \frac{G_k(s+1,g)}{G_k(s,g)} = \frac{(g+1-s)\prod_{i=1}^k (ks+i)}{\prod_{i=0}^k ( (k+1)s - g + i)} = \frac{(1-s/g+1/g)\prod_{i=1}^k (ks/g+i/g)}{\prod_{i=0}^k ( (k+1)s/g - 1 + i/g)}.
  \end{align*}
  Then the right-hand side tends to $F_k(s/g)$, as desired.
\end{proof}

\begin{corollary}
  \label{lem:bin-const}
  As $g$ approaches infinity, the ratio $r_k(g)/g$ approaches $\cc_k$.
\end{corollary}

\begin{proof}
  Abbreviate $r := r_k(g)$. Our choices of $r$ dictate that
  \[\frac{G_k(r+1,g)}{G_k(r,g)} < 1 \quad \text{and} \quad \frac{G_k(r,g)}{G_k(r-1,g)} > 1.\]

  For large $g$, the two quantities both approach $F_k(r/g)$.
  Hence, by the squeeze theorem, we have that $F_k(r/g)$ approaches $1$, so $r/g$ approaches some root of $F_k$. We must have $kr > g-r$ or $r/g > 1/(k+1)$, so the only possibility is for $r/g$ to approach $\cc_k$, as desired.
\end{proof}

\begin{lemma}
  \label{lem:lower-bound}
  For each positive integer $d$, there exists $C^-_d > 0$ for which $n_{g,d,2} > (C^-_d)^{-g^{(d-1)/d}} \rr_{2^d}^g$.
\end{lemma}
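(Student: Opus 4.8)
The plan is to lower-bound $n_{g,d,2}$ by retaining a single, carefully chosen term of the exact formula in \Cref{prop:ngd2-form}. Motivated by the near-equality analysis of \Cref{lem:r2-bound}, I would take the multset whose shape is as ``linear'' as possible, namely the simplex multset $\mm = \mm_t$ defined by $\pi^1(\mm)_{\bs v} = \max(0,\, t - (v_1 + \dots + v_{d-1}))$; equivalently $\mathcal{R}_{\leq 1}(\mm) = \{\bs x \in \NN_0^d : x_1 + \dots + x_d < t\}$ and $\mm = \{\bs m \in \NN_0^d : m_1 + \dots + m_d = t\}$. This is a valid multset by \Cref{prop:mult-char}: the hyperplane slice is an antichain and contains $t\,\bs{e^d_i}$ for each $i$.

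For this family the relevant invariants are explicit. Since $\pi^2(\mm) = \pi^1(\mm) \boxplus \pi^1(\mm)$ by \Cref{prop:k-sum}, a direct min-sum computation gives $\pi^2(\mm)_{\bs v} = \max(0,\, 2t - (v_1 + \dots + v_{d-1}))$, so $\mathcal{R}_{\leq 2}(\mm) = \{\bs x : x_1 + \dots + x_d < 2t\}$ is the doubled simplex. Counting lattice points,
\[
  s_1 = \binom{t+d-1}{d}, \qquad s_1 + s_2 = \binom{2t+d-1}{d}, \qquad \#\mm = \binom{t+d-2}{d-1}.
\]
Comparing leading terms yields $s_2 = (2^d-1)s_1 - O\!\left(s_1^{(d-1)/d}\right)$ and $\#\mm = O\!\left(s_1^{(d-1)/d}\right)$, hence $s_2 - \#\mm = (2^d-1)s_1 - O\!\left(s_1^{(d-1)/d}\right)$; this is exactly the near-extremal case of \Cref{lem:r2-bound} (and for $d=2$ the sharp bound from the cited olympiad problem).

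Next I would calibrate the scale. Write $k := 2^d - 1$ and let $r := r_k(g)$ be the maximizer of $G_k(\cdot,g)$, so $r \sim \cc_k\, g$ by \Cref{lem:bin-const}. Because consecutive achievable values of $s_1(\mm_t)$ differ by $\binom{t+d-1}{d-1} = O\!\left(s_1^{(d-1)/d}\right) = O\!\left(g^{(d-1)/d}\right)$, I can choose $t = t(g)$ with $|s_1(\mm_t) - r| = O\!\left(g^{(d-1)/d}\right)$ and $s_1 \leq g+1$. Retaining only the $\mm_t$ term of \Cref{prop:ngd2-form} gives $n_{g,d,2} \geq \binom{s_2 - \#\mm}{\,g+1-s_1}$.

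It remains to compare this single binomial with the maximum $G_k(r,g) \geq (K/g)\,\rr_{2^d}^g$ of \Cref{lem:max-egf}, and the main work is checking that every discrepancy costs only a subexponential factor of the form $C^{-g^{(d-1)/d}}$. Writing the binomial as $\binom{k s_1 - \Delta}{\,g+1-s_1}$ with $\Delta = O\!\left(g^{(d-1)/d}\right)$, the ratio to $\binom{k s_1}{\,g+1-s_1} = G_k(s_1,g)$ is a product of $\Delta$ factors each bounded below by a constant $\gamma \in (0,1)$, hence at least $\gamma^{\Delta} \geq C_1^{-g^{(d-1)/d}}$; and by \Cref{lem:gk-converge} together with $|s_1 - r| = O\!\left(g^{(d-1)/d}\right)$, the ratio $G_k(s_1,g)/G_k(r,g)$ is a product of $O\!\left(g^{(d-1)/d}\right)$ step-ratios each equal to $1 + O(g^{-1/d})$, giving a loss of at most $\exp\!\big(O(g^{(d-2)/d})\big)$. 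Since $1/g$ is itself dominated by such a factor for $d \geq 2$, multiplying these bounds produces $n_{g,d,2} \geq (C_d^-)^{-g^{(d-1)/d}} \rr_{2^d}^g$. The hardest part is this last bookkeeping: confirming that moving $s_1$ off the exact maximizer and shrinking the top of the binomial each contribute only $\exp\!\big(O(g^{(d-1)/d})\big)$, rather than genuinely exponential, losses; the case $d=1$ is the classical one and can be treated separately.
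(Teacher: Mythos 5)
Your proposal follows essentially the same route as the paper's proof: the same hyperplane multset $\mm_t$, the same calibration of $s_1(\mm_t)$ against the maximizer $r_{2^d-1}(g)$ of $G_{2^d-1}(\cdot,g)$, and the same two-step comparison of binomials using \Cref{lem:gk-converge}, \Cref{lem:bin-const}, and \Cref{lem:max-egf}. The only differences are cosmetic: $\#\mm_t$ equals $\binom{t+d-1}{d-1}$ rather than $\binom{t+d-2}{d-1}$, and your claimed per-step ratio $1+O(g^{-1/d})$ (hence loss $\exp(O(g^{(d-2)/d}))$) is stronger than the rate-free lemmas directly justify, but the paper's softer uniform bound $(1-\eps)^{O(g^{(d-1)/d})}$ suffices in its place and does not affect correctness.
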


\begin{proof}
  Let $\mm_k$ denote the lattice points on the plane $x_1 + \dots + x_d = k$. Then $\mm_k$ is an antichain with points on the axes and is thus a valid multset, with
  \begin{align*}
    \mathcal{R}_1(\mm_k) &= \{ (x_1, \dots, x_d) : x_1 + \dots + x_d < k \}, \\
    \mathcal{R}_2(\mm_k) \setminus \mm_k &= \{ (x_1, \dots, x_d) : k < x_1 + \dots + x_d < 2k\},
  \end{align*}
  whose cardinalities are
  \[
  \begin{aligned}
    s_{1,k,d} &:= \binom{k+d-1}{d} = \frac{1}{d!} k^d + O(k^{d-1}), \\
    s_{2,k,d} &:= \binom{2k+d-1}{d} - \binom{k+d}{d} = \frac{1}{d!}(2^d-1) k^d + O(k^{d-1}).
  \end{aligned}
  \]
  Thus, there are $t^k_{g,d} := \binom{s_{2,k,d}}{g+1-s_{1,k,d}}$ GNSs with depth 2, genus $g$, and multset $\mm_k$. It suffices to show we can pick a $k$ for which $t^k_{g,d}$ exceeds the desired bound.

  Let $r := r_{2^d-1}(g)$. Since $s_{1,k,d}$ is a polynomial of degree $d$ in $k$, there is a constant $A > 0$ for which we can always select $k$ such that $0 < s_{1,k,d} - r < A \cdot k^{d-1}$. The idea is that we can approximate $t^k_{g,d}$ closely by $G_{2^d-1}(s_{1,k,d}, g)$, which in turn is close to $G_{2^d-1}(r,g)$, the last of which is large.

  Let $\eps > 0$ be a threshold which we later send to $0$. We first compare the quantities $t^k_{g,d}$ and $G_{2^d-1} (s_{1,k,d}, g)$. Let $D = (2^d-1)s_{1,k,d} - s_{2,k,d}$. Since $D$ is a polynomial of degree $d-1$ in $k$ with positive leading coefficient, we know $0 < D < B \cdot k^{d-1}$ for some $B > 0$. Note that
  \[\frac{t^k_{g,d}}{G_{2^d-1}(s_{1,k,d},g)} = \prod_{j = 1}^{D} \frac{2^d s_{1,k,d} - g - j}{(2^d-1)s_{1,k,d} + 1 -j} = \prod_{j=1}^D \frac{2^ds_{1,k,d}/g - 1 - j/g}{(2^d-1)s_{1,k,d}/g + 1/g - j/g}.\]
  Since $s_{1,k,d} - r \ll k^d$, we have that $s_{1,k,d}/g \sim r/g$ which tends to $\cc_{2^d-1}$ by~\Cref{lem:bin-const}. Hence, the right-hand side is bounded below by
  \[\left( \frac{2^d \cc_{2^d-1} - 1}{(2^d-1) \cc_{2^d-1}} - \eps \right)^D > \left( \frac{2^d \cc_{2^d-1}-1}{(2^d-1) \cc_{2^d-1}}-\eps\right)^{B \cdot k^{d-1}}\]
  for sufficiently large $g$.

  Now, we compare $G_{2^d-1}(s_{1,k,d},g)$ and $G_{2^d-1}(r,g)$. Note that
  \[\frac{G_{2^d-1}(s_{1,k,d},g)}{G_{2^d-1}(r,g)} = \prod_{x=r}^{s_{1,k,d}-1} \frac{G_{2^d-1}(x+1,g)}{G_{2^d-1}(x,g)}.\]
  Every factor in the product converges uniformly to $1$ by~\Cref{lem:gk-converge}, so in particular the right-hand side is at least $(1-\eps)^{s_{1,k,d}-r} > (1-\eps)^{A \cdot k^{d-1}}$ for sufficiently large $g$.

  In summary, we have
  \[t^k_{g,d} > \left( \frac{2^d \cc_{2^d-1} - 1}{(2^d-1)\cc_{2^d-1}} - \eps \right)^{B \cdot k^{d-1}} (1-\eps)^{A \cdot k^{d-1}} G_{2^d - 1} (r,g).\]
  But we have $G_{2^d-1}(r,g) > K/g \cdot \rr_{2^d-1}^g$ by~\Cref{lem:max-egf}, and moreover $k = O(g^{1/d})$ since $r = O(g)$ by~\Cref{lem:bin-const}. The result follows.
\end{proof}

Note that the proof can be modified slightly to produce an explicit value of $C_d$, but we do not do that here. \Cref{cor:upper} and~\Cref{lem:lower-bound} collectively imply~\Cref{thm:ngd2}, which in turn implies~\Cref{thm:gns-record}.

\subsection{Upper bound}

\label{sec:upper}

In this section, we provide the first upper bound on $n_{g,d}$ by proving~\Cref{thm:bad-upper}. In 2007, Bras-Amor\'os and de Mier~\cite{amoros_2007} gave the first upper bound on the number $n_g$ of numerical semigroups $\Lambda$ with genus $g$. They noted that if $n \not\in \Lambda$, then at least one of $(k, n-k)$ must also be excluded from $\Lambda$ for each $k$, and thus $g \geq (n+1)/2$. In particular, $f \not\in \Lambda$, so $f \leq 2g-1$. 

We use similar logic to develop an upper bound.

\begin{definition}
  For each $g$, let $\mathcal{A}_g := \{(x_1, \dots, x_d) \in \NN_0^d : \prod_{i=1}^d (x_i+1) \leq 2g\}$.
\end{definition}

\begin{proposition}
  \label{prop:up-bound}
  If $\Lambda^d$ is a GNS with dimension $d$ and genus $g$, then $\NN_0^d \setminus \Lambda^d \subseteq \mathcal{A}_g$.
\end{proposition}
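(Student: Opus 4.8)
The plan is to mimic the one-dimensional argument sketched just before the proposition, where one shows that if $n \notin \Lambda$ then for each decomposition $n = k + (n-k)$ at least one summand is also a gap, forcing roughly half of $\{0, 1, \dots, n\}$ to be gaps. Here the analogue of ``decompositions of $n$'' is the set of factorizations of a lattice point through the partial order. Concretely, suppose $\bs{x} = (x_1, \dots, x_d) \notin \Lambda^d$. For each index $i$, the segment $\{0, 1, \dots, x_i\}$ has $x_i + 1$ choices, so there are $\prod_{i=1}^d (x_i + 1)$ points $\bs{a}$ with $\bs{0} \leq \bs{a} \leq \bs{x}$. I would pair each such $\bs{a}$ with its complement $\bs{x} - \bs{a}$; since $\bs{a} + (\bs{x} - \bs{a}) = \bs{x} \notin \Lambda^d$ and $\Lambda^d$ is closed under addition, at least one of $\bs{a}, \bs{x} - \bs{a}$ must be a gap of $\Lambda^d$.

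The key counting step is then to extract a lower bound on the genus $g = \#(\NN_0^d \setminus \Lambda^d)$ from this pairing. The involution $\bs{a} \mapsto \bs{x} - \bs{a}$ on the box $B_\bs{x} := \{\bs{a} : \bs{0} \leq \bs{a} \leq \bs{x}\}$ is a fixed-point-free involution unless $\bs{x}$ has all coordinates even (in which case the unique fixed point is $\bs{x}/2$, which must itself be a gap since $2 \cdot (\bs{x}/2) = \bs{x} \notin \Lambda^d$). In every pair at least one element is a gap, so the number of gaps inside $B_\bs{x}$ is at least $\lceil \#B_\bs{x} / 2 \rceil \geq \tfrac12 \prod_{i=1}^d (x_i + 1)$. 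Since all these gaps lie in $B_\bs{x} \subseteq \NN_0^d$, they contribute to $g$, giving
\[
g \;\geq\; \#(\text{gaps in } B_\bs{x}) \;\geq\; \tfrac12 \prod_{i=1}^d (x_i + 1).
\]
Rearranging yields $\prod_{i=1}^d (x_i + 1) \leq 2g$, which is exactly the condition $\bs{x} \in \mathcal{A}_g$. Since $\bs{x}$ was an arbitrary gap, this shows $\NN_0^d \setminus \Lambda^d \subseteq \mathcal{A}_g$, as claimed.

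The main point requiring care is verifying that the pairing genuinely produces \emph{distinct} gaps counted within a single box, so that the factor of $\tfrac12$ is valid: I need the involution $\bs{a} \mapsto \bs{x} - \bs{a}$ to be well-defined on $B_\bs{x}$ (it is, since $\bs{0} \leq \bs{a} \leq \bs{x}$ forces $\bs{0} \leq \bs{x} - \bs{a} \leq \bs{x}$) and to handle the possible fixed point correctly, which the observation about $\bs{x}/2$ settles. I do not expect the inclusion of $\bs{0}$ to cause trouble, since $\bs{0} \in \Lambda^d$ means the pair $(\bs{0}, \bs{x})$ simply has its gap on the $\bs{x}$ side. Thus the entire argument reduces to a clean pigeonhole on the box $B_\bs{x}$, and the inequality $g \geq \tfrac12 \prod (x_i+1)$ falls out immediately.
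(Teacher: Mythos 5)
Your proof is correct and follows essentially the same route as the paper: pair each $\bs{a} \leq \bs{x}$ with $\bs{x} - \bs{a}$, note that closure under addition forces at least one element of each pair to be a gap, and conclude $g \geq \tfrac12 \prod_{i=1}^d (x_i+1)$. Your treatment of the involution's possible fixed point $\bs{x}/2$ is a careful detail the paper's proof leaves implicit, but the argument is identical in substance.
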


\begin{proof}
  Suppose $\bs{x} \not\in \Lambda^d$. Then for each $\bs{a} \leq \bs{x}$, one of $(\bs{a}, \bs{x}-\bs{a})$ must be also excluded from $\Lambda^d$. In particular, we must exclude at least half of the elements in the prism of points bounded by $\bs{x}$, which contains $\prod_{i=1}^{d}(x_i+1)$ elements. Hence, $g \geq \frac12 \prod_{i=1}^d(x_i+1)$, as desired.
\end{proof}

We now establish our upper bound.

\begin{proof}[Proof of~\Cref{thm:bad-upper}]
   Note that we have
  \[\#\mathcal{A}_g = \sum_{0\leq x_1, \dots, x_{d-1} < 2g} \left\lfloor \frac{2g}{\prod_{i=1}^{d-1} (x_i+1)} \right\rfloor \leq \sum_{0\leq x_1, \dots, x_{d-1} < 2g} \frac{2g}{\prod_{i=1}^{d-1} (x_i+1)} = 2gH_{2g}^{d-1}.\]
  Hence, if $\Lambda^d$ has genus $g$, we choose to exclude $g$ elements from $\mathcal{A}_g$, and thus
  \begin{align*}
    n_{g,d} \leq \binom{2g H_{2g}^{d-1}}{g} \leq \frac{1}{g!} (2g H_{2g}^{d-1})^g.
  \end{align*}
  Stirling's approximation tells us $g! > \left( (1/e - o(1)) g \right)^g$ for sufficiently large $g$, while we have the bound $H_{2g} \leq \ln (2g) + 1$ on the harmonic sum. The result follows. 
\end{proof}

\section{Partition labelings}

\label{sec:part}

In this section, we define \vocab{partition labelings} that somewhat generalize the notion of Kunz words to generalized numerical semigroups.

Given a numerical semigroup $\Lambda$ with multiplicity $m$, recall the \vocab{Kunz word} $\mathcal{K}(\Lambda)$ is the word $w_1 \cdots w_{m-1}$, where $m \cdot w_i + i$ is the least element in $\Lambda$ congruent to $i \pmod{m}$. One can easily recover $\Lambda$ from $\mathcal{K}(\Lambda)$, and we can read off many invariants of $\Lambda$, such as its genus and depth, from its Kunz word. 

We say a word $w_1 \cdots w_{m-1}$ is a \emph{valid Kunz word} if it is the Kunz word of some numerical semigroup. In 1987, Kunz showed the valid Kunz words are exactly those which follow certain additive inequalities known as the \vocab{Kunz conditions}:

\begin{proposition}[{Kunz~\cite[$\S2$]{kunz_1987,rosales_2009}}]
  A word $w_1 \cdots w_{m-1}$ is a valid Kunz word if and only if:
  \begin{itemize}
   \item $w_i + w_j \geq w_{i+j}$ for all $i,j$ with $i+j < m$;
    \item $w_i + w_j + 1 \geq w_{i+j-m}$ for all $i,j$ with $i+j \geq m+1$.
  \end{itemize}
\end{proposition}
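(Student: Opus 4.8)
The plan is to reduce both directions of the equivalence to the single fact that a numerical semigroup is closed under addition, applied to its Ap\'ery elements. First I would record the basic dictionary: writing $a_i := m w_i + i$ for the least element of $\Lambda$ in residue class $i \pmod m$ (and extending the word by $w_0 := 0$, so $a_0 = 0$), the fact that $m \in \Lambda$ means $\Lambda$ is stable under adding $m$, so each residue class meets $\Lambda$ in an up-set. Consequently $mt + i \in \Lambda$ if and only if $t \geq w_i$. This translation between membership and the size of the quotient is what makes the additive inequalities appear, and I would state it as a preliminary observation before either direction.

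For necessity, I would assume $w_1 \cdots w_{m-1}$ comes from a semigroup $\Lambda$ of multiplicity $m$ and simply add Ap\'ery elements. Fixing $i, j \in \{1, \dots, m-1\}$, closure gives $a_i + a_j \in \Lambda$. When $i + j < m$ I would rewrite $a_i + a_j = m(w_i + w_j) + (i+j)$ and read off from the dictionary that its quotient $w_i + w_j$ is at least the threshold $w_{i+j}$; when $i + j \geq m+1$ I would instead rewrite $a_i + a_j = m(w_i + w_j + 1) + (i+j-m)$ to obtain $w_i + w_j + 1 \geq w_{i+j-m}$. The case $i + j = m$ produces a multiple of $m$ and hence no constraint, and the cases with $i$ or $j$ zero are vacuous.

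For sufficiency, I would run this in reverse: given a word satisfying the two inequalities, define $\Lambda := \{ mt + i : 0 \leq i \leq m-1, \ t \geq w_i \}$ and verify it is a numerical semigroup whose Kunz word is the given one. Cofiniteness and $0 \in \Lambda$ are immediate from the definition. The only nontrivial point is closure: for $x = ms + i$ and $y = mt + j$ in $\Lambda$, I would use $s \geq w_i$ and $t \geq w_j$ together with the appropriate inequality to show the quotient of $x + y$ clears the threshold of its residue class, splitting again into $i + j < m$ and $i + j \geq m+1$ (the boundary $i+j = m$ and the cases $i = 0$ or $j = 0$ being trivial). Crucially, monotonicity lets me reduce to the minimal case $s = w_i$, $t = w_j$, since $s + t \geq w_i + w_j$; so the Ap\'ery-level inequalities suffice for closure of all of $\Lambda$.

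The main thing to be careful about is not a deep obstacle but a positivity subtlety: for the constructed $\Lambda$ to have multiplicity exactly $m$ --- and hence for the given word to be genuinely its Kunz word rather than the Kunz word of a smaller-multiplicity semigroup --- one needs $w_i \geq 1$ for each $i$, which rules $1, \dots, m-1$ out of $\Lambda$. I would flag that this positivity is implicit in the definition of a length-$(m-1)$ Kunz word and should be carried alongside the two displayed inequalities. Beyond that, the only work is bookkeeping of residues and the handful of edge cases, all of which follow mechanically from the membership dictionary.
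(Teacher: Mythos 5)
Your proof is correct, but note that the paper does not actually prove this proposition: it is quoted with a citation to Kunz and to Rosales--Garc\'ia-S\'anchez, so there is no in-paper argument to compare against line by line. Your argument is the standard one, and it is also precisely the scheme the paper uses for its \emph{generalization} of this statement, the characterization of valid partition labelings (the theorem asserting $L^{\bs{x}} \boxplus L^{\bs{y}} \geq \operatorname{sh}_X(L^{\bs{z}})$): first a membership dictionary ($mt+i \in \Lambda$ iff $t \geq w_i$, valid because $m \in \Lambda$ makes each residue class an up-set), then the observation that closure under addition is the only nontrivial axiom, then reduction of closure to the minimal representatives of each class, splitting on whether $i+j$ is less than, equal to, or greater than $m$. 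So your route coincides with the paper's method in the case $d=1$; nothing is different in substance. One point in your favor: the positivity caveat you flag is genuine. Under the paper's definition the Kunz word is taken relative to the multiplicity $m$, so a word with some $w_i = 0$ (e.g.\ the all-zero word) satisfies both displayed inequalities yet is the Kunz word of no multiplicity-$m$ semigroup; the proposition is only correct if ``word'' implicitly means entries $w_i \geq 1$, in which case your constructed $\Lambda = \{mt + i : 0 \leq i \leq m-1,\ t \geq w_i\}$ has multiplicity exactly $m$ and recovers the given word. Carrying that hypothesis explicitly, as you do, closes the only gap in the statement as written.
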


These properties make Kunz words a powerful tool to study numerical semigroups. For more on Kunz words, see~\cite{li_2022, zhu_2022}.

In the setting of generalized numerical semigroups, we generalize Kunz words to an object that we call a \vocab{partition labeling}, formed by labeling the elements of $\mathcal{R}_1$ with certain $(d-1)$-dimensional partitions. To begin, we make the following definition.

\begin{definition}
  For a multset $\mm$ and index $i=1,\dots,d$, let $m_i(\mm)$ be the integer $m_i$ for which $m_i \cdot \mathbf{e^d_i} \in \mm$. The \vocab{volume} $V(\mm)$ of a multset is the quantity $\prod_{i=1}^d m_i$.
\end{definition}

Recall from~\Cref{prop:mult-char} that such an $m_i$ exists and is unique, so our definition is sound. 

\begin{definition}
  Given a GNS $\Lambda^d$, the \vocab{partition labeling} $\mathscr{L}(\Lambda^d)$ of $\Lambda^d$ is composed of the points $\bs{x} \leq (m_1-1, \dots, m_d-1)$, each labeled with a partition $L^\bs{x}$ as follows:
  \begin{itemize}
    \item if $\bs{x} \in \Lambda^d$, then $\bs{x}$ is labeled with the empty (zero) partition;
    \item if $\bs{x} \not\in \Lambda^d$, then $\bs{x}$ is labeled with the partition $L^{\bs{x}}$, where
      \[(L^{\bs{x}})_{\bs{v}} = \min \{ \ell : \bs{x} + (m_1v_1, \dots, m_{d-1}v_{d-1}, m_d \ell) \in \Lambda^d\}.\]
  \end{itemize}
\end{definition}

We elucidate the above definition with an example.

\begin{example}
  As in~\Cref{ex:gns-2}, let
  \[\Lambda^2 = \NN_0^2 \setminus \{(0,1),(0,3),(1,0),(1,1),(1,2),(2,0), (3,1), (3,3), (4,1)\}.\]
  Then we have $m_1 = 3$ and $m_2 = 2$.

  We first compute the entries of $L^{(0,1)}$. To compute $\left(L^{(0,1)}\right)_0$, we wish to find the least $\ell$ for which
  \[(0,1) + (3 \cdot 0, 2 \cdot \ell) \in \Lambda^2.\]
  This is false for $\ell=0,1$, but true for $\ell = 2$. Hence, $\left( L^{(0,1)} \right)_0 = 2$.

  Similarly, to compute $\left( L^{(0,1)} \right)_1$, we wish to find the least $\ell$ for which
  \[(0,1) + (3 \cdot 1, 2 \cdot \ell) \in \Lambda^2.\]
  Once again, this is false for $\ell=0,1$ but true for $\ell = 2$. Thus, $\left( L^{(0,1)} \right)_1 = 2$ as well.

  Continuing in this fashion, we get that $\mathscr{L}(\Lambda^d)$ is as follows:
  \begin{align*}
    L^{(0,1)} &= [2,2], \qquad L^{(1,1)} = [1,1], \qquad L^{(2,1)} = [\phantom{1,1}] \\
    L^{(0,0)} &= [\phantom{2,2}], \qquad L^{(1,0)} = [\phantom{1}2\phantom{1}], \qquad L^{(2,0)} = [\phantom{1}1\phantom{1}].
  \end{align*}
  This is depicted (perhaps more intuitively) with Young diagrams in~\Cref{fig:pl}. For instance, to recover $L^{(0,1)}$, we look at the top left corner of each of the bolded $3 \times 2$ boxes on the left-hand side of~\Cref{fig:pl} and see which elements are omitted from $\Lambda^2$.
\end{example}

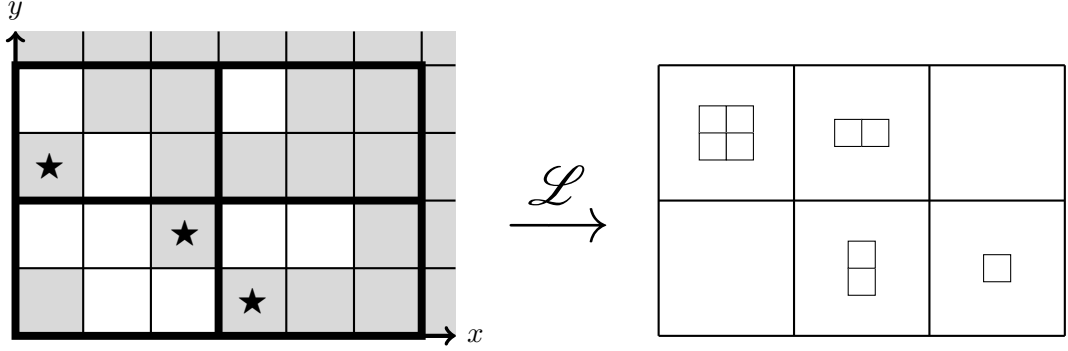
\begin{figure}
  \begin{tikzpicture}[scale=0.9]
    \fill[gray!30!white] (0,0)--(6.5,0)--(6.5,4.5)--(0,4.5)--cycle;
    \foreach \xx/\yy in {0/1,0/3,1/0,1/1,1/2,2/0,3/1,3/3,4/1} {
    \fill[white] (\xx,\yy)--(\xx+1,\yy)--(\xx+1,\yy+1)--(\xx,\yy+1)--cycle;
    }
    \draw[ultra thick,->] (0,0)--(6.5,0) node[right] {$x$};
    \draw[ultra thick,->] (0,0)--(0,4.5) node[above] {$y$};
    
    \foreach \i in {0,...,6} {
    \draw[thick] (\i,0)--(\i,4.5);
    }
    \foreach \i in {0,...,4} {
    \draw[thick] (0,\i)--(6.5,\i);
    }
    \foreach \xx/\yy in {0/2,2/1,3/0} {
    \node at (\xx+0.5,\yy+0.5) {\large $\bigstar$}; 
    }
    \draw[line width=3pt] (0,0)--(6,0)--(6,4)--(0,4)--cycle;
    \draw[ line width=3pt] (3,0)--(3,4);
    \draw[ line width=3pt] (0,2)--(6,2);

    \node at (8,2) {\Huge $\stackrel{\mathscr{L}}{\longrightarrow}$};

    \foreach \i in {0,...,3} {
    \draw[thick] (9.5+2*\i,0)--(9.5+2*\i,4);
    }
    \foreach \i in {0,...,2} {
    \draw[thick] (9.5,2*\i)--(15.5,2*\i);
    }
    \filldraw[color=black, fill=white] (14.3,0.8)--(14.3,1.2)--(14.7,1.2)--(14.7,0.8)--cycle;
    \filldraw[color=black, fill=white] (12.3,0.6)--(12.3,1.0)--(12.7,1.0)--(12.7,0.6)--cycle;
    \filldraw[color=black, fill=white] (12.3,1)--(12.3,1.4)--(12.7,1.4)--(12.7,1)--cycle;
    \filldraw[color=black, fill=white]
    (12.1,2.8)--(12.1,3.2)--(12.5,3.2)--(12.5,2.8)--cycle;
    \filldraw[color=black, fill=white]
    (12.5,2.8)--(12.5,3.2)--(12.9,3.2)--(12.9,2.8)--cycle;

    \filldraw[color=black, fill=white]
    (10.1,3.0)--(10.1,3.4)--(10.5,3.4)--(10.5,3.0)--cycle;
    \filldraw[color=black, fill=white]
    (10.5,3.0)--(10.5,3.4)--(10.9,3.4)--(10.9,3.0)--cycle;

    \filldraw[color=black, fill=white]
    (10.1,3.0-0.4)--(10.1,3.0)--(10.5,3.0)--(10.5,3.0-0.4)--cycle;
    \filldraw[color=black, fill=white]
    (10.5,3.0-0.4)--(10.5,3.0)--(10.9,3.0)--(10.9,3.0-0.4)--cycle;
  \end{tikzpicture}
  \caption{Partition labeling of $\Lambda^2$.}
  \label{fig:pl}
\end{figure}

We first check that the $L^\bs{x}$ are indeed partitions.

\begin{proposition}
  If $\bs{u} \leq \bs{v}$, then $(L^{\bs{x}})_{\bs{u}} \geq (L^\bs{x})_\bs{v}$.
\end{proposition}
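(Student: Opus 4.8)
The plan is to exploit the monoid structure of $\Lambda^d$. The crucial observation is that each generator $m_i\bs{e^d_i}$ lies in $\mm \subseteq \Lambda^d$ --- this is exactly the content of \Cref{prop:mult-char} --- so by closure under addition, translating any point of $\Lambda^d$ by a nonnegative integer combination of the $m_i\bs{e^d_i}$ yields another point of $\Lambda^d$. This is the only structural fact the argument needs, and everything else is bookkeeping with the definition of the label.

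Concretely, I would fix $\bs{x}$ and abbreviate $\ell := (L^{\bs{x}})_{\bs{u}}$. By definition of the label, the point $\bs{p} := \bs{x} + (m_1 u_1, \dots, m_{d-1}u_{d-1}, m_d\ell)$ belongs to $\Lambda^d$. Since $\bs{u} \leq \bs{v}$ in $\NN_0^{d-1}$, each difference $v_i - u_i$ is a nonnegative integer, so
\[
\bs{x} + (m_1 v_1, \dots, m_{d-1}v_{d-1}, m_d\ell) = \bs{p} + \sum_{i=1}^{d-1}(v_i - u_i)\, m_i\bs{e^d_i}
\]
is obtained from $\bs{p}$ by adding a nonnegative combination of elements of $\Lambda^d$. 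By the closure just noted, this translated point again lies in $\Lambda^d$.

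Finally, this exhibits $\ell$ as an admissible value in the minimization defining $(L^{\bs{x}})_{\bs{v}}$, whence $(L^{\bs{x}})_{\bs{v}} \leq \ell = (L^{\bs{x}})_{\bs{u}}$, which is precisely the claim. I do not anticipate any genuine obstacle: the entire argument rests on the single monoid-closure fact, and the only point that requires care is confirming $m_i\bs{e^d_i} \in \Lambda^d$ for each relevant $i$, which \Cref{prop:mult-char} supplies. One should also note in passing that the claim is vacuous when $\bs{x} \in \Lambda^d$ (the label is then the zero partition, trivially weakly decreasing), so the substance is entirely in the case $\bs{x} \notin \Lambda^d$ treated above.
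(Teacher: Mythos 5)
Your proof is correct and is essentially the paper's argument: both rest on the single fact that adding $\sum_{i}(v_i-u_i)\,m_i\bs{e^d_i}$ to a point of $\Lambda^d$ stays in $\Lambda^d$, since each $m_i\bs{e^d_i} \in \mm \subseteq \Lambda^d$ and $\Lambda^d$ is closed under addition. The paper merely packages the same translation as a proof by contradiction (assuming $(L^{\bs{x}})_{\bs{u}} < (L^{\bs{x}})_{\bs{v}}$), whereas you run it directly to exhibit $(L^{\bs{x}})_{\bs{u}}$ as an admissible value in the minimum defining $(L^{\bs{x}})_{\bs{v}}$.
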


\begin{proof}
  Suppose $(L^\bs{x})_{\bs{u}} < (L^{\bs{x}})_{\bs{v}}$. Then we have that $\bs{a} := \bs{x} + (m_1u_1, \dots, m_{d-1}u_{d-1} + m_d(L^{\bs{x}})_{\bs{v}})$ is in $\Lambda^d$, but $\bs{b} := \bs{x} + (m_1v_1, \dots, m_{d-1}v_{d-1} + m_d (L^{\bs{x}})_{\bs{v}})$ is not. However, since $m_i\mathbf{e^d_i}$ is in $\Lambda^d$, we have that $\bs{a} + (m_1(v_1-u_1), \dots, m_{d-1}(v_{d-1}-u_{d-1}), 0) = \bs{b}$ is in $\Lambda^d$, which is a contradiction.
\end{proof}

Note we can easily recover $\Lambda^d$ from $\mathscr{L}(\Lambda^d)$, so $\mathscr{L}$ is a map from GNSs to valid partition labelings. Thus, we naturally investigate which partition labelings give rise to valid GNSs. We first offer the following operation on partitions, which will help us extend the one-dimensional Kunz condition $w_x + w_y + 1 \geq w_{x+y-m}$ to the general case.

\begin{definition}
  Given a $(d-1)$-dimensional partition $\pi$ and a subset $X \subseteq [d]$, let $\operatorname{sh}_X(\pi)$ be the partition $\operatorname{sh}_X$ given by
  \[(\operatorname{sh}_X)_{\bs{v}} = \begin{cases}
    \pi_{\bs{v}(X)} & \text{if } d \not\in X, \\
    \max\left(\pi_{\bs{v}(X)}-1,0\right) & \text{if } d \in X,
  \end{cases}\]
  where $\bs{v}(X) := \bs{v} + \sum_{i \in X \cap [d-1]} \mathbf{e^{d-1}_i}$.
\end{definition}

One can think of $\operatorname{sh}_X$ as a geometric operation on multi-dimensional Young diagrams that ``shaves off'' the blocks along the hyperplanes indexed by $X$.

\begin{example}
  The one-dimensional partition $\pi = [4,3,2,2,1,1]$ has 
  \begin{align*}
    \operatorname{sh}_{\{1\}}(\pi) &= [3,2,2,1,1],\\
    \operatorname{sh}_{\{2\}}(\pi) &= [3,2,1,1],\\
    \operatorname{sh}_{\{1,2\}}(\pi) &= [2,1,1].
  \end{align*}
  \Cref{fig:shave} shows the Young diagrams of these partitions in relation to each other.

\begin{figure}
\begin{center}
  \begin{tikzpicture}[scale=1.1]
    \foreach \xx/\yy in {0/0,0/1,0/2,0/3,1/0,1/1,1/2,2/0,2/1,3/0,3/1,4/0,5/0} { 
    \filldraw[color=black,fill=gray!30!white,line width=1pt] (\xx,\yy)--(\xx+1,\yy)--(\xx+1,\yy+1)--(\xx,\yy+1)--cycle;
    }
    \draw[->, ultra thick] (0,0)--(0,4.5);
    \draw[->, ultra thick] (0,0)--(6.5,0);
    \draw[rounded corners, line width=2pt, red] (-0.1,0.9)--(-0.1,4.1)--(1.1,4.1)--(1.1,3.1)--(2.1,3.1)--(2.1,2.1)--(4.1,2.1)--(4.1,0.9)--cycle;
    \draw[rounded corners, line width=2pt, blue, dash pattern=on 6pt off 6pt] (0.9,-0.1)--(0.9,3.1)--(2.1,3.1)--(2.1,2.1)--(4.1,2.1)--(4.1,1.1)--(6.1,1.1)--(6.1,-0.1)--cycle;
  \end{tikzpicture}
\end{center}
  \caption{The partitions $\operatorname{sh}_X(\pi)$ for $\pi = [4,3,2,2,1,1]$.}
  \label{fig:shave}
\end{figure}
\end{example}

For two $(d-1)$-dimensional partitions $\pi$ and $\pi'$, we say $\pi \geq \pi'$ if $\pi_\bs{v} \geq \pi'_{\bs{v}}$ for all $\bs{v} \in \NN_0^{d-1}$.

\begin{theorem}
  A partition labeling $\mathscr{L} = \{L^\bs{x} : \bs{x} \leq (m_1-1, \dots, m_d-1)\}$ corresponds to a valid GNS if and only if 
  \begin{itemize}
    \item $L^{\bs{0}} = \varnothing$; and 
    \item for all $\bs{x}, \bs{y}$ we have $L^\bs{x} \boxplus L^\bs{y} \geq \operatorname{sh}_X(L^{\bs{z}})$, where $X$ consists of the indices $j$ for which $x_j + y_j \geq m_j$, and moreover $z_i := (x_i + y_i) \mod{m_i}$.
  \end{itemize}
\end{theorem}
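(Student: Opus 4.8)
The plan is to prove both directions by reducing the GNS closure condition to a condition on the partition labels, exactly mirroring how Kunz's theorem works in dimension one. The key observation is that every point of $\NN_0^d$ can be written uniquely as $\bs{x} + (m_1 v_1, \dots, m_d v_d)$ where $\bs{x} \leq (m_1 - 1, \dots, m_d - 1)$ is a ``residue'' point and $\bs{v} \in \NN_0^d$ records the quotients; the label $L^{\bs x}$ encodes precisely the boundary between excluded and included points in each residue column, via $(L^{\bs{x}})_{\bs{v}} = \min\{\ell : \bs{x} + (m_1 v_1, \dots, m_{d-1} v_{d-1}, m_d \ell) \in \Lambda^d\}$ (where I abbreviate the $(d-1)$-dimensional index $\bs v \in \NN_0^{d-1}$). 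So $\Lambda^d$ is determined by $\mathscr{L}$, and the condition ``$\Lambda^d$ is closed under addition'' must translate into an inequality among the labels. The condition $L^{\bs 0} = \varnothing$ simply encodes $\bs 0 \in \Lambda^d$ together with the fact that each $m_i \bs{e^d_i} \in \Lambda^d$.

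First I would fix a residue decomposition and translate the additive-closure requirement into the language of labels. Take two included points $\bs a = \bs{x} + (m_1 u_1, \dots, m_d u_d)$ and $\bs{b} = \bs{y} + (m_1 w_1, \dots, m_d w_d)$, and examine their sum. Writing $c_j = \lfloor (x_j + y_j)/m_j \rfloor \in \{0, 1\}$ (since $x_j, y_j < m_j$) and $z_j = (x_j + y_j) \bmod m_j$, the sum has residue point $\bs z$ and quotient vector shifted by the carries $c_j$. Because $\Lambda^d$ is closed upward in each residue column (membership is governed by a threshold), $\bs a \in \Lambda^d$ exactly means $u_d \geq (L^{\bs x})_{\bs u}$ in the last coordinate, with $\bs u$ the first $d-1$ quotients, and similarly for $\bs b$. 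The closure condition then says: whenever $\bs a, \bs b \in \Lambda^d$, the point $\bs a + \bs b$ (which lives in column $\bs z$ with first-$(d-1)$ quotient $\bs u + \bs w + (c_1, \dots, c_{d-1})$ and last quotient $u_d + w_d + c_d$) must also lie in $\Lambda^d$. I would show this holds for all valid $\bs a, \bs b$ if and only if the threshold in column $\bs z$ at index $\bs u + \bs w + (c_1, \dots, c_{d-1})$ is at most the minimal achievable last-coordinate quotient, which is $(L^{\bs x})_{\bs u} + (L^{\bs y})_{\bs w} + c_d$.

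The heart of the argument is recognizing the two bookkeeping devices in the statement. The carry $c_j = 1$ precisely when $x_j + y_j \geq m_j$, i.e.\ $j \in X$; thus the shift of the residue-column index by $(c_1, \dots, c_{d-1})$ in the first $d-1$ coordinates is exactly the reindexing $\bs{v} \mapsto \bs{v}(X)$ built into $\operatorname{sh}_X$, and the carry $c_d$ into the last coordinate is exactly the $-1$ correction (the ``$+1$'' in Kunz's second inequality) applied when $d \in X$. Minimizing $(L^{\bs x})_{\bs u} + (L^{\bs y})_{\bs w}$ over all $\bs u + \bs w$ equal to a fixed value is, by definition, the min-sum $L^{\bs x} \boxplus L^{\bs y}$. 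Putting these together, the condition ``for every choice of quotients the sum stays in $\Lambda^d$'' becomes exactly $(L^{\bs x} \boxplus L^{\bs y})_{\bs w} \geq (\operatorname{sh}_X(L^{\bs z}))_{\bs w}$ for all $\bs w$, which is the claimed inequality $L^{\bs x} \boxplus L^{\bs y} \geq \operatorname{sh}_X(L^{\bs z})$. I expect the main obstacle to be the careful case analysis around the last coordinate, particularly handling the $\max(\,\cdot\,-1, 0)$ truncation in $\operatorname{sh}_X$ when $d \in X$: I must check that the truncation at $0$ never causes a spurious failure, which amounts to verifying that when $(L^{\bs z})_{\bs z(X)} = 0$ (the whole column is in $\Lambda^d$) the closure condition is automatic, and conversely that no off-by-one error arises from whether the carry $c_d$ is absorbed into the threshold or into the shift.

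For the reverse direction, I would run the same computation backward: assuming the label inequality holds for all $\bs x, \bs y$ and all $X$, I reconstruct $\Lambda^d$ from $\mathscr{L}$ and verify directly that it contains $\bs 0$, is cofinite (each residue column is eventually all included, so the complement is finite), and is closed under addition. Closure follows because for any two included points the computation above shows their sum meets the required threshold in its residue column; cofiniteness follows from $L^{\bs 0} = \varnothing$ together with the fact that the labels are genuine partitions, so thresholds are bounded, making each of the finitely many columns eventually included. This establishes the biconditional.
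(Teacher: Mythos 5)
Your proposal is correct and follows essentially the same route as the paper: decompose points into residue columns modulo $(m_1,\dots,m_d)$, translate additive closure into a threshold inequality in the last coordinate, identify the carries with the set $X$ (so the index shift and the $-1$ correction become exactly $\operatorname{sh}_X$), and recognize the minimization over $\bs{u}+\bs{w}$ fixed as the min-sum $\boxplus$. Your extra care about the $\max(\cdot-1,0)$ truncation and about cofiniteness in the reverse direction only makes explicit details the paper leaves implicit.
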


\begin{proof}
  The set of points $\Lambda^d$ corresponding to $\mathscr{L}$ correspond to a valid GNS if and only if the set is closed under addition. Any point in $\Lambda^d$ is expressible in the form $\bs{x} + (m_1v_1, \dots, m_{d-1}v_{d-1}, m_d\ell)$ where $\bs{x} \leq (m_1-1, \dots, m_d-1)$. 

  Now, select $\bs{x} + (m_1v_1, \dots, m_{d-1}v_{d-1}, m_d\ell)$ and $\bs{y} + (m_1w_1, \dots, m_{d-1}w_{d-1}, m_dk)$ from $\Lambda^d$. These must satisfy $\ell \geq (L^{\bs{x}})_{\bs{v}}$ and $k \geq (L^{\bs{y}})_{\bs{w}}$. We require that their sum
  \begin{multline*}
  \bs{x} + \bs{y} + (m_1(v_1+w_1), \dots, m_{d-1}(v_{d-1}+w_{d-1}), m_d(\ell+k)) \\ = \ \bs{z} + \left(\sum_{i \in X} m_i\mathbf{e_i^d}\right) + (m_1(v_1+w_1), \dots, m_{d-1}(v_{d-1} + w_{d-1}), m_d(\ell+k))
\end{multline*}
  is in $\Lambda^d$, too. Let $\bs{a} := (v_1, \dots, v_{d-1},\ell)+ (w_1,\dots,w_{d-1},k) + \sum_{i \in X} \mathbf{e_i^d}$. The above is true if and only if $a_d \geq (L^\bs{z})_{(a_1, \dots, a_{d-1})}$, or
  \begin{itemize}
    \item $\ell + k \geq (L^{\bs{z}})_{(a_1, \dots, a_{d-1})}$ if $d \not \in X$, and
    \item $\ell + k \geq (L^{\bs{z}})_{(a_1, \dots, a_{d-1})}-1$ if $d \in X$.
  \end{itemize}
  Note the right-hand side is exactly $\left(\operatorname{sh}_X(L^{\bs{z}})\right)_{\bs{v} + \bs{w}}$. 

  So essentially, the condition that $\Lambda^d$ is closed under addition is equivalent to the following: if $\ell \geq (L^{\bs{x}})_{\bs{v}}$ and $k \geq (L^{\bs{y}})_{\bs{w}}$, then $\ell + k \geq \left(\operatorname{sh}_X(L^{\bs{z}})\right)_{\bs{v}+\bs{w}}$. Fixing $\bs{u} := \bs{v} + \bs{w}$, we thus have that
  \[\min_{\bs{u} = \bs{v} + \bs{w}} \left( (L^{\bs{x}})_{\bs{v}} + (L^{\bs{y}})_{\bs{w}} \right) \geq \left( \operatorname{sh}_X(L^{\bs{z}}) \right)_{\bs{u}}.\]
  The left-hand side is precisely $\left( L^{\bs{x}} \boxplus L^{\bs{y}} \right)_{\bs{u}}$, so the desired claim follows.
\end{proof}

Evidently, the sum of the entries over all partitions $L^\bs{x} \in \mathscr{L}(\Lambda^d)$ is the genus $g(\Lambda^d)$.

\subsection{GNSs with fixed multset} In this section, we study the number of GNSs of fixed multset, and thus fixed shape. The following result is immediate.

\begin{proposition}
  \label{prop:small}
  The number of GNSs with multset $\{\bs{e_1^d}, \dots, \bs{e_{i-1}^d}, 2\cdot \bs{e_i^d}, \bs{e_{i+1}^d}, \dots, \bs{e_d^d}\}$ and genus $g$ is $p_{d-1}(g)$.
\end{proposition}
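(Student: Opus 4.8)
The plan is to read off the statement directly from the partition-labeling characterization just established. Write $\mm$ for the given multset; then $m_i(\mm) = 2$ while $m_j(\mm) = 1$ for every $j \neq i$. The first step is to locate the support of the labeling: the labeled points are those $\bs{x} \leq (m_1-1,\dots,m_d-1)$, and since $m_j - 1 = 0$ for $j \neq i$ while $m_i - 1 = 1$, there are exactly two of them, namely $\bs{0}$ and $\bs{e_i^d}$. A labeling for $\mm$ is therefore nothing but a single $(d-1)$-dimensional partition $L := L^{\bs{e_i^d}}$, together with the forced label $L^{\bs{0}} = \varnothing$; and since $L^{\bs{0}}$ contributes nothing, the genus equals the total of $L$.

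The second step is to verify that the additive conditions of the labeling theorem impose no restriction on $L$, so that every $L$ is admissible. Using commutativity of the conditions in $\bs{x},\bs{y}$, there are only three pairs to examine. For $(\bs{0},\bs{0})$ the condition reads $\varnothing \boxplus \varnothing \geq \varnothing$, which is trivial. For $(\bs{0},\bs{e_i^d})$ one computes $X = \varnothing$ and $\bs{z} = \bs{e_i^d}$, so the condition is $\varnothing \boxplus L \geq \operatorname{sh}_\varnothing(L) = L$; this holds with equality, because min-summing with the zero partition is the identity (the minimum of $L$ over $\bs{b} \leq \bs{v}$ is attained at $\bs{b}=\bs{v}$ since $L$ is weakly decreasing). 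The crux is the pair $(\bs{e_i^d},\bs{e_i^d})$: here $X = \{i\}$ and $\bs{z} = \bs{0}$, so the right-hand side is $\operatorname{sh}_{\{i\}}(L^{\bs{0}}) = \operatorname{sh}_{\{i\}}(\varnothing)$. The key computation is that shaving the empty partition again yields the empty partition: reading off the definition, $(\operatorname{sh}_{\{i\}}(\varnothing))_{\bs{v}} = \max(0-1,0) = 0$ if $i = d$ and $(\operatorname{sh}_{\{i\}}(\varnothing))_{\bs{v}} = \varnothing_{\bs{v}(\{i\})} = 0$ if $i < d$. Hence the condition becomes $L \boxplus L \geq \varnothing$, which is vacuous.

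The step I expect to require the most care is ensuring the multset is \emph{exactly} $\mm$ rather than something smaller. A labeling with $L_{\bs{0}} = 0$ would place $\bs{e_i^d}$ itself into the semigroup, so that $2\bs{e_i^d}$ would no longer be minimal and the true multset would be $\{\bs{e_1^d},\dots,\bs{e_d^d}\}$; excluding this forces the extra requirement $L_{\bs{0}} \geq 1$. The final step is to observe that this requirement is automatic for $g \geq 1$: any $(d-1)$-dimensional partition $L$ with total $g \geq 1$ is nonempty, and since its entries are weakly decreasing under $\leq$, its maximal entry $L_{\bs{0}}$ is at least $1$. Thus, for each $g \geq 1$, the GNSs with multset $\mm$ and genus $g$ are in bijection with the $(d-1)$-dimensional partitions of $g$, of which there are $p_{d-1}(g)$, giving the claim. (The interesting range is $g \geq 1$, as a GNS with this multset necessarily has positive genus.)
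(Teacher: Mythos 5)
Your proof is correct and takes essentially the same route as the paper's: both identify the partition labeling as consisting of $L^{\bs{0}} = \varnothing$ together with a single $(d-1)$-dimensional partition $L^{\bs{e_i^d}}$ whose total is the genus, and then count such partitions. You are in fact more careful than the paper, which simply asserts that all $p_{d-1}(g)$ choices "give rise to valid GNSs"; your explicit check of the three labeling conditions and of the requirement $(L^{\bs{e_i^d}})_{\bs{0}} \geq 1$ (needed so the multset is exactly $\mm$ rather than $\{\bs{e_1^d},\dots,\bs{e_d^d}\}$, and automatic for $g \geq 1$) fills in details the published argument leaves implicit.
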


\begin{proof}
  Suppose $\Lambda^d$ has genus $g$ and the given multset. Then the partition labeling $\mathscr{L}(\Lambda^d)$ has $L^\bs{0} = \varnothing$ and $L^\bs{e_i^d}$ being some partition of $g$. There are exactly $p_{d-1}(g)$ choices of $L^{\bs{e_i^d}}$, all of which give rise to valid GNSs, so the result follows.
\end{proof}

\begin{corollary}
  The number of $d$-dimensional GNSs with shape size $2$ is $d \cdot p_{d-1}(g)$.
\end{corollary}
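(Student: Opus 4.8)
The plan is to reduce the corollary to \Cref{prop:small} by classifying exactly which multsets have shape size $2$. The shape size $s(\Lambda^d) = s_1(\mm(\Lambda^d)) = \#\mathcal{R}_1(\mm(\Lambda^d))$ depends only on the multset, and the multset is an invariant of the GNS, so the number of genus-$g$ GNSs of shape size $2$ splits as a sum over all multsets $\mm$ with $s_1(\mm) = 2$ of the number of genus-$g$ GNSs having that multset. Thus it suffices to (i) determine all multsets with $s_1(\mm) = 2$, and (ii) count genus-$g$ GNSs for each.

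First I would show that the multsets with $s_1(\mm) = 2$ are precisely the $d$ multsets appearing in \Cref{prop:small}. Recall (cf.\ \Cref{ex:gns-2}) that $\mathcal{R}_1(\mm)$ is the set of points not lying above any element of $\mm$, a down-set that always contains $\bs{0}$. If $s_1(\mm) = 2$, write $\mathcal{R}_1(\mm) = \{\bs{0}, \bs{p}\}$ with $\bs{p} \neq \bs{0}$. Since $\mathcal{R}_1$ is closed downward, every point $\leq \bs{p}$ lies in $\mathcal{R}_1$, so $\prod_{i=1}^d (p_i + 1) = \#\{\bs{x} : \bs{x} \leq \bs{p}\} \leq 2$; together with $\bs{p} \neq \bs{0}$ this forces exactly one coordinate of $\bs{p}$ to equal $1$ and the rest to vanish, i.e.\ $\bs{p} = \bs{e_i^d}$ for some $i$.

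Next I would recover $\mm$ from $\mathcal{R}_1$. Because $\mm$ is an antichain (\Cref{prop:mult-char}), it equals the set of minimal elements of the up-set $\NN_0^d \setminus \mathcal{R}_1(\mm)$. With $\mathcal{R}_1(\mm) = \{\bs{0}, \bs{e_i^d}\}$, a short check identifies these minimal elements as $\bs{e_j^d}$ for $j \neq i$ together with $2\bs{e_i^d}$: each $\bs{e_j^d}$ ($j \neq i$) lies in the complement and has nothing below it in the complement, while $2\bs{e_i^d}$ is minimal since $\bs{0}$ and $\bs{e_i^d}$ are excluded. Hence $\mm = \{\bs{e_1^d}, \dots, 2\bs{e_i^d}, \dots, \bs{e_d^d}\}$. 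Conversely, each of these $d$ multsets has $\mathcal{R}_1 = \{\bs{0}, \bs{e_i^d}\}$ and thus shape size $2$, so the classification is exact.

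Finally, \Cref{prop:small} gives $p_{d-1}(g)$ genus-$g$ GNSs for each of these $d$ distinct multsets, and since distinct multsets yield disjoint families of GNSs, summing over $i = 1, \dots, d$ gives $d \cdot p_{d-1}(g)$. The only point requiring care is the characterization step, where one must verify that $\mathcal{R}_1$ includes the origin and is a down-set and that passing to minimal elements recovers $\mm$ uniquely; beyond this the argument is direct bookkeeping, and I expect no substantive obstacle.
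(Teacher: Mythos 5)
Your proposal is correct and follows essentially the same route as the paper: classify the multsets of shape size $2$ as $\{\bs{e_1^d}, \dots, 2\bs{e_i^d}, \dots, \bs{e_d^d}\}$ for $i = 1, \dots, d$, then apply \Cref{prop:small} to each and sum over the $d$ disjoint families. The paper simply asserts this classification, whereas you spell out the down-set argument forcing $\mathcal{R}_1 = \{\bs{0}, \bs{e_i^d}\}$ and the recovery of $\mm$ as the minimal elements of the complement; this is a faithful (and slightly more detailed) version of the same proof.
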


\begin{proof}
  If $s(\Lambda^d) = 2$, then $\mm(\Lambda^d)$ has $d$ elements and consists of $2\bs{e_i^d}$ for some $i$, and $\bs{e_{j}^d}$ for $j \neq i$. Then apply~\Cref{prop:small} for each $i$.
\end{proof}

However, it is significantly more difficult to count the number of GNSs with shape size at least $3$. For instance, in the case of $d=2$, the possible shapes of size $3$ are:
\begin{itemize}
  \item $[3]$, which corresponds to $\mm = \{(0,3),(1,0)\}$;
  \item $[2,1]$, which corresponds to $\mm = \{(0,2),(1,1),(2,0)\}$; and
  \item $[1,1,1]$, which corresponds to $\mm = \{(0,1),(3,0)\}$.
\end{itemize}
One can check that
\begin{itemize}
  \item the number $n_{g,2}^{[3]}$ of GNSs with shape $[3]$ is equal to the number of pairs of partitions $\pi, \pi'$, whose total is $g$, such that $\pi \boxplus \pi \geq \pi'$ and $\pi' \boxplus \pi' \geq \operatorname{sh}_{\{1\}}(\pi)$;
  \item the number $n_{g,2}^{[2,1]}$ of GNSs with shape $[2,1]$ is equal to the number of pairs of partitions $\pi,\pi'$, whose total is $g$, such that $\pi \geq \operatorname{sh}_{\{1,2\}} (\pi')$ and $\pi' \geq \operatorname{sh}_{\{1,2\}} (\pi)$; and
  \item we have $n_{g,2}^{[1,1,1]} = n_{g,2}^{[3]}$.
\end{itemize}

We have computed $n_{g,2}^{[3]}$ and $n_{g,2}^{[2,1]}$ for $g \leq 50$, shown in~\Cref{tab:3shape}. These sequences do not yet appear on the OEIS and do not appear to have a well-behaved closed form.

\begin{table}[]
  \small
\begin{tabular}{|rrr|rrr|rrr|rcr|}
\hline
$g$ & $n_{g,2}^{[3]}$ & $n_{g,2}^{[2,1]}$ & $g$ & $n_{g,2}^{[3]}$ & $n_{g,2}^{[2,1]}$ & $g$ & $n_{g,2}^{[3]}$ & $n_{g,2}^{[2,1]}$ & $g$                  & \multicolumn{1}{r}{$n_{g,2}^{[3]}$} & $n_{g,2}^{[2,1]}$     \\ [0.5ex] \hline
1   & 0               & 0                 & 14  & 1028            & 1675              & 27  & 92415           & 105990            & 40                   & \multicolumn{1}{r}{3546174}         & 2908311               \\
2   & 1               & 1                 & 15  & 1526            & 2422              & 28  & 125261          & 139819            & 41                   & \multicolumn{1}{r}{4587402}         & 3671626               \\
3   & 4               & 4                 & 16  & 2241            & 3462              & 29  & 168974          & 183648            & 42                   & \multicolumn{1}{r}{5918389}         & 4623480               \\
4   & 8               & 10                & 17  & 3251            & 4900              & 30  & 227020          & 240224            & 43                   & 7615125                                 & 5807744               \\
5   & 14              & 22                & 18  & 4691            & 6874              & 31  & 303674          & 312984            & 44                   & 9773454                                 & 7277974               \\
6   & 27              & 43                & 19  & 6697            & 9560              & 32  & 404646          & 406255            & 45                   & 12512191                                 & 9099348               \\
7   & 45              & 76                & 20  & 9503            & 13198             & 33  & 537092          & 525424            & 46                   & 15980127                                 & 11351083              \\
8   & 73              & 129               & 21  & 13387           & 18092             & 34  & 710360          & 677201            & 47                   &  20361285                                & 14129340              \\
9   & 118             & 210               & 22  & 18747           & 24636             & 35  & 936150          & 869940            & 48                   &  25885096                                & 17550599              \\
10  & 189             & 331               & 23  & 26074           & 33344             & 36  & 1229632         & 1113989           & 49                   &  32834413                                & 21755722              \\
11  & 293             & 510               & 24  & 36073           & 44873             & 37  & 1609732         & 1422136           & 50                   &  41560508                                & 26914894              \\
12  & 454             & 771               & 25  & 49595           & 60058             & 38  & 2100858         & 1810194           & \multicolumn{1}{l}{} & \multicolumn{1}{l}{}                & \multicolumn{1}{l|}{} \\
13  & 684             & 1144              & 26  & 67874           & 79977             & 39  & 2733427         & 2297616           & \multicolumn{1}{l}{} & \multicolumn{1}{l}{}                & \multicolumn{1}{l|}{} \\ \hline
\end{tabular}
\caption{Values of $n_{g,2}^{[3]}$ and $n_{g,2}^{[2,1]}$ for $g \leq 50$.}
\label{tab:3shape}
\end{table}

Nevertheless, we can write down the following coarse asymptotic bound on the number of GNSs with fixed multset.

\begin{proposition}
  For each $d$, there is a constant $K_d > 0$ for which the number of GNSs with genus $g$ and multset $\mm$ is at most $K_d^{g^{(d-1)/d} V(\mm)^{1/d}}$.
\end{proposition}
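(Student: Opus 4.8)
The plan is to bound the number of GNSs with multset $\mm$ and genus $g$ by the number of partition labelings that can arise, and then estimate the latter crudely as a count of tuples of $(d-1)$-dimensional partitions. Recall that $\mathscr{L}$ is injective (one recovers $\Lambda^d$ from $\mathscr{L}(\Lambda^d)$) and that the genus equals the total number of boxes over all labels $L^{\bs{x}}$. A GNS with multset $\mm$ has $m_i \bs{e^d_i} \in \mm$, so its labeling is supported on the box $B := \{\bs{x} \in \NN_0^d : 0 \le x_i \le m_i - 1\}$, which has exactly $V := V(\mm)$ points; each $L^{\bs{x}}$ is a $(d-1)$-dimensional partition, and these have total $g$. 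Discarding every constraint on the labels (including the forced $L^{\bs{0}} = \varnothing$) only increases the count, so the number we seek is at most the number of tuples $(\pi^{(1)}, \dots, \pi^{(V)})$ of $(d-1)$-dimensional partitions with $\sum_i |\pi^{(i)}| = g$, i.e. at most $[x^g] P_{d-1}(x)^V = \sum_{g_1 + \dots + g_V = g} \prod_{i=1}^V p_{d-1}(g_i)$.

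I would bound this sum by (number of terms) times (largest term). For the largest term, \Cref{thm:abp} provides a constant $K = e^{k_{d-1}^+}$ with $p_{d-1}(n) \le K^{n^{(d-1)/d}}$ for all $n$, whence $\prod_i p_{d-1}(g_i) \le K^{\sum_i g_i^{(d-1)/d}}$. Since $t \mapsto t^{(d-1)/d}$ is concave for $d \ge 2$, Jensen's inequality applied to the $V$ values $g_i$ (which sum to $g$) gives $\sum_i g_i^{(d-1)/d} \le V (g/V)^{(d-1)/d} = V^{1/d} g^{(d-1)/d}$. Hence every term, and therefore the largest term, is at most $K^{V^{1/d} g^{(d-1)/d}}$, which is already of the desired shape.

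The remaining, and main, point is to absorb the number of terms, $\binom{g + V - 1}{V - 1}$, into a bound of the same shape; this is where the two regimes $V \le g$ and $V > g$ must both be handled and where $d \ge 2$ is essential. Using $\binom{n}{k} \le (en/k)^k$ in the forms $\binom{g+V-1}{V-1} \le (e(g+V)/V)^V$ when $V \le g$ and $\binom{g+V-1}{g} \le (e(g+V)/g)^g$ when $V > g$, one reduces to the elementary fact that $u^{-\alpha}\ln(2eu)$ is bounded for $u \ge 1$ whenever $\alpha > 0$ (it tends to $0$ as $u \to \infty$), applied with $\alpha = (d-1)/d$ in the first regime and $\alpha = 1/d$ in the second. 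This yields $\ln \binom{g+V-1}{V-1} = O_d(g^{(d-1)/d} V^{1/d})$, i.e. $\binom{g+V-1}{V-1} \le (K')^{g^{(d-1)/d} V^{1/d}}$ for a suitable $K' = K'(d)$, and multiplying the two bounds gives the claim with $K_d = K K'$. Note $d \ge 2$ is genuinely needed: the first regime relies on $(d-1)/d > 0$, and for $d = 1$ the statement is false, since the number of numerical semigroups of fixed multiplicity $m$ and genus $g$ already grows polynomially in $g$ while $g^{(d-1)/d}V^{1/d} = m$ is bounded in $g$.

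As a cleaner alternative that packages both factors at once, one can instead bound $[x^g]P_{d-1}(x)^V \le \inf_{0 < x < 1} x^{-g} P_{d-1}(x)^V$ using that all coefficients of $P_{d-1}$ are nonnegative. Writing $x = e^{-t}$ and deducing from \Cref{thm:abp} the crude bound $\ln P_{d-1}(e^{-t}) = O(t^{-(d-1)})$ (by summing $e^{\kappa n^{(d-1)/d} - tn}$ and locating its maximum near $n \asymp t^{-d}$), the exponent becomes $gt + O(V t^{-(d-1)})$, which is optimized at $t \asymp (V/g)^{1/d}$ and evaluates to $O_d(g^{(d-1)/d} V^{1/d})$. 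This avoids the binomial casework entirely, at the cost of establishing the growth of $P_{d-1}$ near $x = 1$.
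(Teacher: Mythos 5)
Your proposal is correct and follows essentially the same route as the paper's proof: bound the count by partition labelings, fix the sizes $n^{\bs{x}}$ of the $V(\mm)$ partition labels, combine \Cref{thm:abp} with concavity of $t \mapsto t^{(d-1)/d}$ (Jensen in your writeup, Karamata in the paper) to bound the product of $p_{d-1}$ values by $K^{g^{(d-1)/d}V(\mm)^{1/d}}$, and then absorb the number of compositions of $g$ into $V(\mm)$ parts. The one place you go beyond the paper is that you actually prove the composition-count bound via the two regimes $V \le g$ and $V > g$, whereas the paper merely asserts $\binom{V+g-1}{g} = o\left(\exp\left(g^{(d-1)/d} V^{1/d}\right)\right)$; your observation that this step (and the statement itself) genuinely requires $d \ge 2$ and fails for $d = 1$ is a correct caveat that the paper's ``for each $d$'' glosses over.
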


\begin{proof}
  By~\Cref{thm:abp}, there is a constant $k^+ > 0$ for which $p_{d-1}(n) < \exp(k^+ n^{(d-1)/d})$ for all $n$. A partition labeling $\mathscr{L}(\Lambda^d)$ consists of at most $V(\mm)$ nonzero partition labels whose sum is equal to $g$. Namely, if we require that $L^{\bs{x}}$ is a partition of size $n^{\bs{x}}$, then there are at most
  \[\prod_{\bs{x} \leq (m_1-1, \dots, m_d-1)} p_{d-1}(n^{\bs{x}}) \leq \exp\left(k^+ \sum_{\bs{x} \leq (m_1-1, \dots, m_d-1)} (n^{\bs{x}})^{(d-1)/d} \right)\]
  choices of $\mathscr{L}(\Lambda^d)$. Since $g = \sum n^{\bs{x}}$ and $f(x) = x^{(d-1)/d}$ is concave, by Karamata's inequality on concave functions we have that the right-hand side is at most
  \[\exp\left( k^+ (g/V)^{(d-1)/d} V \right).\]
  There are at most $\binom{V+g-1}{g} = o\left( \exp\left(g^{(d-1)/d} V^{1/d} \right) \right)$ choices of $n^{\bs{x}}$, so the result follows.
\end{proof}

\subsection{Rectangular GNSs}

We say that a GNS $\Lambda^d$ is \vocab{rectangular} if $\mathcal{M}(\Lambda^d)$ has size $d$. In particular, this means that $\mm(\Lambda^d) = \{m_1\mathbf{e^d_1}, \dots, m_d \mathbf{e_d^d}\}$. We denote this by using the square symbol $\square$ in the exponent. The partition labeling of a rectangular GNS $\Lambda^{d,\square}$ naturally encodes the depth $q$ in the following way.

\begin{proposition}
  The depth of a rectangular GNS $\Lambda^{d,\square}$ is exactly
  \[ \max_{L^{\bs{x}} \in \mathscr{L}(\Lambda^{d,\square})} \max_{\substack{\bs{v} \in \NN_0^d \\ (L^{\bs{x}})_\bs{v} > 0}} ( v_1 + \dots + v_{d-1} + (L^{\bs{x}})_\bs{v}).\]
\end{proposition}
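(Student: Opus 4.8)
The plan is to pass through an explicit description of the depth-$k$ regions of a rectangular multset and then match up the gaps of $\Lambda^{d,\square}$ with the entries of its partition labeling. Write $\mm = \{m_1\bs{e_1^d}, \dots, m_d\bs{e_d^d}\}$. The first step is to compute the Minkowski sums $k\mm$. Since each generator of $\mm$ is a multiple of a single axis vector, every element of $k\mm$ has the form $(c_1 m_1, \dots, c_d m_d)$ with $c_i \in \NN_0$ and $\sum_{i=1}^d c_i = k$. A point $\bs{x}$ dominates such an element exactly when the $c_i \leq \floor{x_i/m_i}$ can be chosen summing to $k$, i.e. when $\sum_{i=1}^d \floor{x_i/m_i} \geq k$. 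Hence $\bs{x} \in \mathcal{R}_{\leq k}(\mm)$ iff $\sum_i \floor{x_i/m_i} \leq k-1$, and therefore $\bs{x} \in \mathcal{R}_k(\mm)$ iff $\sum_i \floor{x_i/m_i} = k-1$. This is the rectangular analogue of the $d=1$ stratification of $\NN_0$ into blocks of size $m$.

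The second step rephrases the depth. By the monotonicity used in the proof of \Cref{cor:dumb} (if $\mathcal{R}_k \subset \Lambda^d$ then $\mathcal{R}_{k+1} \subset \Lambda^d$), the set of indices $k$ with $\mathcal{R}_k \subset \Lambda^d$ is upward closed, so \Cref{cor:dumb} shows that $q(\Lambda^{d,\square})$ is the largest index $k$ for which $\mathcal{R}_k$ still contains a gap. Since each gap $\bs{y}$ lies in exactly one region, namely $\mathcal{R}_k$ with $k = \sum_i \floor{y_i/m_i} + 1$ by the first step, this gives
\[ q(\Lambda^{d,\square}) = 1 + \max_{\bs{y} \in \NN_0^d \setminus \Lambda^{d,\square}} \sum_{i=1}^d \floor{y_i/m_i}. \]

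The third step identifies the gaps through the labeling. Every $\bs{y} \in \NN_0^d$ factors uniquely as $\bs{y} = \bs{x} + (m_1 v_1, \dots, m_{d-1}v_{d-1}, m_d\ell)$ with $\bs{x} \leq (m_1-1, \dots, m_d-1)$, $\bs{v} \in \NN_0^{d-1}$, and $\ell \in \NN_0$; for this factorization $\floor{y_i/m_i} = v_i$ for $i < d$ and $\floor{y_d/m_d} = \ell$, so the level of $\bs{y}$ is $\sum_i \floor{y_i/m_i} = v_1 + \dots + v_{d-1} + \ell$. Because $m_d \bs{e_d^d} \in \Lambda^{d,\square}$, each column obtained by fixing $(\bs{x},\bs{v})$ and varying $\ell$ is upward closed, so the definition of $L^{\bs{x}}$ makes its gaps precisely those with $0 \leq \ell < (L^{\bs{x}})_{\bs{v}}$. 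Maximizing the level over $\ell$ in this range yields $v_1 + \dots + v_{d-1} + (L^{\bs{x}})_{\bs{v}} - 1$, attainable exactly when $(L^{\bs{x}})_{\bs{v}} > 0$. Substituting into the displayed formula for $q$ cancels the $-1$ against the $+1$ and produces the asserted double maximum over $L^{\bs{x}} \in \mathscr{L}(\Lambda^{d,\square})$ and over $\bs{v}$ with $(L^{\bs{x}})_{\bs{v}} > 0$.

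I expect the only genuinely conceptual step to be the first one: recognizing that the rectangular structure forces $k\mm$ onto the scaled simplex $\{\sum c_i = k\}$ and thereby collapses membership in $\mathcal{R}_{\leq k}$ to the single inequality $\sum_i \floor{x_i/m_i} \leq k-1$. After this, the remainder is bookkeeping, and the only delicate point is keeping track of two off-by-one shifts in tandem — the one between a gap's level and its depth index, and the one between the largest excluded height $\ell$ in a column and the partition value $(L^{\bs{x}})_{\bs{v}}$ — which are precisely what cause the $-1$ and $+1$ to cancel.
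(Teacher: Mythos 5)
Your proof is correct and takes essentially the same route as the paper: compute $k\mm$ explicitly for a rectangular multset, invoke \Cref{cor:dumb} (plus the monotonicity of the regions) to identify the depth with the largest $k$ for which $\mathcal{R}_k$ contains a gap, and convert gaps into entries of the partition labeling. If anything, your bookkeeping is more careful than the paper's own proof, which asserts that a gap $\bs{x} + (m_1v_1, \dots, m_{d-1}v_{d-1}, m_d\ell)$ lies in $\mathcal{R}_k$ iff $v_1 + \dots + v_{d-1} + \ell = k$ (the correct index is $k-1$) and then silently compensates with a second off-by-one when reading off the maximum, whereas you track both shifts explicitly and show they cancel.
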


\begin{proof}
  If $\mm = \{m_1\bs{e_1^d}, \dots, m_d\bs{e_d^d}\}$, then we have that
  \[k\mm = \{(a_1m_1, \dots, a_dm_d) : a_1 + \dots + a_d = k\}.\]
  Thus, for each $\bs{x} \leq (m_1-1, \dots, m_d-1)$, the point $\bs{x} + (m_1v_1, \dots, m_{d-1}v_{d-1}, m_d\ell) \not\in \Lambda^d$ is part of $\mathcal{R}_k$ if and only if $v_1 + \dots + v_{d-1} + \ell = k$. The maximal value of the left-hand sum over all choices of $(\bs{v}, \ell)$ is exactly the expression given in the theorem. But by~\Cref{cor:dumb}, the depth $q$ is the greatest integer $k$ for which $\mathcal{R}_k \not\subseteq \Lambda^{d,\square}$, from which the result follows after letting $\bs{x}$ vary.
\end{proof}

Recall that $P_d(x) := \sum_{n \geq 0} p_d(n)x^n$ is the multi-dimensional partition generating function.

\begin{theorem}
  Let $\mathsf{r}'_d$ be the unique positive root of $P_d(1/x) = 2$. Then we have
  \[\mathsf{r}_d \leq \limsup_{g\to\infty} \left( n^{\square}_{g,d} \right)^{1/g} \leq \mathsf{r}'_d.\]
\end{theorem}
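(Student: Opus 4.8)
The plan is to sandwich $n^{\square}_{g,d}$ between the coefficients of two explicit generating functions and then read off the two growth rates by singularity analysis. Concretely, I would establish
\[ [x^g]\,\frac{1}{1 - x(x+1)^{d-1}} \ \le\ n^{\square}_{g,d} \ \le\ [x^g]\,\frac{1}{2 - P_d(x)}, \]
and then argue that the left-hand side grows like $\rr_d^{\,g}$ while the right-hand side grows like $\rr'_d{}^{g}$. The appearance of $P_d$ on the right and of $x(x+1)^{d-1}$ on the left is not a coincidence: $x(x+1)^{d-1}=\sum_{j}\binom{d-1}{j}x^{1+j}$ is literally a sub-sum of $P_d(x)-1=\sum_{n\ge1}p_d(n)x^n$, so both bounds come from the same ``composition into $d$-dimensional partitions'' picture, one using all nonempty partitions and the other using only elementary ones.

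For the upper bound, which I expect to be the crux, the idea is to encode each rectangular GNS of genus $g$ \emph{injectively} as an ordered sequence (a composition) of nonempty $d$-dimensional partitions whose sizes sum to $g$. One decomposes the gap set $\NN_0^d\setminus\Lambda^{d,\square}$ along the depth filtration $\mathcal{R}_1,\mathcal{R}_2,\dots$ — note that for a rectangular multset one has the clean description $\mathcal{R}_k=\{\bs{x}:\sum_i\lfloor x_i/m_i\rfloor=k-1\}$ — reading off from each occupied level a $d$-dimensional partition recording the newly forbidden points. Each occupied level contributes a nonempty partition and the sizes total $g$, so such sequences are counted by $\sum_{k\ge0}(P_d(x)-1)^k=\frac{1}{2-P_d(x)}$. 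To extract the growth rate I would use \Cref{thm:abp} to note that $p_d(n)$ grows subexponentially, so $P_d$ has radius of convergence $1$ and increases continuously from $P_d(0)=1$ to $+\infty$ on $[0,1)$; hence $2-P_d$ has a unique positive root $x_0\in(0,1)$ with $P_d(x_0)=2$, which by definition is $x_0=1/\rr'_d$. A Pringsheim/nonnegative-coefficient argument (for $|x|<x_0$ one has $|P_d(x)|\le P_d(|x|)<2$) shows $x_0$ is the dominant singularity and a simple pole, so $[x^g]\frac{1}{2-P_d(x)}\sim C\,\rr'_d{}^{g}$ and $\limsup_g (n^{\square}_{g,d})^{1/g}\le \rr'_d$.

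For the lower bound I would restrict to an explicit sub-family. The generating function $x(x+1)^{d-1}$ is exactly the size-generating function of the ``elementary'' $d$-dimensional partitions consisting of a single box at the origin together with an optional unit box along each of $d-1$ chosen axes. Building rectangular GNS whose depth-level data uses only these elementary pieces gives, after checking that the partition-labeling inequalities of the characterization theorem hold automatically, a valid subfamily counted by sequences of elementary partitions, i.e.\ by $\frac{1}{1-x(x+1)^{d-1}}$. Its coefficients satisfy a linear recurrence with characteristic polynomial $x^d-(x+1)^{d-1}$ — precisely the polynomial defining $\rr_d$ — so by the same argument as in \Cref{cor:upper} they grow like $\rr_d^{\,g}$, yielding $\limsup_g (n^{\square}_{g,d})^{1/g}\ge \rr_d$. (A quick, slightly stronger alternative is to use depth-$2$ rectangular GNS directly: the rectangular multset of volume $V=\prod m_i$ has $s_1=V$ and $s_2=dV$, so by \Cref{prop:ngd2-form} it supports $\binom{d(V-1)}{g+1-V}$ semigroups; optimizing $V$ via \Cref{lem:max-egf} and \Cref{lem:bin-const} with $k=d$ gives growth $\rr_{d+1}^{\,g}\ge\rr_d^{\,g}$.)

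The main obstacle is the upper-bound encoding. One must pin down precisely how to assemble the $(d-1)$-dimensional partition labels of the $V=\prod m_i$ residue classes into a single sequence of genuine $d$-dimensional partitions, verify that every contributing level is nonempty and that the component sizes total exactly the genus $g$, and — most importantly — prove that the resulting map is injective, so that discarding the inter-level and validity constraints only loses information. Reconciling the dimension count (labels are $(d-1)$-dimensional, yet the target pieces must be $d$-dimensional) is the delicate point, and it is exactly this that forces $P_d$ rather than $P_{d-1}$ into the bound; making the decomposition canonical and checking injectivity is where the real work lies.
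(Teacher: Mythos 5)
Your lower bound is correct and is essentially the paper's own argument: restrict to labelings whose non-origin labels are tiny partitions (the paper uses all nonzero labels $\leq \tau \boxplus \tau$, with per-cell generating function $x(x+1)^d$; yours is the subfamily with per-cell generating function $x(x+1)^{d-1}$), observe that validity is automatic because $L^{\bs{x}} \boxplus L^{\bs{y}} \geq \tau \boxplus \tau \geq \operatorname{sh}_X(L^{\bs{z}})$, and apply a root test to the sum over volumes. Your depth-$2$ alternative via \Cref{prop:ngd2-form} also works, and in fact gives the stronger constant $\rr_{d+1}$.

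The upper bound is where your proposal breaks, and the problem is structural, not a missing detail. You propose to encode a rectangular GNS injectively by one nonempty $d$-dimensional partition per occupied depth level, the $k$-th partition having size equal to the number of gaps in $\mathcal{R}_k$. No such injection can exist, because gaps inside a single depth level are essentially unconstrained subsets, so their count is binomial rather than partition-like. Concretely, take the rectangular multset $\mm$ with $m_1 = V$ and $m_2 = \cdots = m_d = 1$, so $s_1 = V$, $s_2 = dV$, and $\mathcal{R}_2 \setminus \mm$ has $d(V-1)$ elements. By the sum-freeness of $\mathcal{R}_2$ (\Cref{prop:dumb-stuff}, exactly as used in the proof of \Cref{prop:ngd2-form}), excluding \emph{any} $n_2$ elements of $\mathcal{R}_2 \setminus \mm$ (and keeping $\mm$ and all higher levels) yields a valid rectangular GNS of depth at most $2$, with level gap sizes exactly $(V-1, n_2)$. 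Taking $n_2 = \lfloor d(V-1)/2 \rfloor$ produces $\binom{d(V-1)}{n_2} \geq 2^{d(V-1)}/(d(V-1)+1)$ such semigroups, exponential in $V$; but any level-size-preserving encoding must send them to pairs of $d$-dimensional partitions of sizes $(V-1, n_2)$, of which there are only $p_d(V-1)\,p_d(n_2) = \exp\bigl(O\bigl(V^{d/(d+1)}\bigr)\bigr)$ by \Cref{thm:abp}, subexponential in $V$. By pigeonhole your map fails to be injective already for this single multset, so no amount of care in "making the decomposition canonical" can rescue it.

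The repair is to slice by residue class rather than by depth level, which is what the paper does. Closure of $\Lambda^d$ under adding $m_i\bs{e_i^d}$ forces the gaps within each of the $V = \prod_i m_i$ residue classes modulo $(m_1, \dots, m_d)$ to form, after rescaling, the diagram of a $(d-1)$-dimensional partition; this is precisely the partition labeling $\mathscr{L}(\Lambda^d)$, and injectivity is free since $\Lambda^d$ is recoverable from $\mathscr{L}(\Lambda^d)$. This gives $\sum_g n^{\mm}_{g,d} x^g \leq (P_d(x)-1)^{V(\mm)}$ per multset (your dimension-mismatch worry is a red herring: the classes yield $(d-1)$-dimensional partitions, and $p_{d-1} \leq p_d$ only weakens the bound safely). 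The second ingredient your plan omits entirely is the sum over multsets: there are at most $\sigma_0(V)^d$ rectangular multsets of volume $V$, a subexponential factor, so the root test applied to $\sum_V \sigma_0(V)^d (P_d(x)-1)^V$ gives convergence precisely when $P_d(x) < 2$, i.e.\ $x < 1/\rr'_d$, whence $\limsup_{g\to\infty} (n^{\square}_{g,d})^{1/g} \leq \rr'_d$.
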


\begin{proof}
  We show the lower and upper bounds separately. Let $n_{g,d}^\mm$ be the number of $d$-dimensional GNSs with genus $g$ and multset $\mm$. In each case, we estimate $n_{g,d}^\mm$ for fixed $\mm$ of size $d$, then we sum over all $\mm$.

Suppose $\Lambda^d$ has multset $\mm = \{m_1 \mathbf{e_1^d}, \dots, m_d\mathbf{e_d^d}\}$.

  \paragraph{Lower bound.} We construct a family of valid partition labelings $\mathscr{L}(\Lambda^d)$ that match the lower bound. Let $\tau$ denote the $(d-1)$-dimensional partition given by $\tau_{\bs{0}} = 1$ and $\tau_{\bs{v}} = 0$ for $\bs{v} > \bs{0}$. If every partition in $\mathscr{L}$ is nonzero and less than or equal to $\tau \boxplus \tau$, then evidently $\mathscr{L}$ is a valid partition labeling. 

  There are exactly $2^d$ nonempty partitions less than or equal to $\tau \boxplus \tau$, of which $\binom{d}{s-1}$ have sum $s \geq 1$. Hence, we have that
  \[\sum_{g=1}^\infty n^{\mm}_{g,d} x^g \geq \left( x(x+1)^d \right)^{V(\mm)}\]
  for any $x > 0$. By summing over all $\mm$, we can show that

  \begin{align*}
    \sum_{g=1}^\infty n_{g,d}^\square x^g &= \sum_{\substack{\mm \text{ multset} \\ |\mm| = d}} \sum_{g=1}^\infty n_{g,d}^\mm x^g 
    \geq \sum_{\substack{\mm \text{ multset} \\ |\mm| = d}} \left( x(x+1)^d \right)^{V(\mm)} 
    \geq \sum_{V=1}^{\infty} \left( x (x+1)^d \right)^V.
  \end{align*}
  By the root test, the right-hand side converges if $\limsup_{V \to \infty} x(x+1)^d \leq 1$
  or $x < 1/\rr_d$. But then the left-hand side converges when $x < \left(\limsup_{g \to \infty} \left(n_{g,d}^\square\right)^{1/g}\right)^{-1}$, which forces the limit supremum to be at least $\mathsf{r}_d$.

(In the language of depth-2 regions, cf.~\S\ref{sec:bounds}, we are counting the number of depth-2 rectangular GNSs. However, we choose to work with partition labelings here to mirror the proof of the upper bound below.)

  \paragraph{Upper bound.} If $\Lambda^d$ has multset $\mm = \{m_1\mathbf{e_1^d}, \dots, m_d\mathbf{e_d^d}\}$, then $\mathscr{L}(\Lambda^d)$ consists of a $m_1 \times \cdots \times m_d$ prism whose entries are all labeled with nonzero $(d-1)$-dimensional partitions. In particular, we have that
  \[\sum_{g=1}^\infty n_{g,d}^{\mm}x^g \leq \left( \sum_{k=1}^\infty p_d(k)x^k \right)^{V(\mm)} = (P_d(x) - 1)^{V(\mm)}\]
  for any $x > 0$. By summing over all $\mm$, we get that
  \begin{align*}
    \sum_{g=1}^\infty n_{g,d}^\square x^g &= \sum_{\substack{\mm \text{ multset} \\ |\mm| = d}} \sum_{g=1}^\infty n_{g,d}^\mm x^g 
    \leq \sum_{\substack{\mm \text{ multset} \\ |\mm| = d}} \left( P_d(x) - 1 \right)^{V(\mm)}\
    \leq \sum_{V=1}^{\infty} \sigma_0(V)^d \left( P_d(x) - 1 \right)^V,
  \end{align*}
 since there are at most $\sigma_0(V)^d$ choices of $m_1, m_2, \dots, m_d$ which multiply to $V$. (Here, $\sigma_0(V)$ denotes the number of divisors of $V$.) By the root test, the right-hand side converges only if
  \[\limsup_{V \to \infty} \left( \sigma_0(V)^d \left( P_d(x) - 1 \right)^V \right)^{1/V} = \limsup_{V \to \infty} (P_d(x) - 1) \leq 1,\]
  which is not true if $P_d(x) > 2$. Then we finish in a similar fashion to the lower bound. 
\end{proof}

\section{Future directions}

\label{sec:future}

In this section, we discuss possible lines of future work by sharpening bounds on $n_{g,d}$ and better understanding partition labelings.

\subsection{Sharpening asymptotics}

In~\Cref{sec:bounds}, we show an exponential lower bound and a superexponential upper bound on $n_{g,d}$. It is natural to ask whether we can reconcile these bounds, since it is somewhat unclear whether $n_{g,d}$ grows exponentially or superexponentially.

\begin{question}
  Is the quantity $n_{g,d}^{1/g}$ bounded?
\end{question}

Part of the difficulty of this question is the disparity between the sizes of $s_1(\mm)$ and $V(\mm)$; the excluded elements of a numerical semigroup can be ``skinny'' along each of the coordinate axes, which yields a small shape size but a large volume for $d \geq 2$. For instance, this behavior is exhibited in the set $\mathcal{A}_g$ of possible points excluded from a genus $g$ GNS (cf.~\S\ref{sec:upper}), which has shape size $O(g (\ln g)^{d-1})$ but volume $O(g^d)$. If $n_{g,d}$ does grow exponentially, it is also natural to ask whether our lower bound of $\rr_{2^d}$ is sharp, and if so, whether the subexponential factor is $C_d^{g^{(d-1)/d}}$.

\begin{question}
  Does the limit $\lim_{g\to\infty} \rr_{2^d}^{-g^{1/d}}n_{g,d}^{g^{-(d-1)/d}}$ exist?
\end{question}

A key ingredient of Zhai's proof is the conjecture of Zhao that almost all numerical semigroups have small depth after ordering by genus~\cite[Conj.~2]{zhai_2012}. This is no longer true for the general case, at least in terms of exponential growth and for our definition of depth. Take the following example, which also exhibits the aforementioned ``axial skinniness.''

\begin{example}
  Let $\mm_k$, $s_{1,k,d}$, and $s_{2,k,d}$ be defined as in the proof of~\Cref{lem:lower-bound}. Suppose $\Lambda^d$ has genus $g$ and satisfies the following properties:
  \begin{itemize}
    \item we have $t \cdot \mathbf{e_1^d} \not\in \Lambda^2$ for every positive integer $t \leq (q-1)k + 1$ and $k \nmid t$;
    \item but it contains every other point in a depth-$k$ region for $k \geq 3$.
  \end{itemize}

  One can check that $\Lambda^d$ must be a GNS as follows. Suppose $\bs{a}, \bs{b} \in \Lambda^d$ are nonzero.
  \begin{itemize}
    \item \emph{Case 1}: Both $\bs{a}$ and $\bs{b}$ are multiples of $\mathbf{e_1^d}$. Note that $S = \NN_0 \setminus \{t \in \NN_0 : t \leq (q-1)k + 1, \ k \nmid t\}$ is a numerical semigroup. Moreover, we have $s \cdot \mathbf{e_1^d} \in \Lambda^d$ if and only if $s \in S$. Hence, since $a_1, b_1 \in S$, we have $a_1 + b_1 \in S$, ergo $\bs{a} + \bs{b} \in \Lambda^d$.
    \item \emph{Case 2}: Either $\bs{a}$ or $\bs{b}$ are not multiples of $\mathbf{e_1^d}$. Then $\bs{a} + \bs{b}$ is in a region of at least $3$ by~\Cref{prop:dumb-stuff} and also is not a multiple of $\mathbf{e_1^d}$. This guarantees $\bs{a} + \bs{b} \in \Lambda^d$ by construction.
  \end{itemize}

  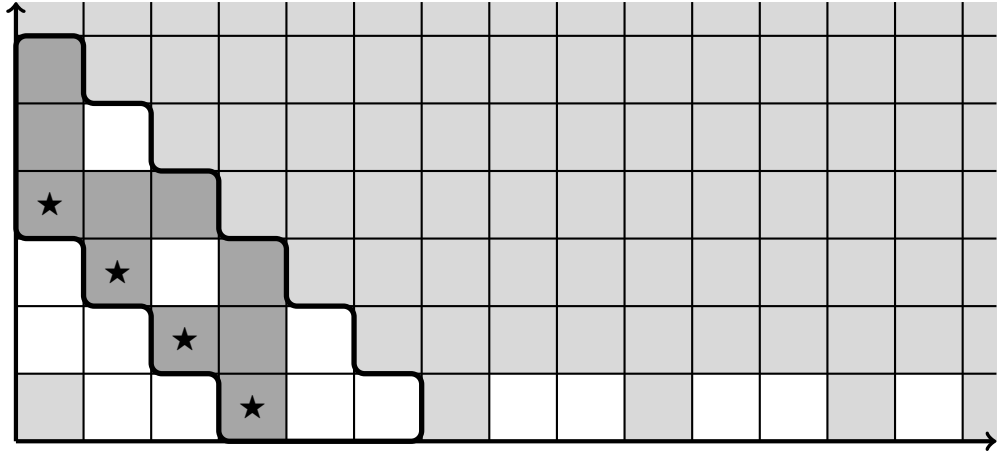
\begin{figure}
    \begin{center}
  \begin{tikzpicture}[scale=0.899]
    \fill[fill=gray!30!white] (0,0)--(14.5,0)--(14.5,6.5)--(0,6.5)--cycle;
    \foreach \xx/\yy in {0/3,0/4,0/5,1/2,1/3,1/4,2/2,2/3,2/1,3/1,3/2,3/0,4/0,4/1,5/0} {\fill[gray!70!white] (\xx,\yy)--(\xx+1,\yy)--(\xx+1,\yy+1)--(\xx,\yy+1)--cycle;}
    \foreach \xx/\yy in {1/0, 2/0, 0/1, 1/1, 0/2, 4/0,5/0,7/0,8/0,10/0,11/0,13/0,2/2,1/4,4/1} {\fill[white] (\xx,\yy)--(\xx+1,\yy)--(\xx+1,\yy+1)--(\xx,\yy+1)--cycle;}
    \foreach \ii in {1,...,14} { \draw[thick] (\ii,0)--(\ii,6.5); }
    \foreach \ii in {1,...,6} { \draw[thick] (0,\ii)--(14.5,\ii); }
    \node at (3.5,0.5) {$\bigstar$};
    \node at (2.5,1.5) {$\bigstar$};
    \node at (1.5,2.5) {$\bigstar$};
    \node at (0.5,3.5) {$\bigstar$};
    \draw[->, ultra thick] (0,0) -- (14.5,0);
    \draw[->, ultra thick] (0,0) -- (0,6.5);
    \draw[line width = 2pt, rounded corners] (0,3)--(0,6)--(1,6)--(1,5)--(2,5)--(2,4)--(3,4)--(3,3)--(4,3)--(4,2)--(5,2)--(5,1)--(6,1)--(6,0)--(3,0)--(3,1)--(2,1)--(2,2)--(1,2)--(1,3)--cycle;
  \end{tikzpicture}
\end{center}
\caption{A GNS with multset $\mm_3$ and depth $5$, with $\mathcal{R}_2(\mm_3)$ outlined.}
\label{fig:counter}
\end{figure}

  Then there are $g-s_{1,k,d}-(q-2)(k-1)$ more points to exclude from $\mathcal{R}_2(\mm_k)$, which has $s_{2,k,d}-k+1$ remaining elements.
Hence, there are $\binom{s_{2,k,d}-k+1}{g-s_{1,k,d}-(q-2)(k-1)}$ GNSs that satisfy the given requirements. By holding $q$ constant and letting $g,k$ grow, this quantity has exponential growth factor $\rr_{2^d}$. However, all of these GNSs have depth $q$. For an example of a GNS with $k=3$ and $q=5$, see~\Cref{fig:counter}. 
\end{example}

However, the data from small cases suggests that most GNSs are depth $2$ or $3$, akin to the one-dimensional case. A natural guess for $d \geq 2$ would be that all $n_{g,d,q}$ have exponential growth rate $\rr_{2^d}$, but the subexponential growth factor $\rr_{2^d}^{-g^{1/d}} n_{g,d,q}^{g^{-(d-1)/d}}$ is largest for $q=3$. In this paper, we describe a large class of depth $2$ GNSs; it would be interesting to examine large classes of depth $3$ GNSs, analogous to the resuls of Zhao for $d=1$~\cite{zhao_2011}. 

\begin{question}
  Do $d$-dimensional GNSs almost all have depth $3$ for $d \geq 2$?
\end{question}

We have explicitly calculated $n_{g,2,q}$ for the small cases of $g \leq 14$ and $q \leq 5$, shown in~\Cref{tab:ng2q}, whose numerics support our conjecture. It is not difficult to show that $n_{g,d,1} = p_d(g+1)$ and $n_{g,d,g} = d^2$ for $g \geq 2$. However, the columns of the table are not yet on the OEIS, so it would be interesting to see if these sequences have other combinatorial significance.

\begin{table}[]

\small

\begin{tabular}{|c|rrrrr|}
\hline
$g$ & $n_{g,2,1}$ & $n_{g,2,2}$ & $n_{g,2,3}$ & $n_{g,2,4}$ & $n_{g,2,5}$ \\ \hline
1   & 2           & 0           & 0           & 0           & 0           \\
2   & 3           & 4           & 0           & 0           & 0           \\
3   & 5           & 14          & 4           & 0           & 0           \\
4   & 7           & 48          & 12          & 4           & 0           \\
5   & 11          & 143         & 44          & 8           & 4           \\
6   & 15          & 412         & 163         & 36          & 8           \\
7   & 22          & 1176        & 550         & 106         & 28          \\
8   & 30          & 3332        & 1751        & 333         & 86          \\
9   & 42          & 9287        & 5514        & 1009        & 254         \\
10  & 56          & 25630       & 17080       & 3065        & 737         \\
11  & 77          & 70574       & 52028       & 9128        & 2133        \\
12  & 101         & 194290      & 156358      & 26985       & 6053        \\
13  & 135         & 534127      & 465726      & 78983       & 16992       \\
14  & 176         & 1465245     & 1377185     & 228727      & 47225       \\ \hline
\end{tabular}

\medskip

\caption{Values of $n_{g,2,q}$ for $g \leq 14$ and $q \leq 5$.}
\label{tab:ng2q}
\end{table}

\Cref{thm:ngd2} implies that the quantity $\rr_{2^d}^{-g^{1/d}} n_{g,d,2}^{g^{-(d-1)/d}}$ for depth $2$ GNSs is bounded. By implementing~\Cref{prop:ngd2-form}, we have calculated the values of $n_{g,2,2}$ for $n \leq 87$, shown in \Cref{tab:ng22}. The values of $\rr_4^{-\sqrt{g}} n_{g,2,2}^{1/\sqrt{g}}$ are graphed against $1/g$ in \Cref{fig:plot}, which suggests that this quantity converges to a constant near $1.2$. 

\begin{table}[]
  \footnotesize
\begin{tabular}{|rr|rr|rr|}
\hline
$g$ & $n_{g,2,2}$   & $g$ & $n_{g,2,2}$                & $g$ & $n_{g,2,2}$                            \\ \hline
0   & 0             & 30  & 12459909670309             & 60  & 80985430675574711412980916             \\
1   & 0             & 31  & 33519288444409             & 61  & 215509768877495232586787465            \\
2   & 4             & 32  & 90136456224494             & 62  & 573383202769145098057975309            \\
3   & 14            & 33  & 242283690207403            & 63  & 1525226173996843571825323845           \\
4   & 48            & 34  & 650936600796631            & 64  & 4056291288735430727151164447           \\
5   & 143           & 35  & 1747891377256538           & 65  & 10785145844024419432004114254          \\
6   & 412           & 36  & 4690642296534889           & 66  & 28669745119349640035022238173          \\
7   & 1176          & 37  & 12580211126984860          & 67  & 76194552501074658365836459077          \\
8   & 3332          & 38  & 33720107313956188          & 68  & 202455550832885616509159776241         \\
9   & 9287          & 39  & 90333780254836434          & 69  & 537831233961624997213173542362         \\
10  & 25630         & 40  & 241874514915972126         & 70  & 1428495078136679841557819365161        \\
11  & 70574         & 41  & 647335685418582083         & 71  & 3793447898643022179662596244366        \\
12  & 194290        & 42  & 1731773886602728051        & 72  & 10072060467737818893614010324770       \\
13  & 534127        & 43  & 4631250509157734047        & 73  & 26738498106822231994902593485746       \\
14  & 1465245       & 44  & 12381460478034483318       & 74  & 70973288195363677225963531535048       \\
15  & 4011126       & 45  & 33092335174560159808       & 75  & 188363010271347363103428460974784      \\
16  & 10961060      & 46  & 88424351052896671941       & 76  & 499851837500292856875731277058977      \\
17  & 29903045      & 47  & 236212572399447537141      & 77  & 1326272602033306568840724593782556     \\
18  & 81429566      & 48  & 630827866930313644489      & 78  & 3518599430142665518024919482166660     \\
19  & 221325445     & 49  & 1684152607151129735036     & 79  & 9333642921927341197807452053383505     \\
20  & 600659520     & 50  & 4494703368297811355435     & 80  & 24755693987767914166837735101399289    \\
21  & 1628709545    & 51  & 11991135688827147388952    & 81  & 65650741449233606049989435056291703    \\
22  & 4414300344    & 52  & 31978416951800296071831    & 82  & 174077611139574752854463708279935997   \\
23  & 11958683448   & 53  & 85250406896754816152086    & 83  & 461512525629540684214148624663888012   \\
24  & 32372736224   & 54  & 227191018857947112334513   & 84  & 1223377600065175892800725928261667064  \\
25  & 87541376014   & 55  & 605282191834901220600054   & 85  & 3242455319972149681281785135048236895  \\
26  & 236440731005  & 56  & 1612185156193460856587117  & 86  & 8592605228187134388298469836076911868  \\
27  & 637862590414  & 57  & 4293176639427000769790008  & 87  & 22767484181294798508811998075481662904 \\
28  & 1719101643609 & 58  & 11430408760122793960003154 &     &                                        \\
29  & 4629525846179 & 59  & 30427812808611490639896278 &     &                                        \\ \hline
\end{tabular}

\bigskip

\caption{Values of $n_{g,2,2}$ for $g \leq 87$.}
\label{tab:ng22}
\end{table}

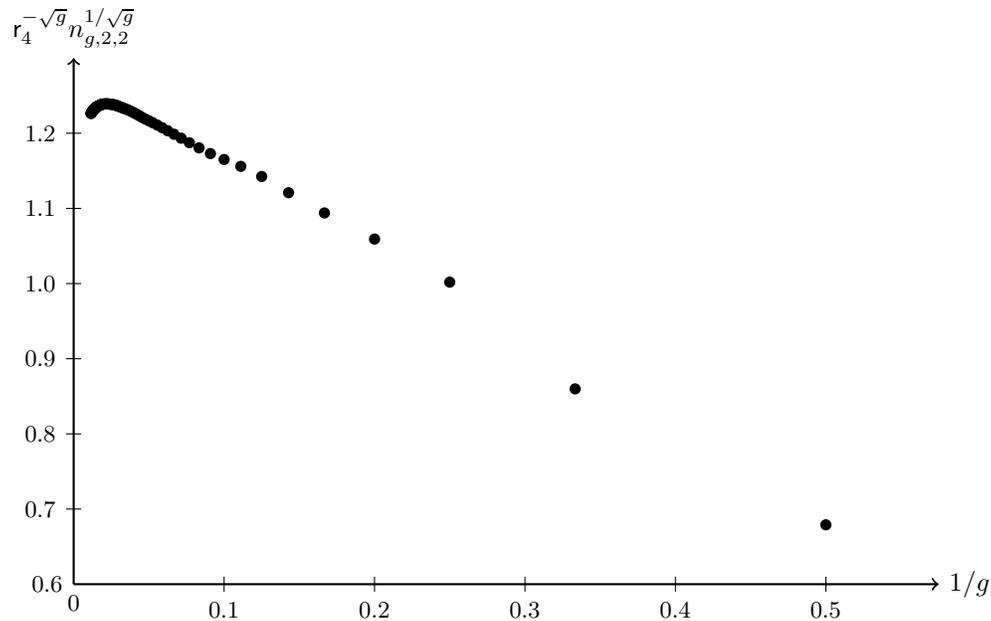
\begin{figure}
  \begin{tikzpicture}[scale=1]
    \foreach \xx/\yy in {0.5/0.679036986368646,0.333333333333333/0.859865301281436,0.25/1.00189460212268,0.2/1.05918597264619,0.166666666666667/1.09392692540852,0.142857142857143/1.12087139436439,0.125/1.14237778747078,0.111111111111111/1.15591866277417,0.1/1.16501247372463,0.0909090909090909/1.17293243248027,0.0833333333333333/1.18051180125345,0.0769230769230769/1.18739262959151,0.0714285714285714/1.19340965399733,0.0666666666666667/1.1986518708796,0.0625/1.20327799595907,0.0588235294117647/1.20736388754395,0.0555555555555556/1.21089757904116,0.0526315789473684/1.21390000915701,0.05/1.21650601519924,0.0476190476190476/1.2189020278834,0.0454545454545455/1.2212067886445,0.0434782608695652/1.22342115746119,0.0416666666666667/1.22547195153905,0.04/1.22728724015724,0.0384615384615385/1.22884124594856,0.037037037037037/1.23015568458936,0.0357142857142857/1.23127894528227,0.0344827586206897/1.23226525080904,0.0333333333333333/1.23316115109611,0.032258064516129/1.23399769289593,0.03125/1.23478715078122,0.0303030303030303/1.23552533031992,0.0294117647058824/1.23619892142693,0.0285714285714286/1.23679431457287,0.0277777777777778/1.23730368631705,0.027027027027027/1.23772658170715,0.0263157894736842/1.23806818756448,0.0256410256410256/1.23833673035961,0.025/1.23854171497365,0.024390243902439/1.23869326976046,0.0238095238095238/1.23880190411737,0.0232558139534884/1.23887792370092,0.0227272727272727/1.23893027111899,0.0222222222222222/1.2389651347124,0.0217391304347826/1.23898493514466,0.0212765957446809/1.23898818361376,0.0208333333333333/1.23897034337429,0.0204081632653061/1.23892543570899,0.02/1.23884789183555,0.0196078431372549/1.23873413259891,0.0192307692307692/1.23858351344552,0.0188679245283019/1.23839849904418,0.0185185185185185/1.2381841361635,0.0181818181818182/1.23794703013373,0.0178571428571429/1.23769409759046,0.0175438596491228/1.2374313788293,0.0172413793103448/1.23716315386224,0.0169491525423729/1.23689152004548,0.0166666666666667/1.2366164686817,0.0163934426229508/1.23633637314798,0.0161290322580645/1.23604871017082,0.0158730158730159/1.23575080695297,0.015625/1.23544044312156,0.0153846153846154/1.23511621539457,0.0151515151515152/1.23477765892974,0.0149253731343284/1.23442518141758,0.0147058823529412/1.23405988973218,0.0144927536231884/1.23368337882914,0.0142857142857143/1.23329752507943,0.0140845070422535/1.23290429943621,0.0138888888888889/1.23250560106196,0.0136986301369863/1.23210311084491,0.0135135135135135/1.23169817070712,0.0133333333333333/1.23129170056252,0.0131578947368421/1.23088416457993,0.012987012987013/1.2304755912924,0.0128205128205128/1.23006564153533,0.0126582278481013/1.22965370922944,0.0125/1.22923903636144,0.0123456790123457/1.22882082585137,0.0121951219512195/1.2283983421331,0.0120481927710843/1.2279709956455,0.0119047619047619/1.22753841105139,0.0117647058823529/1.22710047883588,0.0116279069767442/1.22665738713994,0.0114942528735632/1.226209627779} {
    \node[vtx] at (\xx*20,\yy*10) {};
    }
    \draw[->, thick] (0,6) node[below] {\footnotesize $0$}--(11.5,6) node[right] {\small $1/g$};
    \draw[->, thick] (0,6) node[left] {\footnotesize $0.6$}--(0,13) node[above] {\small $\rr_4^{-\sqrt{g}}n_{g,2,2}^{1/\sqrt{g}}$};
    \draw (-0.1,7) node[left] {\footnotesize$0.7$}--(0.1,7);
    \draw (-0.1,8) node[left] {\footnotesize$0.8$}--(0.1,8);
    \draw (-0.1,9) node[left] {\footnotesize$0.9$} --(0.1,9);
    \draw (-0.1,10) node[left] {\footnotesize$1.0$}--(0.1,10);
    \draw (-0.1,11) node[left] {\footnotesize$1.1$}--(0.1,11);
    \draw (-0.1,12) node[left] {\footnotesize$1.2$}--(0.1,12);
    \foreach \xx in {1,...,5} {
    \draw (2*\xx,6.1)--(2*\xx,5.9) node[below] {\footnotesize 0.\xx};
    }
  \end{tikzpicture}

  \caption{Plot of $\rr_4^{-\sqrt{g}}n_{g,2,2}^{1/\sqrt{g}}$ against $1/g$ for $2 \leq g \leq 87$.}

  \label{fig:plot}
\end{figure}

\subsection{Partition labelings}

In~\Cref{sec:part}, we generalize the notion of Kunz words to partition labelings. In the one-dimensional setting, Kunz words allow us to reinterpret the enumeration of numerical semigroups as a polytopal~\cite{kaplan_2021} and additive-combinatorial~\cite{bacher_2019} problem, which allow us to use tools such as Ehrhart theory~\cite{kaplan_2011} and graph homomorphisms~\cite{li_2022, zhu_2022} to count semigroups.

Thus, it is natural to ask whether these methods can be extended to partition labelings.

\begin{question}
  Can we interpret partition labelings in a polytopal or additive-combinatorial setting? 
\end{question}

\bibliographystyle{general}
\bibliography{general}

\end{document}